\newcommand{\st}{\mathrm{s.t.}}
\newcommand{\ONE}{\mathbf{1}}
\newcommand{\Diff}{\mathrm{d}}
\newcommand{\Real}{\mathbb{R}}
\newcommand{\Ze}{\mathbb{Z}}
\newcommand{\conv}{\mathrm{conv}}
\newcommand{\tr}{^{\intercal}}
\newtheorem{assumption}{Assumption}
\newtheorem{notation}{Notation}
\newtheorem{example}{Example}
\newtheorem{definition}{Definition}
\newtheorem{remark}{Remark}
\newtheorem{proposition}{Proposition}
\newtheorem{lemma}{Lemma}
\newtheorem{claim}{Claim}
\def\i0{i_0}
\begin{document}

\title{Lifting convex inequalities for bipartite bilinear programs}



\author{Xiaoyi Gu\footnote{xiaoyigu@gatech.edu, Georgia Institute of Technology}
\and
Santanu S. Dey\footnote{santanu.dey@isye.gatech.edu, Georgia Institute of Technology}
\and
Jean-Philippe P. Richard\footnote{jrichar@umn.edu, University of Minnesota, Minneapolis}
}



\maketitle

\begin{abstract}
The goal of this  paper is to derive new classes of valid convex inequalities for quadratically constrained quadratic programs (QCQPs) through the technique of lifting. 
Our first main result shows that, for sets described by one bipartite bilinear constraint together with bounds, it is always possible to sequentially lift a seed inequality that is valid for a restriction obtained by fixing variables to their bounds, when the lifting is accomplished using affine functions of the fixed variables. In this setting, sequential lifting involves solving a non-convex nonlinear optimization problem each time a variable is lifted, just as in Mixed Integer Linear Programming. To reduce the computational burden associated with this procedure,  we develop a  framework based on subadditive approximations of lifting functions that permits sequence-independent lifting of seed inequalities for separable bipartite bilinear sets.  In particular, this framework permits the derivation of closed-form valid inequalities. We then study a separable bipartite bilinear set where the coefficients form a minimal cover with respect to the right-hand-side. 
For this set, we introduce a \textit{bilinear cover inequality}, which is second-order cone representable. We argue that this bilinear cover inequality is strong by showing that it yields a constant-factor approximation of the convex hull of the original set. 
We study its lifting function and construct a two-slope subadditive upper bound. 
Using this subadditive approximation, we lift fixed variable pairs in closed-form, thus deriving a \textit{lifted bilinear cover inequality} that is valid for general separable bipartite bilinear sets with box constraints.

\end{abstract}


\section{Introduction}

\subsection{Generating strong cutting planes through lifting}

\textit{Lifting} is a technique that is used to derive or strengthen classes of cutting planes. 
It was first introduced to optimization in the context of mixed integer linear programming (MILP); see~\cite{richard2010lifting} for a review. 
The lifting process has two steps:
\begin{itemize}
\item \textit{Fixing} and generation of a \textit{seed inequality}: 
In the first step, the set $S$ of interest is restricted by fixing a subset of variables, say $x^F$, to specific values (typically to one of their bounds), say $\tilde{x}^F$. 
A valid inequality $h(x) \geq h_0$, which we call \textit{seed inequality}, is then generated for the restriction $S|_{x^F = \tilde{x}^F}$.  
\item \textit{Lifting} the seed inequality: 
The seed inequality $h(x) \geq h_0$, when viewed with ``zero coefficients" for the fixed variables $h(x) + 0\cdot x^F\geq h_0$ is typically not valid for the original set $S$. 
The task in the lifting step is to generate an inequality $h(x) + g(x^F) \geq h_0 + g_0$, which (i) is valid for $S$ and (ii) satisfies $g(\tilde{x}^F)  = g_0$.
Under condition (ii), inequality $h(x) + g(x^F) \geq h_0 + g_0$ reduces to inequality $h(x) \geq h_0$ when $x^F$ is set to $\tilde{x}^F$. 
The process of lifting is often accomplished by rotating or titling the seed inequality~\cite{espinoza2010lifting}.
\end{itemize}
Though condition (ii) is not strictly necessary to impose, we require it in the remainder of the paper as otherwise $h(x) + g(x^F) \geq h_0 + g_0$ is weak on the face $x^F = \tilde{x}^F$.

Lifting, as a technique for generating cutting-planes in MILP, has been extensively researched.
Originally devised for node packing and knapsack sets ~\cite{padberg1973facial,padberg1975note,balas1975facets,hammer1975facet,balas1978facets}, lifting was extended to general settings \cite{wolsey1976facets,wolsey1977valid,gu1999lifted,gu2000sequence,richard2003liftedbta,richard2003liftedsl,atamturk2004sequence}
and used to derive families of valid inequalities for many sets including
\cite{ceria1998cutting,martin1998intersection,atamturk2003facets,agra2007lifting,kaparis2008local,dey2009linear,zeng2007framework,zeng2011polyhedral,zeng2011polyhedrala,gomez2018submodularity}.   
Many of the classes of cutting planes that have 
yielded
significant computational gains can be obtained through lifting.
This includes \textit{lifted cover inequalities}~\cite{gu1999lifted}, lifted tableaux cuts~\cite{dey2009linear,narisetty2011lifted}, and even the \textit{Gomory mixed integer cut}~\cite{dey2010two}; see \cite{balas1980strengthening,richard2009valid,dey2010constrained,conforti2011geometric,basu2012unique,basu2013unique,basu2019nonunique,averkov2015lifting,basu2015operations,dey2010composite,basu2019nonunique}
for papers related to lifting in the infinite group problem model.
Similarly, \textit{mixing inequalities}~\cite{gunluk2001mixing} can be viewed as an outcome of lifting~\cite{dey2010composite}. 

Significantly fewer articles have focused on studying how lifting can be applied to nonlinear programs and mixed integer nonlinear programs.
Exceptions include~\cite{richard2010liftingframework}, which develops a general theory for lifting linear inequalities in nonlinear programming, \cite{nguyen2018deriving} which applies lifting to derive the convex hull of a nonlinear set, \cite{gupte2012mixed} which studies lifting for the pooling problem, \cite{atamturk2011lifting} which uses lifting for conic integer programs, and \cite{chung2014lifted} which develops strong inequalities for mixed integer bilinear programs.  

\subsection{Goal of this paper}
The goal of this  paper is to derive  new  classes  of  valid  convex  inequalities  for \textit{quadratically constrained quadratic programs} (QCQPs) through the technique of lifting.

Generating valid inequalities for single row relaxations (together with bounds and integrality restrictions), \textit{i.e.}, for knapsack constraints, was the first, and arguably the most important step in the development of computationally useful cutting-planes in MILP. 
Motivated by this observation, various cutting-planes and convexification techniques for sets defined by a single non-convex quadratic constraint together with bounds have recently been investigated;  see ~\cite{chung2014lifted,tawarmalani2010strong,anstreicher2020convex} for classes of valid inequalities for single constraint QCQPs and~\cite{dey2019new,santana2020convex} for convex hull results for such sets. 
The paper~\cite{rahman2019facets} studies a set similar to the one we study, albeit with integer variables. Further, \cite{dey2019new} demonstrates that cuts obtained from one-row relaxations of QCQPs can be useful computationally. 
The paradigm of intersection cuts has also been explored to generate cuts for single-constraint QCQPs~\cite{munoz2020maximal,bienstock2020outer}. 
Due to lack of space, we refrain from describing here the vast literature on convexification techniques for QCQPs  and instead refer interested readers to~\cite{burer2015gentle,santana2020convex} and the references therein.

In this paper, we investigate the lifting of a convex seed inequality for a feasible region defined by a single (non-convex) quadratic constraint together with bound constraints. 
Apart from \cite{atamturk2011lifting}, we are not aware of any paper that attempts to study or employ lifting of convex nonlinear inequalities. 
To the best of our knowledge, this is the first study that derives lifted valid inequalities for general non-convex quadratic constraints with arbitrary number of variables.  
An extended abstract of this paper was accepted for publication in IPCO 2021~\cite{GuIPCO2021}.
\subsection{Main contributions}
\begin{itemize}
\item \textit{Can we always lift?} 
    We present an example in two variables that illustrates that, even when a set is defined by a convex quadratic constraint, 
    it might not always be possible to lift a linear seed inequality, valid  for the restriction obtained by fixing a variable at lower bound, when we assume $g(\cdot) - g_0$ is an affine function of the fixed variable.
    Our main result, by contrast, establishes that there exists a large class of sets, described by a single \textit{bipartite bilinear} constraint~\cite{dey2019new} together with bounds, for which it is always possible to lift when variables are fixed at their bounds.
    Note that any quadratic constraint can be relaxed to a bipartite bilinear constraint.
    \item \textit{Sequence-independent lifting.} 
    The lifting of a fixed variable requires the solution of a non-convex nonlinear optimization problem. 
    When multiple variables must be lifted one at a time, this process (referred to as \textit{sequential lifting}) can be computationally
    prohibitive.
    Further, the form of the lifted inequality obtained will differ depending on the order in which variables are lifted.  
    For MILPs, it was shown in \cite{wolsey1977valid} that when the so-called \textit{lifting function}  is subadditive, lifting is far more computationally tractable in part because the form of the lifted inequality is independent of the order in which variables are lifted. 
    We develop a similar general result for sequence-independent lifting of seed inequalities for \textit{separable bipartite bilinear} 
    constraints.
    \item \textit{Bilinear covering set and bilinear cover inequality.} 
    We next study a \textit{separable bipartite bilinear} set whose coefficients form a minimal cover with respect to the right-hand-side.
    For this set, we derive a \textit{bilinear cover inequality}. 
    This \textit{second-order cone representable valid inequality} yields a constant-factor approximation of the convex hull of the original set. 
    \item \textit{Sequence-independent lifting of  bilinear cover inequality.} 
    We construct a \textit{two-slope} subadditive upper bound of the lifting function corresponding to the bilinear cover inequality. 
    This function is reminiscent of the two-slope subadditive functions studied in the context of cutting-planes for the infinite group relaxation~\cite{gomory1972some,richard2010group,koppe2015electronic}, although there is no apparent connection.
    Using this subadditive function, we lift fixed variable pairs in closed-form, thus describing a family of \textit{lifted bilinear cover inequalities}, which are valid for general separable bipartite bilinear 
    constraints. 
\end{itemize}

\paragraph{Notation and organization of the paper}
Given a positive integer $n$, we denote the set $\{1, \dots, n\}$ by $[n]$. 
Given a set $S \subseteq \Real^n$ and $\theta > 0$, we use $\theta\cdot S$ to denote the set $\{\theta x\, |\, x \in S\}$. 
We also use $\textup{conv}(S)$ to denote the convex hull of set $S$. The rest of the paper is organized as follows.
In Section~\ref{sec:main} we present our main results. 
In Section~\ref{sec:future} we discuss some key directions for future research. 
Sections~\ref{section:existence}-\ref{section:lifted} give the proofs of the results described in Section~\ref{sec:main}.

\section{Main results}\label{sec:main}


Before we discuss our results, we first present two examples that illustrate how lifting can be performed for a set defined by a quadratic constraint and what challenges can arise during such procedure. 

\begin{example}\label{example:one}
Consider the set
$S:= \{\ (x_1, x_2, x_3) \in [0,1]^3 \ | \ x_1x_2 + 2x_1x_3 \geq  1\  \}.$
First, we fix $x_3 = 0$ to obtain the restriction  $S|_{x_3 = 0}: =\{ (x_1, x_2) \in [0,1]^2 \, |\, x_1x_2 \geq 1\}$. The seed inequality
$\sqrt{x_1x_2} \geq 1,$
is a valid convex inequality for $S|_{x_3 = 0}$.
We next show how it can be lifted into a valid inequality for $S$. Observe that, although valid for $S|_{x_3 = 0}$, the seed inequality is not valid for $S$, since $(x_1, x_2, x_3) = (1,0,\sfrac{1}{2})$ violates it while belonging to $S$. 
We therefore must introduce variable $x_3$ into the seed inequality so as to make it valid. 
In particular we seek 
$\alpha \in \Real$ for which
\begin{eqnarray}
\sqrt{x_1x_2} + \alpha x_3 \geq 1, \label{ex:liftedsimple}
\end{eqnarray}
is valid for $S$. 
This question can be answered by solving the problem
\begin{equation}\label{example:lifting}
\begin{split}
\alpha^*:= \textup{sup}\ & \frac{ 1 - \sqrt{x_1x_2}}{x_3} \\
\textup{s.t.}\ & x_1x_2 + 2x_1x_3 \geq  1 ,\
x_3 \in (0, 1],\ (x_1, x_2) \in [0, 1]^2,            
\end{split}
\end{equation}
where a key challenge is to first ascertain that the supremum is finite.
When $\alpha^*$ is finite, it is clear that choosing any $\alpha \ge \alpha^*$ in (\ref{ex:liftedsimple}) yields a valid inequality for $S$.
Problem (\ref{example:lifting}) can be analyzed using the following facts: 
(1) for any fixed value of $x_3$, we can always assume that an extreme point is the optimal solution, as the objective is to maximize a convex function, and
(2) the extreme points of the set where $x_3$ is fixed to a value within its bounds are well-understood~\cite{santana2020convex}. 
This suggests that one can inspect all different values of $x_3$ to establish that the supremum is finite. 
We illustrate these calculations next.
    
Observe that $\alpha^*$ can be obtained by computing the supremum $\alpha_1^*$ of (\ref{example:lifting}) for $x_3 \in [\sfrac{1}{2},1]$ and then computing the supremum $\alpha_2^*$ of (\ref{example:lifting}) for $x_3 \in (0,\sfrac{1}{2}]$. 
When $x_3 \in [\sfrac{1}{2},1]$, one optimal solution is $x_1 = \frac{1}{2x_3}$ and $x_2 = 0$, thus $\alpha^*_1 = \textup{sup}_{x_3 \in [\sfrac{1}{2}, 1] }\frac{1}{x_3} = 2.$
When $x_3 \in (0,\sfrac{1}{2}]$, one optimal solution is $x_1 = 1$ and $x_2 = 1 - 2x_3$, thus 
$$
\alpha^*_2 = \sup_{x_3 \in (0, \sfrac{1}{2}] }\frac{1 - \sqrt{1 - 2x_3}}{x_3}  
= \sup_{x_3 \in (0, \sfrac{1}{2}] }\frac{2}{1 + \sqrt{1 - 2x_3}} = 2.
$$
Choosing any $\alpha \ge \alpha^*=\max\{\alpha_1^*,\alpha_2^*\}=2$ yields a valid inequality for $S$. 
The strongest such valid inequality is 
$\sqrt{x_1x_2} + 2 x_3 \geq 1.$
\end{example}
Example~\ref{example:one} might suggest that lifting can always be performed when seeking to derive a linear valid inequality. 
Example~\ref{example:nolifting} shows that it is not so. 
\begin{example}\label{example:nolifting}
Consider the set $
S=\left\{ \ (x_1,x_2)\in[0,1]^2 \ \bigm| \ -x_1^2 - (x_2-0.5)^2\geq -0.5^2 \ \right\}.$

The inequality $-x_1 \geq 0$ is valid for the set $S|_{x_2=0}$ obtained from $S$ by fixing $x_2=0$. 
By
setting up an optimization problem similar to (\ref{example:lifting}), 
it is 
easy to verify that there is no $\alpha\in \Real$ for which $-x_1+\alpha x_2\geq 0$ is valid for $S$. 
\end{example}
In Example~\ref{example:nolifting}, ($i$) set $S$ is convex, ($ii$) we are trying to lift a linear inequality, and ($iii$) $x_2$ is fixed to a bound. 
Even then, it is not possible to lift the seed inequality when we insist that lifting should be accomplished using an affine function of the fixed variable; see Example~\ref{ex:twocont} in Section~\ref{sec:future} for further discussion.

\subsection{Sufficient conditions under which seed inequalities can be lifted}

In Theorem~\ref{thm:existence}, we identify a large class of single row QCQPs where lifting can be accomplished using affine functions of the fixed variables. 


\begin{definition} \label{defn:bbs}
A set $Q$ is a \textit{bipartite bilinear set}\footnote{We use the term bipartite, perhaps redundantly, to highlight that variables can be divided into two groups, such that any degree two term comes from product of variables one each from these two groups~\cite{dey2019new}.}  if it is of the form
$$
S = \left\{ \ (x,y) \in [0,1]^m \times [0,1]^n \ \Bigm| \ x\tr Q y + a \tr x + b\tr y \geq c \ \right\},
$$
where $Q\in\Real^{m\times n}$, $a\in \Real^m$, $b\in \Real^n$, and $c\in\Real$.
\end{definition}

\begin{restatable}{theorem}{ThmExistence} \label{thm:existence}

Let $S$ be a bipartite bilinear set. Given $C\times D\subset [m]\times[n]$ and
{$\tilde{x}_i,\tilde{y}_j\in\{0,1\}$} for $i\in [m]\backslash C$, $j\in [n]\backslash D$, 
assume that inequality 
$
h(x_C, y_D) \geq r
$
is valid for $\{ (x,y)\in S \ |\ x_{[m]\backslash C}=\tilde{x}_{[m]\backslash C}, \ y_{[n]\backslash D}=\tilde{y}_{[n]\backslash D}  \}\neq\emptyset$, where $h$ is a concave function defined on {$[0,1]^{|C|+|D|}$}. 
Then, for any $k\in [m]\backslash C$, there exists a finite $f_k\in(-\infty,\infty)$ for which
$h(x_C, y_D) + f_kx_k \geq r + f_k\tilde{x}_k$
is valid for {$\{ (x,y)\in S \ |\ x_{([m]\backslash C)\backslash\{k\}}=\tilde{x}_{([m]\backslash C)\backslash\{k\}}, \ y_{[n]\backslash D}=\tilde{y}_{[n]\backslash D} \}$}.
\label{THM_EXISTENCE}
\end{restatable}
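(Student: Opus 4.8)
The plan is to recast the lifting of $x_k$ as a single univariate supremum, to reduce the finiteness of that supremum to a controlled‑deformation statement about one‑constraint bipartite bilinear sets, and to close the argument with the concavity of $h$.

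\emph{Normalization and reduction.} If $\tilde x_k=1$, the affine substitution $x_k\mapsto 1-x_k$ maps $S$ to another bipartite bilinear set over the same box, sends the face $\{x_k=1\}$ to $\{x_k=0\}$, and turns the desired conclusion into one of identical form; so I may assume $\tilde x_k=0$. Substituting the fixed values $\tilde x_i$ ($i\in([m]\setminus C)\setminus\{k\}$) and $\tilde y_j$ ($j\in[n]\setminus D$) into $x\tr Qy+a\tr x+b\tr y\ge c$ yields a single bipartite bilinear constraint in the free variables $(x_C,x_k,y_D)$ that can be written as $g(x_C,y_D)+x_k\,\psi(y_D)\ge 0$, where $g$ is bipartite bilinear on $[0,1]^{|C|}\times[0,1]^{|D|}$ and $\psi$ is affine, hence bounded (say $|\psi|\le P$), on $[0,1]^{|D|}$. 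The feasible points with $x_k=0$ already obey $h(x_C,y_D)\ge r$ by hypothesis, so requiring $h(x_C,y_D)+f_kx_k\ge r$ on the whole set is equivalent to $f_k\ge (r-h(x_C,y_D))/x_k$ at every feasible point with $x_k>0$. Thus it suffices to prove that
\begin{equation*}
f_k^\ast:=\sup\Bigl\{\tfrac{r-h(x_C,y_D)}{z}\ :\ z\in(0,1],\ (x_C,y_D)\in[0,1]^{|C|+|D|},\ g(x_C,y_D)+z\,\psi(y_D)\ge 0\Bigr\}
\end{equation*}
is finite (if this set is empty, any $f_k$ works); then $f_k:=\max\{f_k^\ast,0\}$ does the job.

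\emph{Reducing finiteness to the behaviour at $z\downarrow 0$.} Since $h$ is concave and finite on the box, it is bounded below there by $h_{\min}:=\min\{h(v):v\text{ a vertex of }[0,1]^{|C|+|D|}\}>-\infty$, and for each fixed $z$ the inner infimum of $h$ equals its infimum over the extreme points of the convex hull of the one‑constraint bipartite bilinear slice $\{(x_C,y_D)\in[0,1]^{|C|+|D|}:g(x_C,y_D)+z\,\psi(y_D)\ge 0\}$. By the characterization of extreme points of such sets~\cite{santana2020convex}, all but a bounded number of coordinates of such a point are at $\{0,1\}$ and the remaining fractional coordinates lie on the surface $g(x_C,y_D)+z\,\psi(y_D)=0$, whose defining equation has coefficients affine in $z$. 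Hence only finitely many combinatorial patterns occur, and for each pattern the optimal extreme point and the induced value of $(r-h(x_C,y_D))/z$ are explicit functions of $z$ on the sub‑range where the pattern is feasible; continuity and compactness bound each such function on every interval $[\varepsilon,1]$. The only way $f_k^\ast$ could be infinite is therefore through a pattern whose value of $(r-h(x_C,y_D))/z$ blows up as $z\downarrow 0$ — exactly the phenomenon of Example~\ref{example:nolifting}.

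\emph{The crux: bipartite bilinearity rules out blow‑up at $z\downarrow 0$.} Here the slack vanishes \emph{linearly}: any feasible $(x_C,y_D)$ at parameter $z$ satisfies $g(x_C,y_D)\ge -z\,\psi(y_D)\ge -zP$, and if $\psi(y_D)\le 0$ then $g(x_C,y_D)\ge 0$, so $(x_C,y_D)$ lies in the restriction and $h(x_C,y_D)\ge r$ with nothing to prove. In the remaining case I would write such a point as a convex combination $\lambda\,p+(1-\lambda)\,q$ with $p\in\conv\bigl(\{(x_C,y_D)\in[0,1]^{|C|+|D|}:g(x_C,y_D)\ge 0\}\bigr)$, $q$ in the box, and $1-\lambda\le Mz$ for a constant $M$ depending only on the data: using the bilinearity of $g$, rescaling the $x_C$‑ or $y_D$‑block leaves the bilinear term unchanged and adjusts the linear and constant parts in a controlled way, so that the perturbed point either lands within $O(z)$ of $\{g\ge 0\}$ or, near the degenerate parts of $\{g=0\}$, already lies in $\conv(\{g\ge 0\})$. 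Since $h$ is concave with $h\ge r$ on $\{g\ge 0\}\cap[0,1]^{|C|+|D|}$, it satisfies $h\ge r$ on the convex hull of that set, so $h(x_C,y_D)\ge\lambda r+(1-\lambda)h_{\min}\ge r-Mz\,(r-h_{\min})$ and hence $(r-h(x_C,y_D))/z\le M\,(r-h_{\min})$, a bound uniform in $z$. Combining this with the finitely many pattern estimates gives $f_k^\ast<\infty$. The step I expect to be hardest is the controlled convex‑combination estimate as $z\to 0$ — equivalently, quantifying how the one‑constraint bipartite bilinear slice deforms as its parameter moves — since this is precisely the feature that is present for bipartite bilinear constraints but absent for the convex quadratic of Example~\ref{example:nolifting}.
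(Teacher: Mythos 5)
Your overall architecture tracks the paper's: normalize to $\tilde{x}_k=0$, recast validity of the lifted inequality as finiteness of a one-parameter supremum over $z=x_k\in(0,1]$, dispose of $z\in[\varepsilon,1]$ by boundedness of $h$ on the box, and isolate the behaviour as $z\downarrow 0$ as the real difficulty. The gap is that your ``crux'' paragraph asserts, rather than proves, the statement that constitutes essentially the entire content of the theorem: that every point of the slice $\{(x_C,y_D)\in B: g(x_C,y_D)+z\,\psi(y_D)\ge 0\}$ can be written as $\lambda p+(1-\lambda)q$ with $p\in\conv(\{g\ge 0\}\cap B)$, $q\in B$, and $1-\lambda\le Mz$ for a constant $M$ uniform in $z$ and in the point. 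This is a quantitative Lipschitz-deformation (metric-regularity) property of bipartite bilinear slices, and it is exactly the feature that separates them from the convex quadratic of Example~\ref{example:nolifting}, where the slice moves at rate $\sqrt{z}$; it therefore cannot be waved through. The one concrete mechanism you offer --- that ``rescaling the $x_C$- or $y_D$-block leaves the bilinear term unchanged'' --- is false: scaling either block by $\mu$ scales $x_C\tr Q y_D$ by $\mu$. And the escape clause that near degenerate parts of $\{g=0\}$ the point ``already lies in $\conv(\{g\ge 0\})$'' is an unproven claim about precisely the tangential and degenerate configurations where linear-rate deformation is most in doubt.

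By contrast, the paper never attempts a pointwise decomposition. It bounds only the value function: after invoking the extreme-point characterization of bipartite bilinear sets from~\cite{dey2019new} to reduce to a problem with two free variables plus the lifting variable, it proves $v(\hat{x})-v(0)\le l\hat{x}+o(\hat{x})$ by enumerating which of the five constraints is active at a maximizer (the objective is convex, so some constraint is active) and, for the bilinear constraint, by an exhaustive case analysis on the position of the hyperbola $(x+b+\hat{q}\hat{x})(y+a)=c+ab+(\hat{q}a-\hat{a})\hat{x}$ relative to the box --- the degenerate case $c+ab=0$, which branch meets the box, which corners of the box lie inside, tangencies at edges --- each case yielding an explicit $O(\hat{x})$ displacement along a coordinate direction or along the curve, on which concavity of $h$ converts displacement into a linear bound on the drop of $h$. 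To salvage your route you would need to actually prove the uniform decomposition lemma for a single bipartite bilinear constraint, and the natural way to do so is the same two-variable reduction and hyperbola case analysis; as written, the proposal is a correct setup attached to an unproven key lemma.
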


\begin{remark}
The result of Theorem~\ref{THM_EXISTENCE} can be applied iteratively to all the fixed variables one at a time to obtain a valid inequality for $S$.
Theorem~\ref{THM_EXISTENCE} holds even when the bounds on variables are not $[0,1]$, since we can always rescale and translate variables.
\end{remark}

The proof of Theorem~\ref{THM_EXISTENCE} is presented in Section~\ref{section:existence} and uses calculations similar to those presented in Example~\ref{example:one}.
In particular, using a characterization of extreme points of the bipartite bilinear set $S$~\cite{dey2019new}, the proof reduces to establishing the result for three-variable problems where one of the variables is fixed. 
For a three-variable problem, a number of cases have to be analyzed to verify that the optimal  value of an optimization problem similar to (\ref{example:lifting}) is finite. 
The proof can be turned into an algorithm to compute the lifting coefficients, although not necessarily an efficient or practical one.

Theorem~\ref{THM_EXISTENCE} assumes that, when variables $x$ and $y$ are fixed, they are fixed at their bounds (either $0$ or $1$.) 
When this assumption is not imposed, we show next through an example that lifting may not be possible.
\begin{restatable}{example}{EgNonexists}
Consider the bipartite bilinear set $
S= \{ (x,y,\hat{x})\in[0,1]^3 | \left(x-\sfrac{1}{4}\right)\left(y-\sfrac{1}{2}\right)$ $\geq \sfrac{\hat{x}}{4}+\sfrac{1}{8} \}.$ First, we argue that the seed inequality $x \ge \sfrac{3}{4}$ is valid for the restriction of $S$ where $\hat{x}=\sfrac{1}{2}$. 
This is clear as $|y-\sfrac{1}{2}|\le \sfrac{1}{2}$ when $y \in [0,1]$ and $|x-\sfrac{1}{4}|<\sfrac{1}{2}$ when $x < \sfrac{3}{4}$.  
Next, we claim that there is no $\alpha\in \Real$ such that $x+\alpha(\hat{x}-\sfrac{1}{2}) \geq \sfrac{3}{4}$ is valid for $S$.
Assume by contradiction that $x+\alpha (\hat{x}-\sfrac{1}{2}) \ge \sfrac{3}{4}$ is valid for $S$ for some $\alpha \in \Real$.
Since $(x,y,\hat{x})=(0,0,0) \in S$, we must have $-\sfrac{\alpha}{2} \ge \sfrac{3}{4}$.
Since $(x,y,\hat{x})=(1,1,1) \in S$, we must have $1+\sfrac{\alpha}{2} \ge \sfrac{3}{4}$.
This is the desired contradiction as the former expression requires that $\alpha \le -\sfrac{3}{2}$ while the later requires that $\alpha \ge -\sfrac{1}{2}$.
\end{restatable}

\subsection{A framework for sequence-independent lifting}

Given a set of variables fixed at their bounds and a seed inequality for the corresponding restriction, a valid inequality for the original problem can be obtained by lifting each fixed variable one at the time.
This computationally demanding process requires the solution of a non-convex nonlinear optimization problem, similar to (\ref{example:lifting}), to lift each variable. 
It results in a lifted inequality whose form depends on the order in which variables are lifted. Next, we study situations where the lifting inequality obtained does not depend on the order in which variables are lifted. 
In particular,  we develop a subadditive theory for lifting in QCQPs that is inspired by that originally developed in MILP in \cite{wolsey1977valid}.
We consider the special case of separable bipartite bilinear constraints.
\begin{restatable}{definition}{DefSeparable}
\label{def:separable}
A set $Q$ is a \textit{separable bipartite bilinear set} if it is of the form
\begin{eqnarray*}
Q:= \left\{ \ (x , y) \in [0, 1]^n \times [0, 1]^n \ \Bigm|\  \sum_{i = 1}^n a_i x_iy_i \geq d \ \right\},
\end{eqnarray*}
for some $d$ and $a_i \in \Real$ for $i \in [n]$, \textit{i.e.}, variables $x_i$ and $y_i$, for $i \in [n]$, appear in only one term. \end{restatable}

In the separable case, it is natural to lift each pair of variables $x_i$ and $y_i$ together.
Next, we derive conditions 
that guarantee that the form of the lifted inequality obtained is independent of the order in which these pairs are lifted. 
This result is obtained, as is common in MILP, by deriving a subadditive upper bound on the lifting function of the seed inequality, from which all lifting coefficients can be derived.
 

\begin{restatable}{definition}{liftingfunction}\label{def:liftingfunction}
Let $Q$ be a separable bipartite bilinear set. 
Assume that $\Lambda = \{I, J_0, J_1\}$ is a partition of $[n]$ (\textit{i.e.}, $I\cup J_0\cup J_1 = [n]$ with $I\cap J_0=I\cap J_1=J_0\cap J_1=\emptyset$) and that
$h(x_I, y_I) \geq r$, 
is a valid inequality for $\{(x,y)\in Q \ |\  x_{J_0}=y_{J_0}=0,\ x_{J_1}=y_{J_1}=1\}$. 
For $\delta \in \Real$, we define the lifting function of the seed inequality
as 
$$ \textstyle
\phi(\delta):= \max\left\{ \ r-h(x_I, y_I) \ \Bigm| \ \sum_{i\in I} a_ix_iy_i\geq \left(d-\sum_{i\in J_1}a_i\right) -\delta, \ (x_I,y_I) \in [0,1]^{2|I|}
\ \right\}.
$$
\end{restatable}

Structured approximations of lifting functions allow for simple lifting of inequalities as described next in Proposition~\ref{lm:seqind}, whose proof can be found in Section~\ref{section:seqind}. 

\begin{restatable}{proposition}{LmSeqInd}
\label{lm:seqind}
Let $Q$ be a separable bipartite bilinear set and let $\Lambda = \{I, J_0, J_1\}$ be a partition of $[n]$. 
Let $\phi$ be the lifting function of seed inequality $h(x_I, y_I) \geq r$
for $\{(x,y)\in Q \ |\  x_{J_0}=y_{J_0}=0,\ x_{J_1}=y_{J_1}=1\}$ where $h$ is a concave function. 
Assume there exists $\psi: \Real \mapsto \Real$ and concave functions $\gamma_i:\Real^2 \mapsto \Real$ for $i \in J_0 \cup J_1$ such that 
\begin{enumerate}[label={(\roman*)},align=left]
\item \label{item:seqind:1}
$\psi(\delta)\geq \phi(\delta)$, $\forall \delta \in \Real$;
\item \label{item:seqind:2}
$\psi$ subadditive, (\textit{i.e.}, $\psi(\delta_1)+\psi(\delta_2)\geq \psi(\delta_1+\delta_2)$, $\forall \delta_1,\delta_2\in\Real$) with $\psi(0) = 0$;
\item \label{item:seqind:3}
for $i\in J_0$, 
$\gamma_i(x,y)\geq \psi(a_ixy), \forall (x,y)\in [0,1]^2,$
\item \label{item:seqind:4}
for $i\in J_1$, 
$\gamma_i(x,y)\geq \psi(a_ixy-a_i), \forall (x,y)\in [0,1]^2.$
\end{enumerate}
Then, the lifted inequality 
$h(x_I, y_I)+\sum_{i\in J_0\cup J_1} \gamma_i(x_i,y_i)\geq r$
is a valid convex inequality for $Q$.
\end{restatable}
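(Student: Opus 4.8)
\textbf{Proof plan for Proposition~\ref{lm:seqind}.}

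The plan is to establish validity directly: take an arbitrary point $(x,y)\in Q$ and show it satisfies the lifted inequality $h(x_I,y_I)+\sum_{i\in J_0\cup J_1}\gamma_i(x_i,y_i)\geq r$. First I would introduce the ``slack'' carried by the lifted pairs. For a point $(x,y)\in Q$, set $\delta := \sum_{i\in J_0} a_i x_i y_i + \sum_{i\in J_1}(a_i x_i y_i - a_i)$, so that the separable constraint $\sum_{i=1}^n a_i x_i y_i \geq d$ can be rewritten as $\sum_{i\in I} a_i x_i y_i \geq \bigl(d-\sum_{i\in J_1}a_i\bigr) - \delta$. This means $(x_I,y_I)$ is feasible for the optimization problem defining $\phi(\delta)$, hence $r - h(x_I,y_I)\leq \phi(\delta)$, i.e.\ $h(x_I,y_I)\geq r-\phi(\delta)$.

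Next I would control $\phi(\delta)$ from above using $\psi$ and then peel off the contributions term by term. By \ref{item:seqind:1}, $\phi(\delta)\leq\psi(\delta)$. Writing $\delta = \sum_{i\in J_0}\delta_i + \sum_{i\in J_1}\delta_i$ where $\delta_i := a_i x_i y_i$ for $i\in J_0$ and $\delta_i := a_i x_i y_i - a_i$ for $i\in J_1$, repeated application of subadditivity \ref{item:seqind:2} gives $\psi(\delta)\leq \sum_{i\in J_0\cup J_1}\psi(\delta_i)$ (a straightforward induction on $|J_0\cup J_1|$, using $\psi(0)=0$ to handle the degenerate case of an empty index set, though that case also follows directly since then $\delta=0$). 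Now invoke \ref{item:seqind:3} for $i\in J_0$ and \ref{item:seqind:4} for $i\in J_1$: since $(x_i,y_i)\in[0,1]^2$, we have $\psi(\delta_i)\leq\gamma_i(x_i,y_i)$ in both cases. Chaining these inequalities yields $h(x_I,y_I)\geq r - \phi(\delta)\geq r - \sum_{i\in J_0\cup J_1}\gamma_i(x_i,y_i)$, which rearranges to exactly the claimed inequality. Since $(x,y)\in Q$ was arbitrary, the lifted inequality is valid for $Q$.

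Finally, convexity of the lifted inequality follows because $h$ is concave by hypothesis and each $\gamma_i$ is concave by hypothesis, so $(x,y)\mapsto h(x_I,y_I)+\sum_{i\in J_0\cup J_1}\gamma_i(x_i,y_i)$ is a sum of concave functions, hence concave; a lower-bound inequality on a concave function describes a convex region.

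I do not anticipate a genuine obstacle here: the argument is essentially the standard MILP sequence-independent lifting template of \cite{wolsey1977valid} transported to this setting, and the only point requiring mild care is the bookkeeping in the rewriting of the constraint (so that the $\delta_i$ defined from the $J_1$ indices carry the $-a_i$ shift consistent with the $d-\sum_{i\in J_1}a_i$ appearing in Definition~\ref{def:liftingfunction}). The reason sequence-independence holds — though it is not what the proposition literally asserts — is that the resulting coefficient functions $\gamma_i$ depend only on $\psi$ and $a_i$, not on any lifting order; the proposition as stated just needs the validity conclusion, which the above establishes in one shot.
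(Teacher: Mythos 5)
Your proposal is correct and follows essentially the same argument as the paper: the paper's proof is the identical chain of inequalities (feasibility of $(x_I,y_I)$ for the problem defining $\phi(\delta)$, then $\phi\le\psi$, subadditivity of $\psi$ applied to the same decomposition $\delta_i=a_ix_iy_i$ for $i\in J_0$ and $\delta_i=a_ix_iy_i-a_i$ for $i\in J_1$, and finally conditions \ref{item:seqind:3}--\ref{item:seqind:4}), merely written in the reverse order. No gaps.
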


The statement of Proposition~\ref{lm:seqind} does not specify the type of functional forms $\gamma_i(x_i,y_i)$ to use in ensuring that conditions \ref{item:seqind:3} and \ref{item:seqind:4} are satisfied.  
It is however clear from the definition that choosing $\gamma_i(x_i,y_i)$ to be the concave envelope of $\psi(a_ix_iy_i)$ over $[0,1]^2$ when $i \in J_0$, and the concave envelope of $\psi(a_ix_iy_i - a_i)$ over $[0,1]^2$ when $i \in J_1$ is the preferred choice for $\gamma_i$.

\begin{remark}
While we state the result of Proposition~\ref{lm:seqind} for a set $Q$ defined by a single separable bipartite bilinear constraint, a similar result would also hold for sets defined by  multiple separable bipartite bilinear constraints.
\end{remark}

\subsection{A seed inequality from a minimal covering set}

To generate lifted inequalities for separable bipartite bilinear sets, we focus next on a family of restrictions we refer to as \textit{minimal covering sets}. 
For such minimal covering sets, we introduce a provably strong convex, second-order cone representable valid inequality. 
We use this inequality as the seed in our lifting procedures. 

\begin{restatable}{definition}{DefMinimal}
\label{def:minimal}
Let $k \in \Ze_{+}$ be a positive integer. 
We say that $a_i\in \Real$ for $i \in [k]$ form a \textit{minimal cover} of $d\in \Real$,
if (i) $a_i > 0$ for all $i \in [k]$, $d >0$, (ii) $\sum_{i = 1}^k a_i > d$, (iii) $\sum_{i \in K} a_i \leq d$, $\forall K \subsetneq [k]$.
For a separable bipartite bilinear set $Q$, we say that a partition $\Lambda = \{I, J_0, J_1\}$ of $[n]$, where $I\neq \emptyset$, is a \textit{minimal cover yielding partition} if: $a_i$ for $i \in I$ form a minimal cover of $d^\Lambda:= d - \sum_{i\in J_1} a_i$. 
For a minimal cover yielding partition, we let $J_0^+ := \{ i\in J_0\ |\ a_i>0 \}$, $J_0^- := \{ i\in J_0\ |\ a_i < 0 \}$; we define $J_1^+$ and $J_1^-$ similarly.
\end{restatable}

\begin{remark}
When $k \geq 2$, conditions (ii)
and (iii)
in the definition of minimal cover imply condition (i) 
For example, 
if $a_i \leq 0$ for some $i\in [k]$, then (ii)
implies $\sum_{j \in [k] \setminus \{i\}} a_j > d$, contradicting 
(iii).
Now (iii)
together with $a_i >0$ for $i \in [k]$ implies $d >0$.
\end{remark}

\begin{notation}\label{not:1}
Assuming that $a_i$ for $i \in [n]$ form a minimal cover of $d$, we use 
(i) $\Delta:= \sum_{i=1}^n a_i - d$, 
(ii) $d_i:= d - \sum_{j\in [n]\backslash \{i\}} a_j$, 
(iii) $I^> := \{i \in [n] \ |\  a_i > \Delta\}$, 
(iv) when $I^> \neq \emptyset$, $\i0$ to be any index in $I^>$ such that $a_{\i0} = \min \{ a_i \ |\  i \in I^> \}$.
\end{notation}
For a minimal cover, conditions (ii)
and 
(iii)
in Definition~\ref{def:minimal} imply that $\Delta>0$ and $a_i \ge \Delta$ for all $i \in [n]$, respectively.
Simple computations show that $d_i = a_i - \Delta$.

Our overall plan is the following. 
We will fix $x_i = y_i=0$ for $i\in J_0$ and $x_iy_i=1$ for $i\in J_1$.
Then, we will find a valid seed inequality for the set where the coefficients form a minimal cover. 
Finally, we will lift this seed inequality. 
One key reason to generate cuts from a seed inequality corresponding to a minimal cover is the following result.

\begin{restatable}{theorem}{ThmMinimal}
\label{thm:minimal}
For a nonempty separable bilinear set $Q$, either there exists at least one minimal cover yielding partition or $\conv(Q)$ is polyhedral.
\end{restatable}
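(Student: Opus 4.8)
The plan is to show that if $Q$ is nonempty and \emph{no} minimal cover yielding partition exists, then $\conv(Q)$ must be polyhedral. I would start by recalling the structure: a minimal cover yielding partition $\Lambda=\{I,J_0,J_1\}$ requires $I\neq\emptyset$ and that $\{a_i\}_{i\in I}$ form a minimal cover of $d^\Lambda = d-\sum_{i\in J_1}a_i$, i.e. the three conditions of Definition~\ref{def:minimal} (strict positivity of the $a_i$'s for $i\in I$ and of $d^\Lambda$, the overload condition $\sum_{i\in I}a_i > d^\Lambda$, and minimality $\sum_{i\in K}a_i\le d^\Lambda$ for every proper subset $K\subsetneq I$). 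The key observation is that a minimal cover is exactly what makes the constraint $\sum_{i\in I}a_ix_iy_i\ge d^\Lambda$ ``genuinely nonlinear'' over the box $[0,1]^{2|I|}$ — it forces \emph{all} the products $x_iy_i$ with $i\in I$ to be close to $1$ simultaneously, and when we relax one to the interior the others must compensate in a way that the box corners cannot achieve linearly.

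The main step is a dichotomy argument on the structure of $Q$ relative to the box corners. Fix attention on the products $z_i := x_iy_i\in[0,1]$; note that as $(x_i,y_i)$ range over $[0,1]^2$, $z_i$ independently ranges over $[0,1]$, and the map $(x,y)\mapsto z$ together with the bilinear-to-linear relaxation is well understood. I would consider the polytope $P := \{z\in[0,1]^n\mid \sum_i a_iz_i\ge d\}$ obtained by replacing each product by a single variable; then $Q$ is, roughly, the preimage of $P$ under the componentwise product map, and $\conv(Q)$ relates to $P$ via the concave/convex envelopes of each product term. The claim is: \emph{either} $P$ is a ``nontrivial'' knapsack polytope in the sense that its description requires a minimal cover on some index subset after fixing the others to bounds — in which case that subset yields a minimal cover yielding partition — \emph{or} $P$ is trivial enough (e.g. $P=[0,1]^n$, or the knapsack constraint is implied, or it only ``cuts off'' box vertices in a way governed by a linear inequality) that $\conv(Q)$ is polyhedral. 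To make this precise I would argue by contradiction: suppose no minimal cover yielding partition exists. Then for \emph{every} partition $\{I,J_0,J_1\}$ with $I\neq\emptyset$, the coefficients $\{a_i\}_{i\in I}$ fail to form a minimal cover of $d^\Lambda$. Taking $J_0=J_1=\emptyset$ first, $\{a_i\}_{i\in[n]}$ is not a minimal cover of $d$, so one of: (a) some $a_i\le 0$ or $d\le 0$; (b) $\sum_i a_i\le d$; (c) some proper subset already covers, $\sum_{i\in K}a_i > d$ for some $K\subsetneq[n]$. In cases (a) with $d\le 0$ or many $a_i\le 0$, the constraint is either always satisfiable trivially or reduces to a convex region whose hull over the box is polyhedral (the nonlinear terms with $a_i\le 0$ are concave in $z_i$, handled by McCormick-type linearizations which are exact at corners). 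In case (b), either $Q$ is empty (excluded) or the constraint is never binding at a corner and $\conv(Q)$ is the box. Case (c) is the recursive heart: if a proper subset $K$ overloads, I would try to reduce to a smaller instance — but the hypothesis says \emph{no} subset, after fixing the complement to bounds, forms a minimal cover, so by induction on $n$ (and by the fixing operation $x_i=y_i=1$, which is where the $J_1$ part of the partition comes in, turning the problem into one on fewer variables with right-hand side $d-\sum a_i$) I would peel off variables until either I reach a minimal cover (contradiction) or I reach a base case where the remaining constraint is vacuous or box-implied.

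The cleanest route is probably induction on $n$. Base case $n=1$: $Q=\{(x_1,y_1)\in[0,1]^2\mid a_1x_1y_1\ge d\}$; here $\{a_1\}$ is a minimal cover of $d$ iff $a_1>d>0$ (the "proper subset" condition is vacuous for $|I|=1$, and the empty subset gives $0\le d$ which holds since $d>0$), so if no minimal cover yielding partition exists we must have $d\le 0$ (so $Q=[0,1]^2$, polyhedral, using $a_1>0$ — actually need care if $a_1\le0$) or $a_1\le d$ (so $Q$ empty or a single nonlinear constraint $x_1y_1\ge d/a_1$ with $d/a_1\ge1$, forcing $x_1=y_1=1$ if $d/a_1=1$, a point; empty if $>1$); all polyhedral or empty. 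Inductive step: given the constraint $\sum_{i=1}^n a_ix_iy_i\ge d$, if $\{a_i\}_{i=1}^n$ is itself a minimal cover of $d$ we are done. Otherwise: if the overload fails ($\sum a_i\le d$), $Q$ is empty or box; if some proper $K$ overloads, pick $j\notin K$ and split into $z_jx_j\in\{$ at value $1\}$ versus $z_j<1$ — more carefully, use the standard trick that $\conv(Q) = \conv\big(\conv(Q\cap\{x_j=y_j=1\})\cup \conv(Q\cap\{x_jy_j\text{ fractional, handled by envelope}\})\big)$ isn't quite clean for continuous variables, so instead I would directly fix $x_j=y_j=1$ to get a set $Q'$ on $n-1$ variable pairs with right-hand side $d-a_j$; the hypothesis (no minimal cover yielding partition for $Q$, with $J_1\supseteq\{j\}$) implies no minimal cover yielding partition for $Q'$, so by induction $\conv(Q')$ is polyhedral; then combine with the observation that the original nonlinearity, in the regime where not all variables are at their upper bounds, is controlled because some index can be driven to its bound without leaving $Q$ — giving that $\conv(Q)$ is the convex hull of $\conv(Q')$ (lifted back) together with finitely many box faces, hence polyhedral.

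\textbf{Main obstacle.} The delicate point is the inductive combination step: arguing that when $\{a_i\}$ fails minimality because a \emph{proper} subset already covers $d$, the convex hull of $Q$ genuinely decomposes into finitely many polyhedral pieces glued along box faces — i.e. that the nonlinear term $\sum a_ix_iy_i$ contributes nothing new to $\conv(Q)$ beyond its McCormick relaxation once some variable can be ``freely'' fixed. One must rule out the possibility that the nonlinear constraint creates a curved boundary that survives in $\conv(Q)$ even without the minimal-cover structure; I expect this requires a careful case analysis of which $a_i$ are positive versus negative and an appeal to the characterization of extreme points of bipartite bilinear sets from~\cite{dey2019new} (as used in the proof of Theorem~\ref{THM_EXISTENCE}), showing that all extreme points of $\conv(Q)$ are then vertices of an explicit polytope. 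Getting the bookkeeping right on the $J_0^+/J_0^-/J_1^+/J_1^-$ sign partition — so that the recursion actually terminates at a minimal cover rather than cycling — is the part that will take the most care.
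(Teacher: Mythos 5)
There is a genuine gap, and you have correctly located it yourself: the ``inductive combination step'' is never carried out, and it is precisely the step where the argument must do real work. Fixing $x_j=y_j=1$ gives you $\conv(Q')$ on a face of the box, but $\conv(Q)$ is not recovered from $\conv(Q')$ ``lifted back together with finitely many box faces''---you still have to control the portion of $Q$ where $x_jy_j<1$, which is exactly where a curved boundary could survive. Your sketch also treats the sign cases (some $a_i\le 0$, $d\le 0$) by assertion rather than proof, and the base-case analysis is already uncertain (``actually need care if $a_1\le 0$''). As written, the proposal is a plan with the hardest part deferred, not a proof.

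The paper's argument shows what you are missing: a much stronger and simpler dichotomy, driven entirely by the extreme-point characterization of \cite{dey2019new} (every extreme point of $Q$ has $(x^*_j,y^*_j)\in\{0,1\}^2$ for all but one index). The key observation is that a \emph{singleton} $I=\{i\}$ already yields a minimal cover yielding partition: if some extreme point $(x^*,y^*)$ has $x^*_iy^*_i\notin\{0,1\}$ with $a_i>0$, set $I=\{i\}$, $J_0=\{j\,|\,x^*_jy^*_j=0\}$, $J_1=\{j\,|\,x^*_jy^*_j=1\}$; since the constraint is tight at an extreme point, $a_i>a_ix^*_iy^*_i=d-\sum_{j\in J_1}a_j=d^\Lambda>0$, and for $|I|=1$ the minimal-cover conditions reduce to exactly $a_i>d^\Lambda>0$. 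So no recursion on subsets is needed at all. In the complementary case---every extreme point has $(x^*_i,y^*_i)\in\{0,1\}^2$ for all $i$ with $a_i>0$---one enumerates the finitely many $0/1$ fixings of the positive-coefficient pairs; each restriction is a packing-type set whose convex hull is a polytope by \cite[Proposition 17]{richard2010liftingframework}, and $\conv(Q)$ is the convex hull of this finite union, hence polyhedral. Your instinct to invoke the extreme-point characterization was right, but it needs to be the engine of the proof rather than a tool for the bookkeeping at the end; once it is, both the induction on $n$ and the delicate gluing step disappear.
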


Loosely speaking, the proof of Theorem~\ref{thm:minimal}, which is given in Section~\ref{section:minimal}, is based on showing that if there is no minimal cover yielding partition, then $Q$ is ``almost" a packing-type set, \textit{i.e.}, a set of the form $\{ (x,y) \in [0, 1]^{2n} \ |\ \sum_{i = 1}^n a_ix_iy_i \leq d\}$ where $a_i$s are non-negative.
For packing sets $Q$, \cite{richard2010liftingframework} shows that $\textup{conv}(Q)=\textup{proj}_{x,y}(G)$ where
\begin{eqnarray*}
G  = \left\{(x,y,w) \in [0, 1]^{3n} \ \Bigm|\  \sum_{i = 1}^n a_iw_i \leq d, \ x_i + y_i -1 \leq w_i,\  \forall i \in [n]\right\}.
\end{eqnarray*}
We say ``almost", since there are non-packing sets such as $S:= \{ (x,y) \in [0, 1]^4\ |\  x_1y_1 - 100x_2y_2 \geq -98\}$, where there is no partition that yields a minimal cover. 
Such sets are ``overwhelmingly" like a packing set; in the case of the example, it is a perturbation of the packing set $\{ (x_2, y_2) \in [0, 1]^2\ |\  100x_2y_2\leq 98\}$. 
For such sets it is not difficult to show that $\conv(S)$ is polyhedral.

Since the main focus of this paper is the study of lifted convex (nonlinear) inequalities and since in the packing case the convex hull is trivially obtained using McCormick inequalities~\cite{mccormick1976computability}, the remainder of the paper will concentrate on the case where there exists a minimal cover yielding partition.

Associated with a minimal cover is a convex valid inequality that we present next.

\begin{restatable}{theorem}{ThmValid}
\label{thm:valid}
Consider the separable bipartite bilinear minimal covering set 
as presented in Definition~\ref{def:separable}
where $a_i$, $i \in [n]$ form a minimal cover of $d$.
Then, the \textit{bilinear cover inequality} is valid for $Q$:
\begin{eqnarray} \label{eq:bilincoverineq}
\sum_{i = 1}^n \frac{\sqrt{a_i}}{ \sqrt{a_i}-\sqrt{d_i}}\left( \sqrt{x_iy_i} - 1\right) \geq -1.
\end{eqnarray}
\end{restatable}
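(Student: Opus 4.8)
The plan is to show that the inequality $\sum_{i=1}^n \frac{\sqrt{a_i}}{\sqrt{a_i}-\sqrt{d_i}}(\sqrt{x_iy_i}-1) \geq -1$ is valid by verifying it at every point of $Q$; since the left-hand side is concave in $(x,y)$ (each $\sqrt{x_iy_i}$ is concave on $[0,1]^2$ and the coefficient $\frac{\sqrt{a_i}}{\sqrt{a_i}-\sqrt{d_i}}$ is positive because $0 \le d_i = a_i - \Delta < a_i$), it suffices to check validity at extreme points of $Q$. First I would recall from \cite{dey2019new} (as invoked in the discussion after Theorem~\ref{THM_EXISTENCE}) the characterization of extreme points of a bipartite bilinear set: at an extreme point, for each $i$ the pair $(x_i,y_i)$ lies at a vertex of $[0,1]^2$ except for at most a controlled number of ``fractional'' coordinates tied up in the single active constraint $\sum_i a_i x_i y_i = d$. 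Let $t_i := \sqrt{x_iy_i} \in [0,1]$ and $c_i := \frac{\sqrt{a_i}}{\sqrt{a_i}-\sqrt{d_i}} > 0$; note $a_i t_i^2 \ge a_i x_i y_i$ is false in general, but $x_i y_i \le t_i^2$... actually $t_i^2 = x_i y_i$ by definition, so the constraint reads $\sum_i a_i t_i^2 \ge d$ together with $t_i \in [0,1]$ (the map $(x_i,y_i)\mapsto \sqrt{x_iy_i}$ is onto $[0,1]$, and for validity we only need: the inequality holds for all $(t_1,\dots,t_n)\in[0,1]^n$ with $\sum a_i t_i^2 \ge d$).

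So the problem reduces to the single-variable-per-index question: show $\sum_{i=1}^n c_i(t_i-1) \ge -1$ for all $t \in [0,1]^n$ with $\sum_i a_i t_i^2 \ge d$. Equivalently, I would prove that the minimum of $\sum_i c_i(1-t_i)$ subject to $\sum_i a_i t_i^2 \ge d$, $t\in[0,1]^n$ is at most $1$. Since we minimize a linear function over a region whose ``hard'' boundary $\sum a_i t_i^2 = d$ is the complement of a convex set, the minimum is attained either at $t = \mathbf{1}$ (value $0 \le 1$, fine) or at a boundary point where $\sum a_i t_i^2 = d$ and, by a KKT/exchange argument, all but possibly one $t_i$ is at a bound $\{0,1\}$. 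The combinatorial heart is then: partition $[n]$ into $L$ (indices with $t_i=1$), $Z$ (indices with $t_i=0$), and at most one free index $i^*$ with $t_{i^*}=s\in(0,1)$. The minimal-cover hypothesis — every proper subset has $\sum_{i\in K} a_i \le d$, hence $Z\neq\emptyset$ is forced unless the free index carries enough weight — controls exactly which such configurations are feasible, and for each I must check $\sum_{i\in Z} c_i + c_{i^*}(1-s) \le 1$. Using $\sum_{i\in L} a_i + a_{i^*}s^2 \ge d$, i.e. $a_{i^*}s^2 \ge d - \sum_{i\in L}a_i = d_{i^*} + \sum_{i\in Z} a_i \ge d_{i^*}$ when $Z\neq\emptyset$... here I would plug in $s^2 \ge d_{i^*}/a_{i^*}$, so $s \ge \sqrt{d_{i^*}/a_{i^*}}$, giving $c_{i^*}(1-s) \le c_{i^*}(1-\sqrt{d_{i^*}/a_{i^*}}) = \frac{\sqrt{a_{i^*}}}{\sqrt{a_{i^*}}-\sqrt{d_{i^*}}}\cdot\frac{\sqrt{a_{i^*}}-\sqrt{d_{i^*}}}{\sqrt{a_{i^*}}} = 1$; and if additionally $Z=\emptyset$ then $L = [n]\setminus\{i^*\}$, a proper subset, so $\sum_{i\in L}a_i \le d$ and the same bound $c_{i^*}(1-s)\le 1$ holds with nothing to add. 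The case $Z\neq\emptyset$ needs the extra terms $\sum_{i\in Z}c_i$ absorbed: here I would use that the minimal-cover structure forces $|Z|=1$ in any \emph{tight} feasible extreme configuration — if two or more indices were at $0$ then $L\cup\{i^*\}$ with $t_{i^*}=1$ would already be a proper subset summing to $\le d$, and pushing the remaining zeros up can only help, so the genuinely binding cases have exactly one zero index $j$, no free index (or vice versa), giving $\sum_i c_i(t_i-1) = -c_j \ge -1$, which is exactly the requirement $c_j \le 1$; but $c_j = \frac{\sqrt{a_j}}{\sqrt{a_j}-\sqrt{d_j}} \le 1$ is \emph{false} ($c_j > 1$ always!), so this naive reading is wrong and the single free coordinate must be kept — the true worst case has both one zero index and the compensating fractional index, and the inequality $c_j + c_{i^*}(1-s) \le 1$...

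The main obstacle, as the above tangle shows, is precisely pinning down the correct worst-case extreme-point configuration and the accompanying inequality among the $a_i$'s: one must argue carefully (via the minimal-cover conditions $\sum_{i\in K}a_i \le d$ for proper $K$ and the identity $d_i = a_i-\Delta$) that in any feasible $t$ minimizing $\sum c_i(1-t_i)$ the quantity $\sum_i c_i(1-t_i)$ never exceeds $1$, and the cleanest route is probably \emph{not} brute-force case analysis but rather an aggregation/convexity argument: define $g_i(t_i) := c_i(1-t_i)$ and find, for each $i$, the best concave (in fact the tightest valid) lower bounding relation between $c_i(1-t_i)$ and $a_i t_i^2$ on $[0,1]$, then sum. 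Concretely, I expect the key lemma to be a one-dimensional estimate of the form $c_i(1-t_i) \le 1 - \frac{a_i t_i^2 - d_i'}{(\text{something})}$ valid for $t_i\in[0,1]$, chosen so that summing over $i$ and using $\sum a_i t_i^2 \ge d = \sum_i d_i + (n-1)\Delta$ ... I would derive this one-variable inequality by noting the tangency: $c_i(1-t_i) = 1$ exactly at $t_i = \sqrt{d_i/a_i}$, and the parabola $a_i t_i^2$ equals $d_i$ there, so the line through that point should support everything. Once that per-coordinate inequality is nailed down and shown to be valid on all of $[0,1]$ (checking the two endpoints $t_i=0$ and $t_i=1$ plus concavity/convexity direction), the sum is immediate. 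I would reserve the bulk of the written proof for (a) reducing to extreme points via concavity and the structure of \cite{dey2019new}, and (b) establishing the sharp one-dimensional inequality, with the minimal-cover hypothesis entering exactly to guarantee $d_i \ge 0$ (so $c_i$ is well-defined and positive) and to control the right-hand-side accounting $d = \sum_i d_i + (n-1)\Delta$.
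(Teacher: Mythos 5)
Your route is genuinely different from the paper's. The paper proves validity by first showing (via the minimal cover property and the extreme-point characterization of bipartite bilinear sets) that $\conv(Q)=\conv(\bigcup_i Q_i)$ where $Q_i$ fixes $x_j=y_j=1$ for $j\neq i$ and keeps $\sqrt{a_i}\sqrt{x_iy_i}\ge\sqrt{d_i}$; it then writes a disjunctive-programming extended formulation of this union, relaxes it with the arithmetic--geometric mean bound $\sqrt{(x_i-(1-\lambda_i))(y_i-(1-\lambda_i))}\le\sqrt{x_iy_i}-(1-\lambda_i)$, and Fourier--Motzkin projects out the multipliers, obtaining a stronger inequality of which (\ref{eq:bilincoverineq}) is a weakening. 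Your substitution $t_i=\sqrt{x_iy_i}$ is a legitimate and more elementary alternative: it reduces validity on \emph{all} of $Q$ (no extreme-point reduction or concavity argument is even needed for this step) to showing $\sum_i c_i(1-t_i)\le 1$ on $T:=\{t\in[0,1]^n:\sum_i a_it_i^2\ge d\}$, where $c_i=\sqrt{a_i}/(\sqrt{a_i}-\sqrt{d_i})$; the extreme points of $T$ do have at most one fractional coordinate, so a finite case analysis can finish the job. What this buys is brevity and no extended formulation; what it loses is the stronger projected inequality the paper obtains as a by-product.

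The gap is that you abandon the case analysis at exactly the point where it closes, based on a false claim. (a) You assert $c_j>1$ ``always,'' but $c_j=1$ precisely when $d_j=0$; in the configuration with exactly one zero index $j$ and no fractional index, feasibility gives $\sum_{i\neq j}a_i\ge d$ while minimality gives $\sum_{i\neq j}a_i\le d$, hence $d_j=0$, $c_j=1$, and the inequality is tight at $-1$ rather than violated. (b) The ``true worst case'' you then chase---one zero index together with a fractional index $i^*$---is infeasible: if some $t_j=0$ then $L\cup\{i^*\}$ is a proper subset of $[n]$, so $\sum_{i\in L}a_i\le d-a_{i^*}$, and feasibility forces $a_{i^*}s^2\ge d-\sum_{i\in L}a_i\ge a_{i^*}$, i.e.\ $s\ge1$, a contradiction. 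The complete list of extreme configurations is therefore: all $t_i=1$ (value $0$); exactly one $t_j=0$ and the rest $1$ (value $-c_j=-1$ by (a), with two or more zeros impossible since it would force some $a_i\le0$); or no zeros and one fractional $s\ge\sqrt{d_{i^*}/a_{i^*}}$, where your own computation gives $c_{i^*}(1-s)\le1$. Since you instead declare the case analysis wrong and punt to an ``aggregation lemma'' whose denominator is left as ``(something)'' and is never derived, the proposal as written does not constitute a proof; you also conflate minimum and maximum of $\sum_i c_i(1-t_i)$ when setting up the optimization.
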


Our proof of Theorem~\ref{thm:valid}, which is presented in Section~\ref{section:valid}, uses techniques from disjunctive programming~\cite{balas1998disjunctive} and an ``approximate version"  of Fourier-Motzkin projection.  In particular, using the minimal covering property of the coefficients 
and a characterization of the extreme points of bipartite bilinear sets~\cite{dey2019new}, we obtain $n$ second-order cone representable sets whose union contains all the extreme points 
separable bipartite bilinear set. 
Next we write an extended formulation~\cite{balas1998disjunctive,ben2001lectures} of the convex hull of the union of these sets. 
Finally, we use the Fourier-Motzkin procedure to project out the auxiliary variables of the extended formulation one at a time. 
This procedure works to project out most of the variables.
The last step however requires a relaxation to be constructed so that projection can be carried in closed-form. 
Finally we obtain an inequality that is in fact stronger than (\ref{eq:bilincoverineq}).


Inequality (\ref{eq:bilincoverineq}) can be viewed as a strengthening of an inequality presented in \cite{tawarmalani2010strong} for the set $Q^{\textup{relax}}$ obtained from $Q$ by relaxing upper bounds on variables, \textit{i.e.},
$Q^{\textup{relax}} := \{  (x,y) \in \Real^{2n}_+ \  | \ \sum_{i =1}^n a_ix_iy_i \geq d \},$
where $a_i>0$ for $i \in [n]$ and $d>0$. 
The convex hull of $Q^{\textup{relax}}$ is shown in \cite{tawarmalani2010strong} to be described by nonnegativity constraints together with
\begin{eqnarray}\label{eq:oldieq}
\sum_{i = 1}^n \frac{\sqrt{a_i}}{\sqrt{d}}\sqrt{x_iy_i}\geq 1.
\end{eqnarray}


The ensuing proposition, whose proof we skip due to lack of space shows that (\ref{eq:bilincoverineq}) improves on (\ref{eq:oldieq}). 
It essentially proceeds by comparing the coefficients of variable pairs $\sqrt{x_iy_i}$ inside of the inequalities. 
Moreover, if $n\geq 2$ and there exists $i \in [n]$ such that $d_i > 0$, then (\ref{eq:bilincoverineq}) strictly dominates (\ref{eq:oldieq}).

\begin{proposition}\label{prop:comparedtoCRT}
Inequality (\ref{eq:oldieq}) is dominated by  (\ref{eq:bilincoverineq}) over the $0-1$ box, \textit{i.e.}, 
$$\{\ (x,y) \in [0, 1]^{2n}  \ |\  (\ref{eq:oldieq})\ \} \supseteq \{\ (x,y) \in [0, 1]^{2n}  \ |\  (\ref{eq:bilincoverineq}) \ \}.$$
\end{proposition}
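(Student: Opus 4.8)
The plan is to reduce the claimed set inclusion to an inclusion between two halfspaces in a new set of variables, and then verify that inclusion at finitely many vertices. First I would substitute $s_i := 1 - \sqrt{x_iy_i} \in [0,1]$, so that for $(x,y)\in[0,1]^{2n}$ we get $s\in[0,1]^n$. Under this substitution, since $\sqrt{x_iy_i}-1 = -s_i$, inequality (\ref{eq:bilincoverineq}) becomes the linear inequality $\sum_{i=1}^n c_i s_i \le 1$ with $c_i := \sqrt{a_i}/(\sqrt{a_i}-\sqrt{d_i})$, while (\ref{eq:oldieq}) becomes $\sum_{i=1}^n e_i s_i \le \big(\sum_{i=1}^n e_i\big) - 1$ with $e_i := \sqrt{a_i}/\sqrt{d}$. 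Recalling from Notation~\ref{not:1} that $d_i = a_i - \Delta$ with $\Delta > 0$ and $a_i \ge \Delta$, we have $d_i \in [0,a_i)$, so each $c_i$ is well defined, positive, and, crucially, $c_i \ge 1$. Hence it suffices to show $\{\, s \in [0,1]^n : \sum_i c_i s_i \le 1 \,\} \subseteq \{\, s : \sum_i e_i s_i \le \sum_i e_i - 1 \,\}$.

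Second, I would observe that because $c_i \ge 1$ the box upper bounds $s_i \le 1$ are redundant on the left-hand set: if $s \ge 0$ and $\sum_i c_i s_i \le 1$, then $c_i s_i \le 1$, hence $s_i \le 1/c_i \le 1$, for every $i$. So the left-hand set equals the simplex $\{\, s \ge 0 : \sum_i c_i s_i \le 1 \,\}$, whose vertices are the origin and the points $\tfrac{1}{c_j}\mathbf{e}_j$ for $j \in [n]$. Since the right-hand set is a halfspace, the inclusion holds if and only if it holds at each of these $n+1$ vertices. At the origin the condition reads $\sum_i e_i \ge 1$, i.e. $\sum_i \sqrt{a_i} \ge \sqrt{d}$, which follows from $\big(\sum_i \sqrt{a_i}\big)^2 \ge \sum_i a_i > d$, the strict inequality being condition (ii) in Definition~\ref{def:minimal}. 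At the vertex $\tfrac{1}{c_j}\mathbf{e}_j$, using the identity $e_j/c_j = (\sqrt{a_j}-\sqrt{d_j})/\sqrt{d}$, the condition simplifies after clearing $\sqrt d$ to $\sqrt{d} \le \sqrt{d_j} + \sum_{i \ne j}\sqrt{a_i}$.

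Third, I would close with the elementary subadditivity of the square root on the nonnegatives, $\sqrt{\sum_k b_k} \le \sum_k \sqrt{b_k}$ for $b_k \ge 0$. Applying it to the summands $\{a_i : i \ne j\} \cup \{d_j\}$, all nonnegative since $d_j = a_j - \Delta \ge 0$, and using the identity $d = \sum_{i \ne j} a_i + d_j$ (immediate from $d_j = a_j - \Delta$ and $\Delta = \sum_i a_i - d$), gives exactly $\sqrt{d} \le \sqrt{d_j} + \sum_{i \ne j}\sqrt{a_i}$. This settles every vertex, hence the halfspace inclusion; undoing the substitution $s_i = 1 - \sqrt{x_iy_i}$ then yields the proposition.

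The only non-mechanical point — the main obstacle — is the middle step: recognizing that, after the substitution, the feasible region of the bilinear cover inequality intersected with the box is a scaled simplex whose vertices are just the all-ones corner and its $n$ neighbours (this is what $c_i \ge 1$ buys), so that a finite vertex check against the weaker inequality suffices. Once that is in place, both vertex computations collapse to subadditivity of $\sqrt{\cdot}$, and the strict-domination addendum for $n \ge 2$ with some $d_i > 0$ follows by noting the corresponding vertex inequality is then strict.
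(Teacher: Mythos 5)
Your proof is correct. The paper actually omits the proof of this proposition, saying only that it ``proceeds by comparing the coefficients of variable pairs $\sqrt{x_iy_i}$ inside of the inequalities''; your argument is exactly that comparison in geometric form, since checking the halfspace at the vertex $\tfrac{1}{c_j}\mathbf{e}_j$ is the same as verifying $e_j \le (\sum_i e_i - 1)\,c_j$ for each $j$ (together with $\sum_i e_i \ge 1$ at the origin), and both reduce, as you show, to subadditivity of the square root via $d = d_j + \sum_{i\neq j} a_i$. All the supporting facts you use ($c_i \ge 1$ from $0 \le d_i < a_i$, redundancy of the box upper bounds, $\sum_i a_i > d$ from Definition~\ref{def:minimal}) are justified, so the argument is complete.
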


Even though Proposition~\ref{prop:comparedtoCRT} hints at the strength of the bilinear cover inequality, it can be easily verified that (\ref{eq:bilincoverineq}) does not produce the convex hull of $Q$. 
However there are a number of reasons to use this inequality as a seed for lifting. 
The first reason is that, not only is inequality (\ref{eq:bilincoverineq}) second-order cone representable, we only need to introduce one extra variable representing $\sqrt{x_iy_i}$ for each $i \in [n]$, to write it as a second-order cone representable set. 
Apart from the convenience of using this inequality within modern conic solvers, the main reason for considering it as a seed inequality is its strength. 
In particular, we prove next that (\ref{eq:bilincoverineq}) provides a constant factor approximation of the convex hull of the original set. 

\begin{restatable}{theorem}{ThmStrength}
\label{thm:strcovercon}
Let $Q$ be a bipartite bilinear minimal covering set 
Let $R:= \{ (x,y) \in \Real^{2n}_{+} \ |\  (\ref{eq:bilincoverineq})\}$. 
Then 
$ (4\cdot R) \cap [0, \ 1]^{2n} \subseteq \textup{conv}(Q) \subseteq R \cap [0, \ 1]^{2n}.$
\end{restatable}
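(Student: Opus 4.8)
The plan is to establish the two inclusions in $(4\cdot R)\cap[0,1]^{2n}\subseteq\conv(Q)\subseteq R\cap[0,1]^{2n}$ separately. The right inclusion is immediate from Theorem~\ref{thm:valid}: inequality~\eqref{eq:bilincoverineq} is valid for $Q$ and its coefficients on the terms $\sqrt{x_iy_i}$ are nonnegative (since $d_i = a_i-\Delta < a_i$ and both $a_i,d_i\ge 0$), so the left-hand side is a concave function of $(x,y)$ on $[0,1]^{2n}$ and hence the inequality defines a convex set containing $Q$; intersecting with the box gives $\conv(Q)\subseteq R\cap[0,1]^{2n}$. The substance is the left inclusion, which is a statement that $R$, scaled down by a factor of $4$ and cut back to the box, lies inside $\conv(Q)$.

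For the left inclusion, I would argue by taking an arbitrary point $(\bar x,\bar y)\in(4\cdot R)\cap[0,1]^{2n}$ and exhibiting it as a convex combination of points of $Q$ (or of points in $\conv(Q)$). Writing $t_i:=\sqrt{\bar x_i\bar y_i}/4\in[0,\tfrac14]$, membership of $(\bar x,\bar y)$ in $4\cdot R$ says precisely $\sum_i \frac{\sqrt{a_i}}{\sqrt{a_i}-\sqrt{d_i}}\big(\sqrt{\bar x_i\bar y_i}/4 - 1\big)\ge -1$, i.e. $\sum_i c_i t_i \ge \sum_i c_i - 1$ where $c_i := \frac{\sqrt{a_i}}{\sqrt{a_i}-\sqrt{d_i}}\ge 1$. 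The key geometric idea is that $\conv(Q)$, intersected with the box, contains all the ``boundary'' points where one coordinate pair $(x_j,y_j)$ is pushed to make $\sum a_i x_iy_i = d$ hold with the remaining pairs free in $[0,1]^2$; more usefully, since McCormick/bilinear relaxations tell us $\conv\{(x,y)\in[0,1]^2 : xy\ge s\}$ is governed by $\sqrt{xy}\ge\sqrt s$, one should think of the problem in the ``square-root coordinates'' $u_i=\sqrt{x_iy_i}$. In those coordinates $\conv(Q)\cap[0,1]^{2n}$ (projected onto the $u$-space) contains $\{u\in[0,1]^n : \sum a_i u_i^2 \ge d\}$ convexified — and by the Tawarmalani--Richard--Chung result quoted as~\eqref{eq:oldieq}, relaxing the upper bounds, that convex hull is described by $\sum \frac{\sqrt{a_i}}{\sqrt d}u_i\ge 1$. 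So the cleanest route is: (1) show that any $u$ with $u\in[0,1]^n$ and $\sum_i \frac{\sqrt{a_i}}{\sqrt d}u_i\ge 1$ lifts to a point of $\conv(Q)\cap[0,1]^{2n}$ (take $x_i=y_i=u_i$, use~\eqref{eq:oldieq} being the convex hull of $Q^{\textup{relax}}$, and separately handle that the box constraints are satisfied); and (2) show the purely scalar implication that if $u\in[0,\tfrac14]^n$ satisfies the scaled bilinear cover inequality $\sum_i c_i u_i \ge \sum_i c_i - 1$, then $4u$ satisfies $\sum_i \frac{\sqrt{a_i}}{\sqrt d}(4u_i)\wedge 1 \ge 1$ — more precisely, one needs $4u_i\le 1$ (which holds since $u_i\le\tfrac14$) and $\sum \frac{\sqrt{a_i}}{\sqrt d}\,4u_i \ge 1$.

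Step (2) is where the constant $4$ must be pinned down, and I expect it to be the main obstacle. Concretely one must prove: for all $u\in[0,\tfrac14]^n$, if $\sum_i c_i u_i \ge \sum_i c_i - 1$ with $c_i=\frac{\sqrt{a_i}}{\sqrt{a_i}-\sqrt{d_i}}$, then $\sum_i \frac{\sqrt{a_i}}{\sqrt d}\,u_i \ge \tfrac14$. Equivalently, the minimum of $\sum_i \frac{\sqrt{a_i}}{\sqrt d}u_i$ over $\{u\in[0,\tfrac14]^n : \sum c_i u_i \ge \sum c_i - 1\}$ is at least $\tfrac14$. This is a linear program in $u$; its optimum is attained at a vertex, where all but one $u_i$ are at $0$ or $\tfrac14$, so it reduces to a one-variable bound, and one then needs the coefficient comparison $\frac{\sqrt{a_i}/\sqrt d}{c_i} = \frac{\sqrt{a_i}-\sqrt{d_i}}{\sqrt d}$ combined with the minimal-cover inequalities $\Delta>0$, $d_i=a_i-\Delta$, $a_i\ge\Delta$, $\sum_i(a_i-d)=\Delta$ to verify the worst case gives exactly (or at worst) a factor $4$. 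I would also need to double-check that when $(\bar x,\bar y)\in 4\cdot R$ but some $\bar x_i\bar y_i$ is large (near $1$) the scaled point $u_i=\sqrt{\bar x_i\bar y_i}/4$ is genuinely $\le\tfrac14$, which it is, and that the lifting in step (1) respects $x_i,y_i\le 1$, which it does because $u_i\le 1$ implies the choice $x_i=y_i=\sqrt{u_i}$ or $x_i=u_i,y_i=1$ stays in the box. The factor $4$ presumably arises because scaling $u\mapsto 4u$ turns the square-root term: $\sqrt{x_iy_i}$ at a point with $x_iy_i = (4u_i)^2$-type scaling loses a factor of $2$ in each of two places, or more plausibly because the affine-in-$\sqrt{x_iy_i}$ inequality~\eqref{eq:bilincoverineq} versus the homogeneous~\eqref{eq:oldieq} differ by the shift ``$-1$'' whose effect over the box $[0,1]$ costs a bounded constant. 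Finishing the proof amounts to making this scalar estimate tight and assembling (1)+(2).
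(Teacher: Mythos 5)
Your right inclusion is fine, and your reformulation of membership in $4\cdot R$ in the coordinates $u_i=\sqrt{\bar x_i\bar y_i}$ is correct. The fatal problem is step~(1): you claim that any point of the box satisfying (\ref{eq:oldieq}) lies in $\conv(Q)$. This is false, and the paper itself tells you so: Proposition~\ref{prop:comparedtoCRT} and the remark preceding it state that (\ref{eq:bilincoverineq}) \emph{strictly} dominates (\ref{eq:oldieq}) over the box whenever $n\ge 2$ and some $d_i>0$, so there are box points satisfying (\ref{eq:oldieq}) that violate the valid inequality (\ref{eq:bilincoverineq}) and hence cannot be in $\conv(Q)$. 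Concretely, take $n=2$, $a_1=a_2=1$, $d=1.5$, and $x_i=y_i=0.65$: then $\sum_i\sqrt{a_i/d}\,\sqrt{x_iy_i}=1.3/\sqrt{1.5}>1$, but $\sum_i\frac{\sqrt{a_i}}{\sqrt{a_i}-\sqrt{d_i}}(\sqrt{x_iy_i}-1)<-1$. The underlying error is that $\conv(Q^{\textup{relax}})\cap[0,1]^{2n}\supsetneq\conv(Q^{\textup{relax}}\cap[0,1]^{2n})=\conv(Q)$ in general: a certificate of membership in $\conv(Q^{\textup{relax}})$ may use points outside the box. Your step~(2) is actually easy (it follows from Proposition~\ref{prop:comparedtoCRT} with room to spare), which is a sign that the factor $4$ is not being earned where you think it is; the hard content of the theorem is precisely the step you have routed through the false claim.

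The paper avoids this by never trying to exhibit explicit convex combinations. It uses a duality-type fact for covering sets (Proposition~\ref{prop:eqposcone}): since both $Q$ (after convexification) and $R$ are of covering type, it suffices to show that for every nonnegative linear objective $(p,q)$ the minimum $z^l$ over $R\cap[0,1]^{2n}$ satisfies $z^*\le 4z^l$, where $z^*$ is the minimum over $Q$. This is carried out by (a) reducing $z^*$ to a minimum over the sets $Q_i$ using the extreme-point structure of $Q$ (Lemma~\ref{lem:zoptstruc}), (b) lower-bounding the per-pair cost by $(p_i+q_i)\alpha_i^2$ with $\alpha_i=\sqrt{x_iy_i}$ (Lemma~\ref{lem:termwiselower}), and (c) the key combinatorial observation that any point satisfying (\ref{eq:bilincoverineq}) has $\alpha_i<\tfrac12$ for at most one index $i$ (Lemma~\ref{lem:rangealpha}); the constant $4$ is exactly $(\tfrac12)^{-2}$ from squaring this threshold. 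If you want to salvage your outline, you would need to replace step~(1) by an argument of this LP-ratio type, since no pointwise lifting through $\conv(Q^{\textup{relax}})$ can work.
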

Since $R$ is a set of the covering type (that is, its recession cone is the non-negative orthant), we have that $4\cdot R \subseteq R$. 
The proof of Theorem~\ref{thm:strcovercon}, which is given in Section~\ref{section:strcovercon}, is based on optimizing linear functions with non-negative coefficients on $R$ and $Q$ and proving a bound of $4$ on the ratio of their optimal objective function values.



\subsection{Lifting the bilinear cover inequality}

We now follow the framework of Proposition~\ref{lm:seqind} to perform sequence-independent lifting of the bilinear cover inequality. 
The first step is to study the lifting function.  


\begin{restatable}{theorem}{ThmBound}
\label{thm:upperbound}
Let $\phi$ be the lifting function for valid inequality~(\ref{eq:bilincoverineq}). Define
\begin{equation}
\psi(\delta) := \left\{
\begin{array}{lrrll} 
l_+(\delta+\Delta) - 1\ & &\delta&\leq -\Delta \\
l_-\delta & -\Delta\leq & \delta & \leq 0 \\
l_+\delta & 0 \leq & \delta,& & 
\end{array}\right. 
\label{eq:upperbound}
\end{equation}
where $l_- = \frac{1}{\Delta}$ and where
$l_+=\frac{\sqrt{a_{i_0}}+\sqrt{d_{i_0}}}{\Delta \sqrt{d_{i_0}}}$
if $a_{i_0}$ exists and $l_+=\frac{1}{\Delta}$ otherwise.
Then 
\begin{enumerate}[label={(\roman*)},align=left]
\item $l_+\geq l_-> 0$, 
\item $\psi(\delta)$ is subadditive over $\Real$ with $\psi(0) = 0$, and 
\item $\phi(\delta) \le \psi(\delta)$ for $\delta \in \Real$.
\end{enumerate}
\end{restatable}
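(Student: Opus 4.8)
\textbf{Proof plan for Theorem~\ref{thm:upperbound}.}

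The plan is to establish the three assertions in order, treating (i) as an immediate computation, (ii) as a case-analysis on the piecewise-linear structure, and (iii) as the substantive part requiring an upper bound on $\phi$.

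First I would dispatch (i). If $a_{i_0}$ does not exist, then $l_+ = l_- = 1/\Delta > 0$ (recall $\Delta > 0$ from the minimal cover property), so equality holds. If $a_{i_0}$ exists, then $a_{i_0} > \Delta > 0$ and $d_{i_0} = a_{i_0} - \Delta > 0$, so $\sqrt{d_{i_0}}$ is real and positive; then $l_+ = \frac{\sqrt{a_{i_0}} + \sqrt{d_{i_0}}}{\Delta \sqrt{d_{i_0}}} = \frac{1}{\Delta}\left(1 + \sqrt{a_{i_0}/d_{i_0}}\right) \ge \frac{1}{\Delta} = l_-$, with strict inequality, and positivity is clear. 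So $l_+ \ge l_- > 0$ in all cases.

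Next, for (ii), I would use the standard fact that a piecewise-linear function on $\Real$ that is continuous, passes through the origin, and whose slopes are \emph{non-increasing} as $\delta$ increases through each breakpoint is concave; here $\psi$ has slopes $l_+$ (for $\delta \le -\Delta$), then $l_-$ (for $-\Delta \le \delta \le 0$), then $l_+$ again (for $\delta \ge 0$), so it is \emph{not} globally concave, and subadditivity must be argued directly. The key observations are: $\psi$ is continuous (check the two breakpoints $\delta = -\Delta$ and $\delta = 0$: at $\delta = 0$ both pieces give $0$; at $\delta = -\Delta$ the middle piece gives $l_-(-\Delta) = -1$ and the left piece gives $l_+ \cdot 0 - 1 = -1$), and $\psi(0) = 0$. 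For subadditivity I would verify $\psi(\delta_1 + \delta_2) \le \psi(\delta_1) + \psi(\delta_2)$ by splitting into cases according to which of the three linear pieces each of $\delta_1$, $\delta_2$, and $\delta_1+\delta_2$ falls into. The informative cases are those where the ``dip'' of slope $l_- < l_+$ on $[-\Delta,0]$ could help the right-hand side; one shows that because the shallow-slope piece has length exactly $\Delta$ and the jump of $\psi$ relative to the pure-slope-$l_+$ line it interpolates is exactly the constant $-1$ on $(-\infty,-\Delta]$, the function $\psi$ satisfies $\psi(\delta) \le l_+ \delta$ for $\delta \ge 0$ (equality), $\psi(\delta) \le l_+(\delta + \Delta) - 1$ for all $\delta$ (since on $[-\Delta,\infty)$ this upper line has slope $l_+ \ge l_-$), i.e.\ $\psi$ lies below both $l_+\delta$ and a shifted copy; combining such linear majorants across the split of $\delta_1+\delta_2$ gives the inequality. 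A cleaner route is to note $\psi(\delta) = \min\{\, l_+\delta,\ \max\{l_-\delta,\ l_+\delta - (\text{const})\}\,\}$-type expression, or simply to observe that $\psi = \min\{\sigma_1,\sigma_2\}$ where $\sigma_1(\delta) = l_+\delta$ and $\sigma_2(\delta) = l_-\delta$ for $\delta \le 0$, $\sigma_2(\delta) = \ldots$ — rather than chase the slickest formulation here, in the write-up I would just run the finite case-check, which is elementary.

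The main obstacle is (iii): $\phi(\delta) \le \psi(\delta)$ for all $\delta \in \Real$. Here I would first compute or bound $\phi$ from its definition (Definition~\ref{def:liftingfunction}). With $h$ the left-hand side of~(\ref{eq:bilincoverineq}) and $r = -1$, the lifting function is
\[
\phi(\delta) = \max\Bigl\{\, {-1} - \sum_{i\in I}\tfrac{\sqrt{a_i}}{\sqrt{a_i}-\sqrt{d_i}}(\sqrt{x_iy_i}-1)\ \Bigm|\ \sum_{i\in I} a_i x_i y_i \ge d^\Lambda - \delta,\ (x_I,y_I)\in[0,1]^{2|I|}\,\Bigr\},
\]
and since the objective depends on $(x_i,y_i)$ only through $t_i := \sqrt{x_i y_i} \in [0,1]$ and the constraint reads $\sum_i a_i t_i^2 \ge d^\Lambda - \delta$, this is a one-dimensional-per-coordinate concave maximization: maximizing $-\sum_i c_i(t_i - 1)$ with $c_i = \sqrt{a_i}/(\sqrt{a_i}-\sqrt{d_i}) > 0$ subject to $\sum a_i t_i^2 \ge (d^\Lambda) - \delta$ over the box. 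I would analyze three regimes of $\delta$. For $\delta \ge 0$: the constraint is slack at $t = \mathbf{1}$ when $\delta \ge \Delta$ (giving $\phi = 0$) — wait, more carefully, $d^\Lambda - \delta \le \sum_i a_i$ iff $\delta \ge -\Delta$, so for $\delta \ge -\Delta$ the point $t=\mathbf 1$ is feasible and $\phi(\delta) \ge 0$; the real content is the upper bound. The strategy for the upper bound on $\phi$ is to dualize/relax the single knapsack-type constraint: for any multiplier, $\phi(\delta) \le \max_{t\in[0,1]^{|I|}} \bigl[ -\sum c_i(t_i-1) + \lambda(\sum a_i t_i^2 - d^\Lambda + \delta) \bigr]$ is false in general since that max is $+\infty$ unless one is careful, so instead I would directly solve the inner problem: at optimality we push the $t_i$ down from $1$ (to decrease the objective-penalty $-c_i(t_i-1)$, which \emph{increases} the objective) subject to keeping $\sum a_i t_i^2 \ge d^\Lambda - \delta$; so $\phi(\delta)$ equals the optimal value of \emph{minimizing} $\sum c_i(1 - t_i)$ with $t_i \in [0,1]$, $\sum a_i t_i^2 \ge d^\Lambda - \delta$. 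For $\delta \in [-\Delta,0]$ this is a small perturbation away from the all-ones point and, using that $a_i$ form a minimal cover of $d^\Lambda$ (so removing any one index violates the constraint), the optimal solution drives down a single coordinate — whichever $i$ gives the best ratio of objective-decrease to constraint-slack-consumed — and the resulting value is a concave function of $\delta$ bounded above by the line $l_-\delta$; the choice of $i_0$ (minimizing $a_i$ among $a_i > \Delta$) is exactly what governs the slope $l_+$ in the regime where a coordinate hits its lower bound $0$ and a second coordinate must move, i.e.\ $\delta \le -\Delta$. The crux is therefore a careful KKT/extreme-point analysis of this separable convex program, showing the optimal value is exactly piecewise of the stated slopes \emph{at the relevant boundary points} and globally $\le \psi$ because $\psi$ interpolates these boundary values linearly while $\phi$, being an optimal value of a maximization over an enlarging feasible set, is convex-ish only in pieces — more precisely, $\phi$ is concave on $[-\Delta, \infty)$ (the feasible set grows as $\delta$ grows, value of a max grows, but one checks concavity from the structure), so on $[-\Delta,0]$ it lies below the chord through its endpoint values $\phi(-\Delta)$ and $\phi(0)=0$, which is precisely $l_-\delta$ once we show $\phi(-\Delta) = -1$; similarly $\phi(\delta) \le l_+\delta$ on $[0,\infty)$ follows from computing the optimal ``best single coordinate'' rate near $t=\mathbf 1$ and a convexity argument, and for $\delta \le -\Delta$ one peels off the first saturated coordinate and recurses. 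I expect the bookkeeping around which coordinate saturates first and the verification that $d_{i_0}/a_{i_0}$-type quantities produce exactly $l_+$ to be the delicate, calculation-heavy part; everything else is structural.
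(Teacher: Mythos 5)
Parts (i) and (ii) of your plan are fine and essentially match the paper: (i) is the same direct computation, and the paper's Lemma~\ref{lm:subadditive} proves subadditivity by exactly the finite case-check you defer to the write-up.

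The gaps are in part (iii). First, your central structural claim --- that $\phi$ is concave on $[-\Delta,\infty)$, from which you deduce that it lies below the chord $l_-\delta$ on $[-\Delta,0]$ --- is both false and logically inverted: a concave function lies \emph{above} its chords, so even if the claim held it would give the wrong inequality. The fact you actually need, and the one the paper proves (Lemma~\ref{lm:boundneg}), is that on $[-\Delta,0]$ the optimum moves a single coordinate, giving $\phi(\delta)=\frac{\sqrt{a_n}-\sqrt{a_n-\Delta-\delta}}{\sqrt{a_n}-\sqrt{a_n-\Delta}}-1$, which is \emph{convex} in $\delta$ and hence below the chord joining $\phi(-\Delta)=-1$ to $\phi(0)=0$. (Lemma~\ref{lm:fractional-concave} and Figure~\ref{fig:sub} show $\phi$ is locally convex, never concave, wherever a coordinate is fractional; this is precisely why $\phi$ fails to be subadditive and a majorant $\psi$ is needed at all.) Second, you misplace the regime where several coordinates move: for $\delta\le-\Delta$ the problem is infeasible and $\phi=-\infty$, so there is nothing to ``peel off and recurse'' there; the multi-coordinate analysis, and the role of $i_0$, belong to \emph{large positive} $\delta$, where whole pairs can be driven to $x_iy_i=0$. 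Third, and most substantively, the bound $\phi(\delta)\le l_+\delta$ for all $\delta\ge 0$ is the hard part of the theorem, and your plan offers no mechanism for it beyond ``a convexity argument,'' which cannot work because $\phi$ is neither convex nor concave on $[0,\infty)$. The paper needs a dedicated tool here: Proposition~\ref{prop:minimal-slope} shows that local maximizers of $\phi(\delta)/\delta$ occur only where every $x_iy_i\in\{0,1\}$, reducing the verification to the finitely many points $\delta^S=\sum_{i\in S}a_i-\Delta$, at which a term-by-term estimate using the monotonicity of $(\sqrt{a}+\sqrt{a-\Delta})/\sqrt{a-\Delta}$ (Lemma~\ref{lm:decreasing}) yields $\phi(\delta^S)\le l_+\delta^S$. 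Without something playing that role, your proof of (iii) does not close. (Minor: you also write ``minimizing $\sum c_i(1-t_i)$'' where you mean maximizing it, i.e., minimizing $\sum c_i t_i$.)
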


Although computing the lifting function for an arbitrary valid inequality, in general, appears to be a difficult task, the bilinear cover inequality (\ref{eq:bilincoverineq}) has sufficient structure that we can derive a strong subadditive upper bound in Theorem~\ref{thm:upperbound}. 
The key to proving Theorem~\ref{thm:upperbound}, as we show in Section~\ref{section:upperbound}, is to first obtain the lifting function exactly in a region around the origin, and to argue that the linear upper bound of the lifting function for this region upper bounds the lifting function globally.
Figure~\ref{fig:sub} presents examples of the lifting function $\phi$, and the upper bound $\psi$ we derived in Theorem~\ref{thm:upperbound} for the cases when $a_{i_0}$ exists and for the case when it  does not. 
\begin{figure}
    \centering
         \begin{subfigure}[b]{0.45\textwidth}
         \centering
         \includegraphics[width=\textwidth]{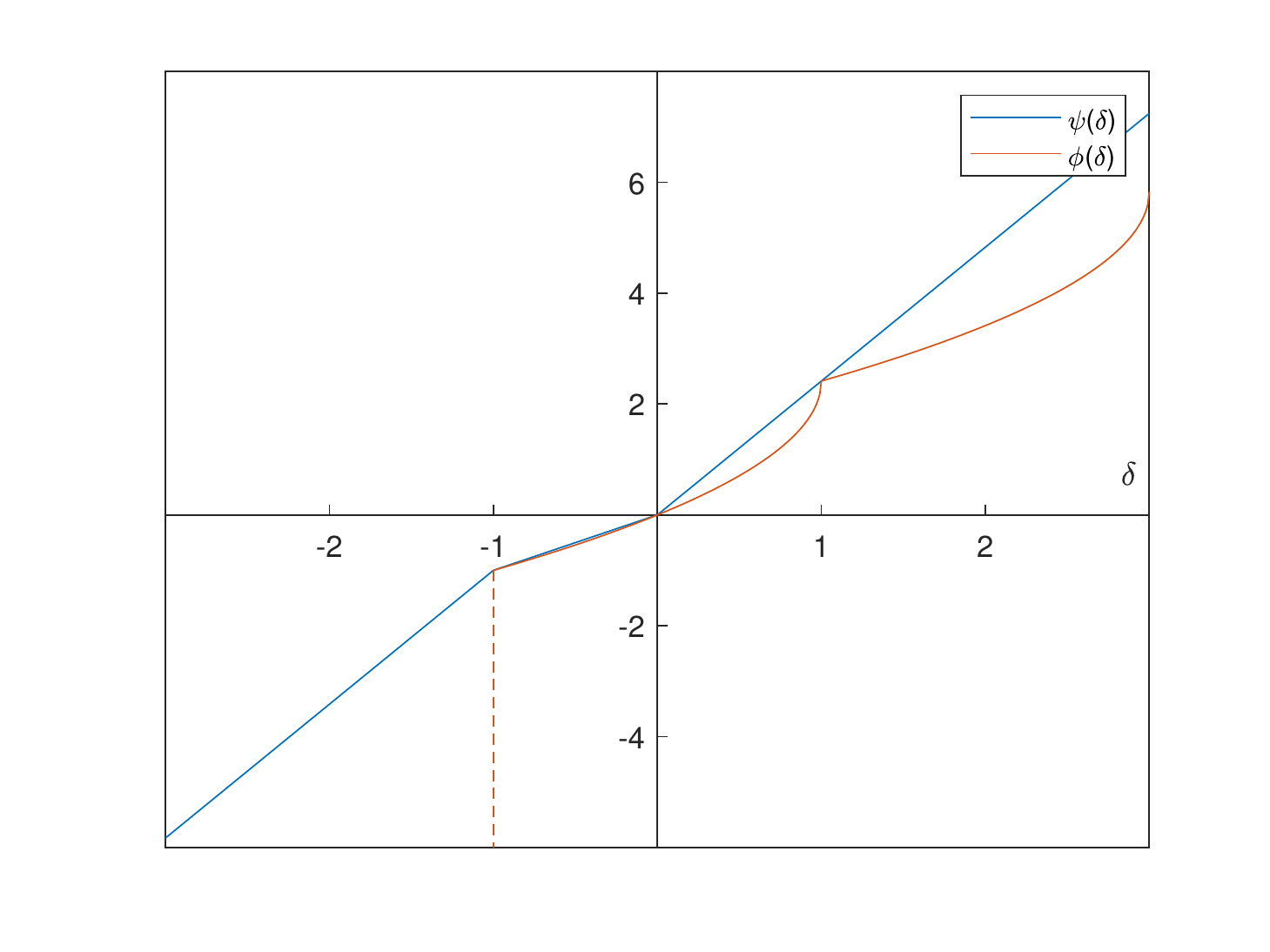}
         \caption{$a_i=2$, $\Delta = 1$}
     \end{subfigure}
        \begin{subfigure}[b]{0.45\textwidth}
        \centering
        \includegraphics[width=\textwidth]{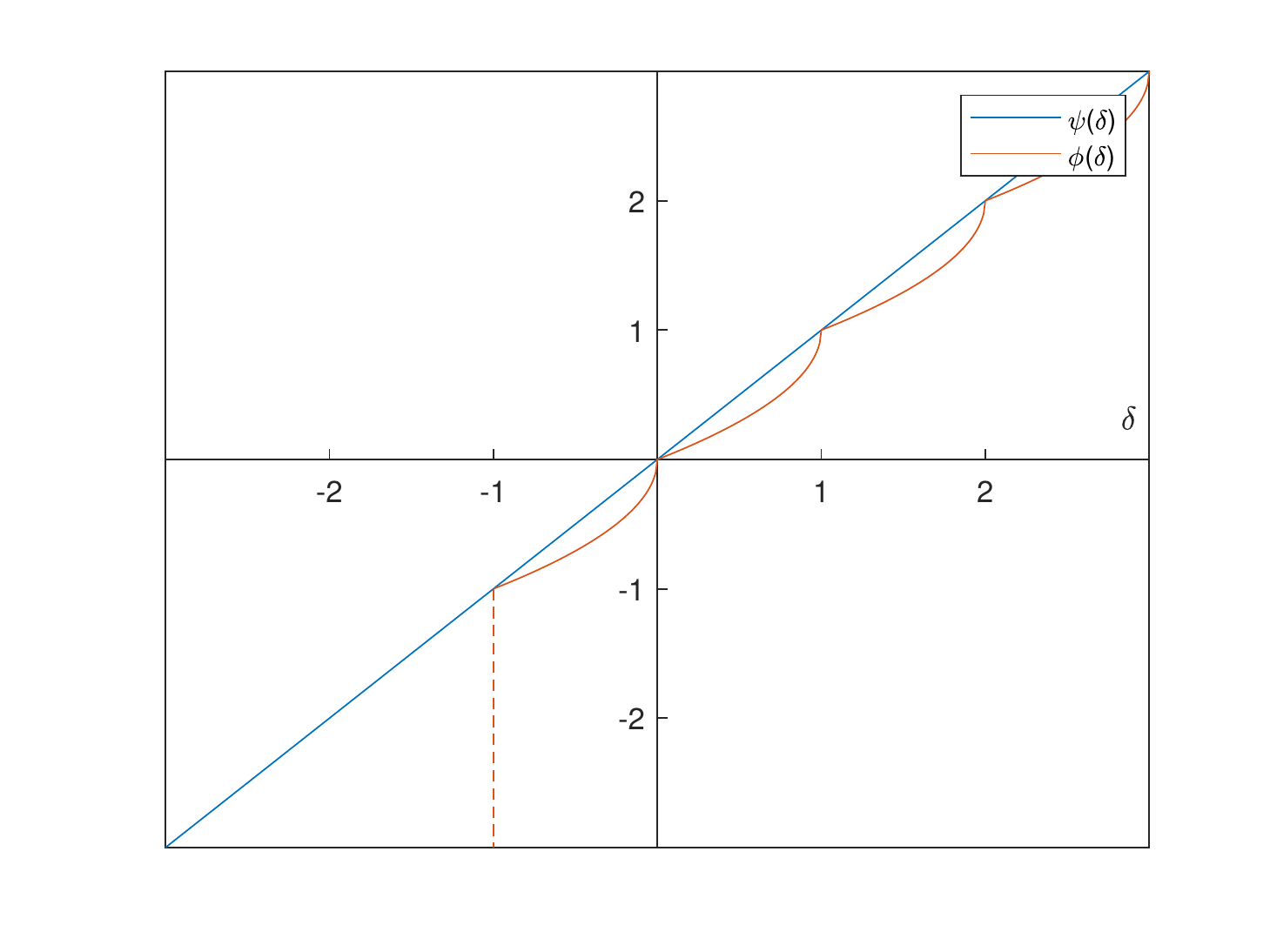}
        \caption{$a_i=1$, $\Delta = 1$}
        \end{subfigure}
        \caption{Lifting function $\phi(\delta)$ in red and subadditive upper bound $\psi(\delta)$ in blue}
        \label{fig:sub}
\end{figure}

We observe in Figure~\ref{fig:sub} that the lifting function is not subadditive since it is convex in a neighborhood of the origin. 
Therefore,  building a subadditive approximation is required to achieve sequence-independent lifting.

Building on the subadditive upper bound obtained in Theorem~\ref{thm:upperbound}, we are now able to lift the bilinear cover inequality in a sequence-independent manner.
\begin{restatable}{theorem}{ThmLifted}
\label{thm:lifted}

Consider the separable bipartite bilinear set presented in Definition~\ref{def:separable}. 
Let $\Lambda = \{I, J_0, J_1\}$ be a minimal cover yielding partition and let $\Delta, a_{\i0}, d_i, l_{+}, l_{-}$ be defined as in Theorems~\ref{thm:valid} and~\ref{thm:upperbound} (We clarify that they are calculated using $d^\Lambda$ instead of $d$). 
Let $J_0^+$, $J_0^-$, $J_1^+$, and, $J_1^-$ be as in Definition~\ref{def:minimal}. 
Then inequality 
\begin{eqnarray}
\label{eq:liftedbilinearcoverinequality}
\sum_{i \in I} \frac{\sqrt{a_i}}{ \sqrt{a_i}-\sqrt{d_i}}\left( \sqrt{x_iy_i} - 1\right) + \sum_{i\notin I} \gamma_i (x_i, y_i) \geq -1,
\end{eqnarray}
is valid for $Q$ where 
$\gamma_i: \Real^2 \rightarrow \Real$ for $i \in [n]\setminus I$ are the concave functions:
\begin{enumerate}[label={(\roman*)},align=left]
    \item $\gamma_i(x,y) = l_+a_i\min\{x,y\}$ for $i\in J_0^+$;
    \item $\gamma_i(x,y) = -l_+a_i\min\{2-x-y, 1\}$ for $i\in J_1^-$;
    \item $\gamma_i(x,y) = \min\{l_-a_i(x+y-1), l_+a_i(x+y-1)+l_+\Delta-1, 0\}$ for $i\in J_0^-$;
    \item 
    $\gamma_i(x,y) = \min\{\tilde{g}_i(x,y),\tilde{h}_i(x,y), g_i(x,y), h_i(x,y)\},$
    for $i\in J_1^+$ with $a_i\geq a_{\i0}$ when $I^>\neq\emptyset$, 
    and
    $\gamma_i(x,y) = \min\{\tilde{g}_i(x,y),\tilde{h}_i(x,y)\}$ in all other cases where 
    $i\in J_1^+$, 
    with
    \begin{align*}
        \tilde{g}_i(x,y)&=l_+a_i(\min\{x, y\}-1)+l_+\Delta-1\\
        \tilde{h}_i(x,y)&=l_-a_i(\min\{x, y\}-1)\\
        g_i(x,y)&= \sqrt{a_i - \Delta}\sqrt{a_i}l_+\sqrt{xy} -l_+(a_i-\Delta) -1 \\
        h_i(x,y)&= \frac{\sqrt{a_i}}{\sqrt{a_i} - \sqrt{d_i}}(\sqrt{xy} - 1).
    \end{align*}
\end{enumerate}
\end{restatable}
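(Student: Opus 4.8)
The plan is to apply Proposition~\ref{lm:seqind} with the seed inequality being the bilinear cover inequality~(\ref{eq:bilincoverineq}) written on the coordinates $I$ (with right-hand-side data $d^\Lambda$), with $\psi$ being the two-slope subadditive function from Theorem~\ref{thm:upperbound}, and with the $\gamma_i$ being the functions listed in items (i)--(iv). By Theorem~\ref{thm:upperbound}, hypotheses \ref{item:seqind:1} and \ref{item:seqind:2} of Proposition~\ref{lm:seqind} are already established. So the entire burden of the proof is to verify, for each of the four index classes $J_0^+$, $J_1^-$, $J_0^-$, $J_1^+$, that the proposed $\gamma_i$ is (a) concave on $[0,1]^2$ and (b) dominates the relevant shifted composition of $\psi$: namely $\gamma_i(x,y)\ge\psi(a_ixy)$ on $[0,1]^2$ for $i\in J_0$ (condition \ref{item:seqind:3}), and $\gamma_i(x,y)\ge\psi(a_ixy-a_i)$ on $[0,1]^2$ for $i\in J_1$ (condition \ref{item:seqind:4}). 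Once this is done for all four classes, Proposition~\ref{lm:seqind} immediately yields that~(\ref{eq:liftedbilinearcoverinequality}) is valid and convex for $Q$.

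First I would record some elementary facts that get reused: on $[0,1]^2$ the function $xy$ ranges over $[0,1]$; $\min\{x,y\}$ is concave and satisfies $xy\le\min\{x,y\}$ with equality on the boundary where one variable is $0$ or $1$; similarly $2-x-y\ge0$ and $a_ixy-a_i\le0$ for $a_i>0$, and $xy-(x+y-1)=(1-x)(1-y)\ge0$, so $x+y-1\le xy\le\min\{x,y\}$. Also $\min$ of concave functions is concave, and $\sqrt{xy}$ is concave on the nonnegative orthant, which handles concavity of $g_i$, $h_i$ and of the pointwise minima in items (iii) and (iv); concavity of items (i) and (ii) follows from concavity of $\min\{x,y\}$ and $\min\{2-x-y,1\}$ together with the sign of the coefficient (note $-l_+a_i<0$ in (ii) multiplies a convex-looking expression $\min\{2-x-y,1\}$, but $\min\{2-x-y,1\}$ is concave, so $-l_+a_i$ times it is convex --- I need to double-check this and it is likely that the correct reading is that $-\min\{2-x-y,1\}=\max\{x+y-2,-1\}$ is convex, hence... actually $\gamma_i$ here should come out concave, so the intended expression is concave; I would verify that $-\min\{2-x-y,1\}$ is indeed the concave branch-minimum of the two affine pieces $x+y-2$ and $-1$, which is $\min$ of two affine functions, hence concave). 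I flag that this sign/concavity bookkeeping is the kind of routine-but-error-prone step that must be done carefully for each item.

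The substantive work is the domination inequalities, and I would organize it by tracking which linear piece of $\psi$ is active. For $i\in J_0^+$ ($a_i>0$): the argument $a_ixy\ge0$ always lands in the third branch, so $\psi(a_ixy)=l_+a_ixy$, and since $xy\le\min\{x,y\}$ and $l_+a_i>0$ we get $l_+a_i\min\{x,y\}\ge l_+a_ixy=\psi(a_ixy)$ --- this is the easy case. For $i\in J_1^-$ ($a_i<0$): the argument is $a_ixy-a_i=a_i(xy-1)=|a_i|(1-xy)\ge0$, again the third branch, $\psi=l_+|a_i|(1-xy)=-l_+a_i(1-xy)$; here I need $-l_+a_i\min\{2-x-y,1\}\ge -l_+a_i(1-xy)$, i.e. (dividing by $-l_+a_i>0$ since $a_i<0$) $\min\{2-x-y,1\}\le 1-xy$, which holds because $2-x-y=(1-x)+(1-y)\ge(1-x)(1-y)=1-x-y+xy$ rearranges to $1\ge xy$... let me instead note $1-xy\ge1-\min\{x,y\}\ge$ both $1-x$ and $1-y$, hmm; the cleanest check is $2-x-y\ge1-xy\iff 1-x-y+xy\ge0\iff(1-x)(1-y)\ge0$, true, and also $1\ge 1-xy$ trivially, so $\min\{2-x-y,1\}\le 1-xy$ wait that's backwards --- I would carefully re-derive: we need the $\gamma$ (which is $\le$ something) to be $\ge\psi$, and get the direction right. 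For $i\in J_0^-$ and $i\in J_1^+$, the argument $a_ixy$ (resp. $a_ixy-a_i$) can be negative enough to cross $-\Delta$ and enter the first branch of $\psi$, so $\psi$ is piecewise with three (resp. two relevant) pieces; the $\gamma_i$ are correspondingly written as a $\min$ over the linear upper envelopes of those pieces (the ``$0$'' term in (iii) capturing the fact that $\psi\le0$ only partway and the chord back up to $0$, etc.), plus in item (iv) the extra nonlinear branches $g_i,h_i$ that come from the fact that when $a_i\ge a_{\i0}$ one can do better using the actual curved shape of the lifting function rather than its linear overestimate. For each piece I would (1) identify the subregion of $[0,1]^2$ where that piece of $\psi$ is active, expressed via thresholds on $xy$, and (2) show the corresponding branch of $\gamma_i$ dominates $\psi$ there --- typically because $\gamma_i$'s branch is the affine function matching $\psi$'s slope with a concave substitution ($xy\mapsto\min\{x,y\}$ or $xy\mapsto x+y-1$ or keeping $\sqrt{xy}$) and the substitution moves in the favorable direction given the sign of the slope. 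The main obstacle, and where I expect the real case analysis, is item (iv) for $J_1^+$ with $a_i\ge a_{\i0}$: here four functions are being intersected, the definitions of $g_i,h_i$ directly echo the nonlinear part of the bilinear cover inequality and of the exact lifting function near the origin (from the proof of Theorem~\ref{thm:upperbound}), and showing $\min\{\tilde g_i,\tilde h_i,g_i,h_i\}\ge\psi(a_ixy-a_i)$ will require splitting $[0,1]^2$ into regions by $xy$ and checking, in each, that whichever of the four is smallest still lies above the active linear piece of $\psi$ --- this is essentially a one-variable inequality in $t=xy$ (or $\sqrt{t}$) on $[0,1]$ for each region, but getting the breakpoints and the $a_{\i0}$ condition to line up is the delicate part. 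I would also make sure to check the boundary/degenerate situations ($I^>=\emptyset$ so $a_{\i0}$ does not exist and $l_+=1/\Delta=l_-$, collapsing $\psi$ to a single slope and simplifying every $\gamma_i$), and that the claimed concavity of each final $\gamma_i$ survives the $\min$ over possibly curved and affine pieces.
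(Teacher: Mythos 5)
Your plan is exactly the paper's proof: invoke Proposition~\ref{lm:seqind} with the bilinear cover inequality as seed and the two-slope $\psi$ of Theorem~\ref{thm:upperbound}, so that everything reduces to checking $\gamma_i(x,y)\ge\psi(a_ixy)$ for $i\in J_0$ and $\gamma_i(x,y)\ge\psi(a_i(xy-1))$ for $i\in J_1$ on $[0,1]^2$, which the paper then does case by case using the envelope substitutions $x+y-1\le xy\le\min\{x,y\}$ and the representation $\psi(\delta)=\min\{l_-\delta,\,l_+(\delta+\Delta)-1\}$ for $\delta\le 0$. Your treatments of $J_0^+$, $J_0^-$ and $J_1^-$ are correct (the sign check you flagged in $J_1^-$ resolves the right way: one needs $\min\{2-x-y,1\}\ge 1-xy$, which is $(1-x)(1-y)\ge0$).

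The one place where your proposal stops short of a proof is item (iv) for $i\in J_1^+$ with $a_i\ge a_{\i0}$: you correctly predict that it reduces to one-variable inequalities in $t=\sqrt{xy}$ split at the breakpoint $\sqrt{1-\Delta/a_i}$ of $\psi(a_it^2-a_i)$, but you do not supply the actual comparisons. The paper's argument there is: on each of the two subintervals the dominating branch of $\psi$ is affine in $t^2$ while $g_i$ and $h_i$ are concave in $t$, so it suffices to check the two endpoints; both endpoint checks come down to the single inequality $l_+\ge 1/\bigl(\sqrt{a_i-\Delta}\,(\sqrt{a_i}-\sqrt{a_i-\Delta})\bigr)$, which is equivalent to $(\sqrt{a_{\i0}}+\sqrt{a_{\i0}-\Delta})/\sqrt{a_{\i0}-\Delta}\ge(\sqrt{a_i}+\sqrt{a_i-\Delta})/\sqrt{a_i-\Delta}$ and hence is exactly where the hypothesis $a_i\ge a_{\i0}$ is consumed. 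Identifying that endpoint inequality (and that it is the sole reason for the $a_{\i0}$ restriction) is the substantive content missing from your sketch; the rest is as you describe.
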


We refer to inequality (\ref{eq:liftedbilinearcoverinequality}) as  \textit{lifted bilinear cover inequality}. 
{This inequality  is second-order cone representable.}
The proof of Theorem~\ref{thm:lifted} can be found in Section~\ref{section:lifted}. 
\section{Future directions}\label{sec:future}

The results presented in this paper open up new avenues for generating cutting-planes for QCQPs. 
They also raise new theoretical and computational questions that can be investigated. 
To illustrate this assertion, we revisit Example~\ref{example:nolifting}  next.

\begin{example}\label{ex:twocont}
Consider $S:= \{ (x_1,x_2) \in [0,1]^2 \,|\, - x_1^2 - (x_2-0.5)^2\geq -0.5^2\}$ with the same fixing as in Example~\ref{example:nolifting}, \textit{i.e.}, $x_2 =0$.
For the associated restriction $S|_{x_2=0}$, consider the seed inequality $-x_1 \geq 0$. 

In contrast to our earlier discussion,  consider now the problem of lifting this seed inequality into an inequality of the form $-x_1 + \alpha \sqrt{x_2} \geq 0$. 
Finding the values of $\alpha$ that generate a valid inequality is equivalent to solving the problem
\begin{align*}
    \alpha^*:= \sup\ \left\{ \frac{x_1}{\sqrt{x_2}} \ \Bigm| \
     -x_1^2 - (x_2-0.5)^2\geq -0.5^2, \ 
     x_1 \in [0,1],\  x_2 \in (0, 1] \right\}.
\end{align*}

Using constraint $-x_1^2 - (x_2-0.5)^2\geq -0.5^2$ we can bound the objective function as:
$
\frac{x_1}{\sqrt{x_2}} \leq \frac{\sqrt{0.5^2 - (x_2-0.5)^2}}{\sqrt{x_2}} 
=  \frac{\sqrt{(1 - x_2)(x_2)}}{\sqrt{x_2}} = \sqrt{1 - x_2}.
$
It follows that selecting $\alpha \ge \alpha^* = 1$ yields a valid inequality for $S$. Note first that $\alpha <0$ leads to an invalid inequality since $(x_1,x_2)=(0, 0.5)$ is a feasible point. 
Moreover, any $\alpha \in [0,1)$
yields an invalid inequality, since the point $(x_1,x_2)$ where $x_1 = \sqrt{x_2(1-x_2)}$ and $x_2 = 1-(\sfrac{(1+\alpha)}{2})^2$ is feasible.
Therefore, the inequality
$-x_1 + \sqrt{x_2} \geq 0$ 
is the strongest such lifted inequality.
\end{example}
The above example raises the question of obtaining a complete characterization of when one can accomplish lifting, \textit{i.e.}, of generalizing Theorem~\ref{THM_EXISTENCE} to situations where the functional form of the lifted variable is not necessarily linear. 
It would also be valuable to develop a theory to accomplish sequence-independent lifting in the more general case of bipartite bilinear programs, instead of just the separable case. 
On the computational side, one key question is to understand the complexity of separating the lifted bilinear cover inequality presented in Theorem~\ref{thm:lifted} and to design efficient computational schemes to perform separation. 
Finally, extensive numerical experiments should be conducted to understand the practical strength of these inequalities and to determine how useful they can be in the solution of QCQPs. 
Given the strength of the seed inequality, we are hopeful that these lifted inequalities could yield nontrivial dual bound improvements. 
\section{Proof of Theorem~\ref{thm:existence}}
\label{section:existence}
\begin{proof}[Theorem~\ref{thm:existence}]
Without loss of generality, we assume that we lift a component of the variable $x$, say $x_k$ with $k\in [m]\backslash C$. In addition, we assume $\tilde{x}_k=0$; if not we may perform the operation $x_k\leftarrow 1-x_k$ and $f_k\leftarrow -f_k$. 

In order to find a lifting coefficient, We examine the following optimization problem
\begin{eqnarray*}
u_k(x_k):=\frac{1}{x_k}&\max &r - h(x_C, y_D)\\
&\st & x\tr Q y + a\tr x + b\tr y \geq c,\\
&& x_C, y_D\in [0,1],\ x_{[m]\backslash C\backslash\{k\}}=\tilde{x}_{[m]\backslash C\backslash\{k\}}, y_{[n]\backslash D}=\tilde{y}_{[n]\backslash D}.
\end{eqnarray*}
Now note that $u_k^* = \sup_{x_k\in(0,1]} u_k(x_k)$, assuming it exists, is a valid the coefficient for lifting, \textit{i.e.}, 
$
h(x_C, y_D) + u_k^* x_k \geq r
$
is a valid lifted inequality. 
Any coefficient larger than $u_k^*$ is also valid for lifting.

From the concavity of $h$ (\textit{i.e.}, convexity of $r-h$), for any specific $x_k$ the optimal solution must be an extreme point. 
According to~\cite{dey2019new}, all extreme points satisfy the following property: except one pair of $(x_i, y_j)$, all other $x_{i'}, y_{j'}$ pairs will be equal to either $0$ or $1$. 
Thus, for any pair of partitions $\{i\}\cup I_0\cup I_1 = C$ (denoted by $I$) and $\{j\}\cup J_0\cup J_1 = D$ (denoted by $J$), define
\begin{eqnarray*}
u_{I,J}(x_k):=\frac{1}{x_k}&\max &r - h(x_C, y_D)\\
&\st & x\tr Q y + a\tr x + b\tr y \geq c,\\
&& x_i,y_j\in [0,1],\ x_{I_0} = 0, x_{I_1} = 1, y_{J_0} = 0, y_{J_1} =1,\\
&& x_{[m]\backslash C\backslash\{k\}}=\tilde{x}_{[m]\backslash C\backslash\{k\}}, y_{[n]\backslash D}=\tilde{y}_{[n]\backslash D}.
\end{eqnarray*}
We clearly have $u_k(x_k)=\max_{I,J} u_{I,J}(x_k)$. 
In addition, observe that $u_k^*=\max_{I,J} u_{I,J}^*$ where $u_{I,J}^* = \sup_{x_k\in(0,1]} u_{I,J}(x_k)$. 
Therefore in order to prove that $u_k^*<\infty$, it is sufficient to show that for any partition $I,J$, $u_{I,J}^*<\infty$. 
Therefore, we now focus on one instance of such partitions.

{
We define $\tilde{x} \in \mathbb{R}^m$ and $\tilde{y} \in \mathbb{R}^n$ as: $(\tilde{x}_{I,J})_{I_0\cup\{i\}} = 0$, $
(\tilde{x}_{I,J})_{I_1}=1$, $ (\tilde{y}_{I,J})_{J_0\cup\{j\}} = 0$, $(\tilde{y}_{I,J})_{J_1} = 1.$
In addition, define $r_{I,J} := r-p_{I_1}\tr \ONE_{I_1}-q_{J_1}\tr \ONE_{J_1}$, $c_{I,J}:= c-a\tr \tilde{x}_{I,J}-b\tr \tilde{y}_{I,J}-\tilde{x}_{I,J}\tr Q \tilde{y}_{I,J}$, $a_{I,J}:=a_i+ Q_{i, *} \tilde{y}_{I,J}
$, $b_{I,J} := b_j + \tilde{x}_{I,J}\tr Q_{*, j}$, and $a_{I,J,k} := a_k + Q_{k, *}\tilde{y}_{I,J}$
so that we have equivalently}
\begin{eqnarray*}
u_{I,J}(x_k)=\frac{1}{x_k}&\max &r_{I,J} - h_{I,J}(x_i, y_j)\\
&\st & q_{ij}x_iy_j + a_{I,J} x_i + b_{I,J}y_j + a_{I,J,k}x_k + q_{kj} x_k y_j \geq c_{I,J},\\
&& (x_i,y_j) \in [0,1]^2,
\end{eqnarray*}
where $h_{I,J}$ is $h$ after the appropriate restriction. Note that $h_{I,J}$ is concave. As we are focusing on the pair of partitions $I,J$, for simplicity we rewrite the problem as
\begin{eqnarray*}
u(\hat{x}):=\frac{1}{\hat{x}}&\max_{x,y} &r - h(x, y)\\
&\st & qxy + ax+by+\hat{a}\hat{x} + \hat{q} \hat{x}y \geq c,\ (x,y)\in[0,1]^2,
\end{eqnarray*}
and $u^*:=\sup_{\hat{x}\in(0,1]} u(\hat{x})$. 
It remains to prove $u^* < \infty$.

For any $\epsilon \in (0,1]$ and $\hat{x}\in [\epsilon,1]$, we have
\begin{eqnarray*}
u(\hat{x})= &\max_{x,y} &\{ \ \frac{1}{\hat{x}}(r - h(x, y))\
\bigm| \ qxy + ax+by+\hat{a}\hat{x} + \hat{q} \hat{x}y \geq c,
\ (x,y)\in[0,1]^2 \ \},\\
\leq &\max_{x,y} &\{ \ \frac{1}{\hat{x}}(r - h(x, y)) \ \bigm| \
 (x,y)\in[0,1]^2 \ \} \\
\leq &\max_{x,y} &\left\{\ \max\{\frac{1}{\epsilon}(r - h(x, y)),(r - h(x, y))\}\ \bigm| \  (x,y)\in [0,1]^2 \ \right\}\\
:=& w <  \infty.
\end{eqnarray*}
It is clear that $u(\hat{x})\leq w<\infty$ for any $\hat{x}\in [\epsilon,1]$. Therefore, to show that $u^*<\infty$, it is sufficient to show that $\limsup_{\hat{x}\downarrow 0}u(\hat{x})<\infty$.
We define
\begin{subequations}\label{sub*}
\renewcommand{\theequation}{\arabic{parentequation}.\arabic{equation}}
\begin{align}
    v(\hat{x}) = \max\ & r - h(x, y) \nonumber \\
\st\ &  x\geq 0, \label{sub1*} \\
& x\leq 1, \label{sub2*}\\
& y\geq 0, \label{sub3*}\\
& y\leq 1, \label{sub4*}\\
&qxy + ax+by+\hat{a}\hat{x} + \hat{q} \hat{x}y \geq c. \label{sub5*}
\end{align}
\end{subequations}
Denote the feasible region of \eqref{sub*} as $S(\hat{x})$.
Since $v(0) \leq 0$ {(because the seed inequality is assumed to be valid for the restriction)}, one can prove that $\limsup_{\hat{x}\downarrow 0}u(\hat{x}) <\infty$ by showing that there exists $l<\infty$ such that $$v(\hat{x})-v(0)\leq l \hat{x}+o(\hat{x}) \textup{ for }\hat{x}\downarrow 0.
\footnote{This is equivalent to saying $\limsup_{\hat{x}\downarrow 0} \frac{v(\hat{x}) - v(0)}{\hat{x}} \leq l$ }$$ We denote the feasible region of the above problem as $S(\hat{x})$.

For $i \in \{1,\ldots,5\}$, we define $v_i(\hat{x})$ to be the optimal value of \eqref{sub*} where constraint $(\ref{sub*}.i)$ is at to equality. 
We use $S_i(\hat{x})$ to denote the corresponding feasible region.  
For example, 
\begin{eqnarray*}
v_a(\hat{x}) = &\max & \{ \ r - h(x, y) \ \bigm| \ (x,y)\in S_1(\hat{x}) \ \} \\
 = &\max & \{ \ r - h(x, y) \ \bigm|  \  x=0,\ (x,y) \in S(\hat{x}) \ \} \\
= &\max & \{ \ r - h(0, y) \ \bigm| \ by+\hat{a}\hat{x} + \hat{q} \hat{x}y \geq c,\  y\in[0,1] \ \}.
\end{eqnarray*}

Note that $v(\hat{x}) = \textup{max}_{i \in \{1, \dots, 5\}}\{v_i(\hat{x})\}$, 
since the objective function in computing $v(\hat{x})$ is maximizing a convex function, implying that there exists an optimal solution where at least one of the constraints \eqref{sub1*}-\eqref{sub5*} is active.

Thus, to prove that $\limsup_{\hat{x}\downarrow 0}u(\hat{x}) <\infty$ it suffices to show that there exists $l < \infty$ such that
\begin{eqnarray}\label{eq:toproveforthem1}
v_i(\hat{x})-v(0)\leq l \hat{x}+o(\hat{x}) \textup{ for } \hat{x}\downarrow 0 \ \textup{ for all } i \in \{1, \dots, 5\}.
\end{eqnarray}

\paragraph{The case of $v_1, v_2, v_3, v_4$:} 
We present a proof of (\ref{eq:toproveforthem1}) for the case of $v_1$. 
The proof is similar for the cases of $v_2$, $v_3$, and $v_4$.

First it is straightforward to verify that there exists a sufficiently small $\hat{x}_0>0$ such that for any $0<\hat{x}<\hat{x}_0$, we have one of the following two cases:
\ref{whatwascase2} $S_1(\hat{x})\backslash S_1(0) = \emptyset$ (including the case $S_1(\hat{x}) = \emptyset$) and 
\ref{whatwascase1} $S_1(\hat{x})\backslash S_1(0)\neq \emptyset$,
as, for sufficiently small $\hat{x}_0$, we may assume that  it is impossible that $S_1(\hat{x})\neq\emptyset=S_1(0)$. 
\begin{enumerate}[label={(\roman*)}]
\item \label{whatwascase2}
We have $v_1(\hat{x})\leq v_1(0)\leq v(0)\leq 0$ (this holds even in the case when $S_1(\hat{x}) = \emptyset$ or $S_1(\hat{x}) = S_1(0) = \emptyset$), \textit{i.e.}, $v_1(\hat{x})-v(0)\leq 0\cdot \hat{x}$.

\item \label{whatwascase1}
We consider two sub-cases: 
\begin{enumerate}[label={(\alph*)}]
\item if $b=0$, the feasibility of $\hat{x}=0$ yields $c\leq 0$. Thus, we have $S_1(0) = [0,1]\supseteq S_1(\hat{x})$ for $\hat{x}\in (0,\hat{x}_0)$ (actually in case \ref{whatwascase2}.).

\item if $b\neq 0$, assume first that $b<0$.
Then $S_1(\hat{x}) = \{y\in [0,1]: y \leq (c-\hat{a}\hat{x})/(b+\hat{q}\hat{x})\}$. We also denote $\Delta(\hat{x}):= (c-\hat{a}\hat{x})/(b+\hat{q}\hat{x}) - c/b$  and since $b < 0$, we have that $|\Diff\Delta(\hat{x})/\Diff \hat{x}| < \infty$ for $\hat{x}=0$.

Since $S_1(\hat{x})\backslash S_1(0)\neq \emptyset$, we have $0 \leq c/b<1$ as well as $\Delta(\hat{x})\geq 0$.

Utilizing the fact that $\Delta (\hat{x})\in[0,1-c/b]$ (the upper bound from the fact that $\hat{x}$ is assumed to be sufficiently small) and the concavity of $h$, we obtain
\begin{equation}
    \label{eq:concavelinear}
h\left(0,\frac{c}{b}+\Delta(\hat{x})\right)\geq \frac{\Delta(\hat{x})}{1-c/b} h(0,1)+\left(1-\frac{\Delta(\hat{x})}{1-c/b}\right) h\left(0, \frac{c}{b}\right).
\end{equation}
We now have 
\begin{eqnarray*}
v_1(\hat{x})-v(0)&\leq& v_1(\hat{x})-v_1(0)\\
& \leq& \max\left\{(r - h(0, 0)) - ( r - h(0,0)),\right.\\
&&\left.(r - h(0, \Delta(\hat{x})+c/b) - (r - h(0, c/b) )\right\}\\
& \leq& \max\left\{0,\frac{h(0,c/b)-h(0,1)}{1-c/b} \Delta(\hat{x}) \right\}\\
& =& \max\left\{0,\left.\frac{h(0,c/b)-h(0,1)}{1-c/b}\frac{\Diff\Delta(\hat{x})}{\Diff \hat{x}}\right|_{\hat{x}= 0}\hat{x}+o(\hat{x})\right\},\ \ \ \  (\hat{x}\downarrow 0)
\end{eqnarray*}

where the second inequality comes from the fact that $v_1(\hat{x}) = \max \{ r- h(0,0), r - h(0, \Delta + c/b)\}$ and $v_1(0) = \max \{ r- h(0,0), r - h(0, c/b)\}$, and the third inequality follows from (\ref{eq:concavelinear}), and the last equality follows from Taylor's series expansion of $\frac{h(0,c/b)-h(0,1)}{1-c/b}\Delta(\hat{x})$ around $\hat{x} = 0$ .

Thus, there exists $l_1<\infty$ such that $v_1(\hat{x})-v(0)\leq l_1 \hat{x}+o(\hat{x})$ for $\hat{x}\downarrow 0$. A similar argument holds for the case of $b>0$.
\end{enumerate}
\end{enumerate}


\paragraph{The case of $v_5$:}

If $q=0$, then it is easy to see that there always exists an optimal solution to the optimization problem corresponding to computing $v(\hat{x})$ such that one of \eqref{sub1*}-\eqref{sub4*} is active. 
Therefore if $q = 0$, it is sufficient to verify (\ref{eq:toproveforthem1}) for $v_1$, $v_2$, $v_3$, and $v_4$ as $v_5(\hat{x}) \leq \max \left\{v_1(\hat{x}), v_2(\hat{x}), v_3(\hat{x}), v_4(\hat{x})\right\}$. 


Therefore, we consider the case of $v_5$ for $q\neq 0$. Without loss of generality assume that $q>0$ or perform the transformation $x\leftarrow 1-x$. We in addition assume that $q=1$ or we can scale all parameters by $1/q$. The problem can now be rewritten as
\begin{eqnarray*}
 v_5(\hat{x}) := &\max & r - h(x,y)\\
&\st & (x+b+\hat{q}\hat{x})(y+a)= c+ab +(\hat{q}a-\hat{a})\hat{x},\ (x,y)\in[0,1]^2.
\end{eqnarray*}
We denote its feasible region by $S_5(\hat{x})$.

The feasible region is the boundary of a hyperbola intersected with the $[0, 1]^2$ box. 
If both the connected components of the hyperbola intersect the $[0, 1]^2$ box, or $c+ab+(\hat{q}a-\hat{a})\hat{x}\leq 0$, then it is easy to see that there exists an optimal solution of the optimization problem corresponding to computation of $v(\hat{x})$ where at least one of \eqref{sub1*}-\eqref{sub4*} is active, \textit{i.e.}, $v_5(\hat{x}) \leq \max \left\{v_1(\hat{x}), v_2(\hat{x}), v_3(\hat{x}), v_4(\hat{x})\right\}$. So we can disregard this case as well and assume that only one of the connected components of the hyperbola is feasible, as well as $c+ab+(\hat{q}a-\hat{a})\hat{x}\geq 0$. 

Note again that if $S(0) = \emptyset$, then there exists $\hat{x}_0 > 0$ such that $S(\hat{x}) = \emptyset$ for all $0 \leq \hat{x} < \hat{x}_0$ and thus $v_5(\hat{x}) \leq v(\hat{x}) \leq 0$. 
Therefore we may assume that $S(0) \neq \emptyset.$ 
Let $h^M := \max\{\max_{(x,y)\in [0,1]^2} h(x,y), 0\}$ and $h^m = \min\{\min_{(x,y)\in [0,1]^2} h(x,y), 0\}$.








\begin{enumerate}[label={(\roman*)}]
\item
We first consider the case when $c+ab = 0$. Feasibility of $S(0)$ requires $-b\in[0,1]$ or $-a\in[0,1]$. In addition, as only one part of the hyperbola is feasible for $S(\hat{x})$ for $\hat{x} >0$ (and sufficiently small), we obtain that either $-a\not\in (0,1)$ or $-b\not\in(0,1)$, and in addition $\hat{q}a-\hat{a}\geq 0$.
\label{casei}

\begin{enumerate}[label={(\alph*)}]
\item \label{caseia}
If $-a \in[0,1]$ and $-b\notin[0,1]$ (see Fig~\ref{fig:exist_1a}),
%
\begin{figure}
     \centering
     \begin{subfigure}[b]{0.43\textwidth}
    \centering
    \includegraphics[width=\textwidth]{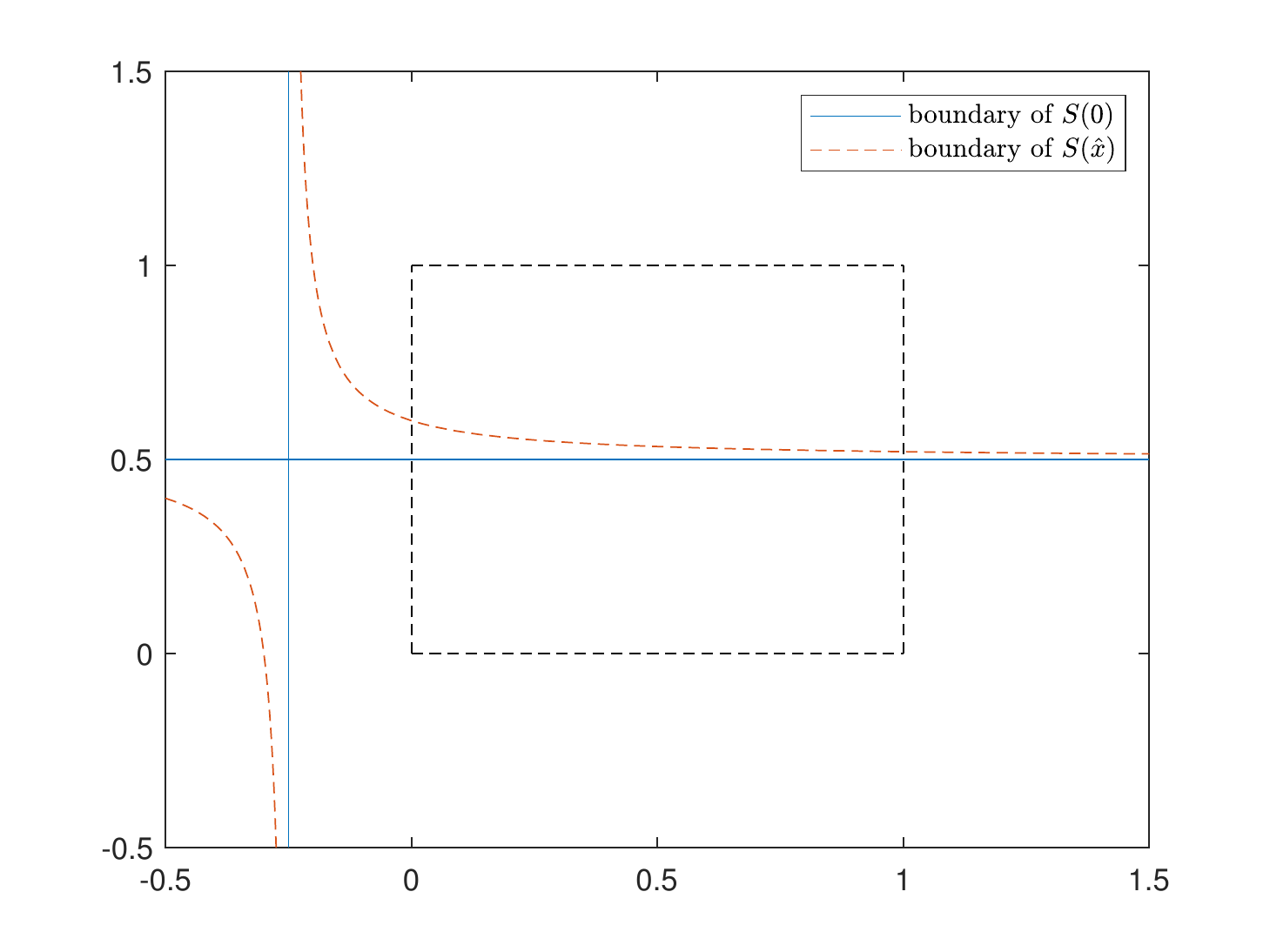}
    \caption{Case \ref{casei}\ref{caseia}}
    \label{fig:exist_1a}
     \end{subfigure}
     \begin{subfigure}[b]{0.43\textwidth}
    \centering
    \includegraphics[width=\textwidth]{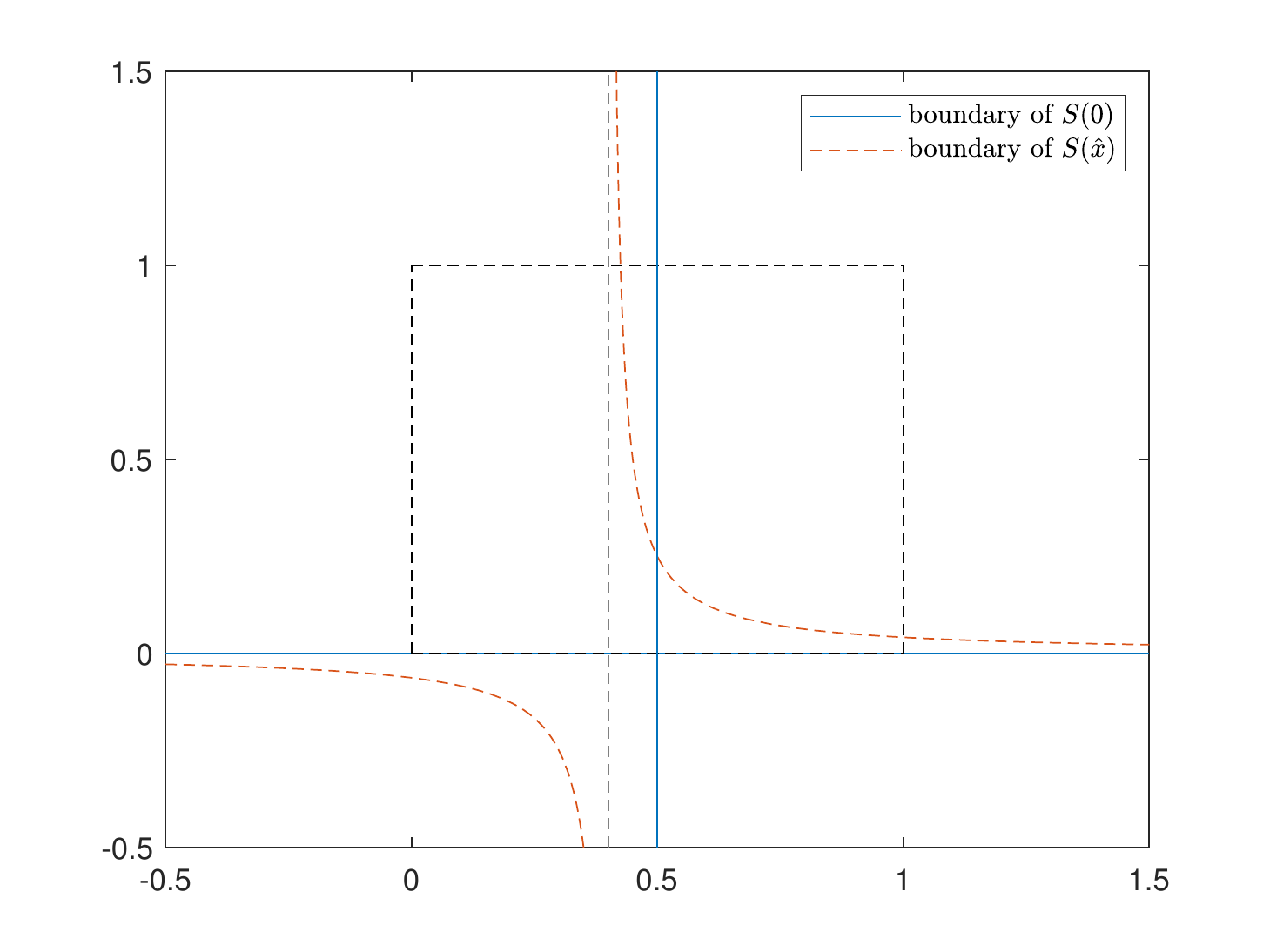}
    \caption{Case \ref{casei}\ref{caseid1}1}
    \label{fig:exist_1d1}
     \end{subfigure}
     \\
     \begin{subfigure}[b]{0.43\textwidth}
    \centering
    \includegraphics[width=\textwidth]{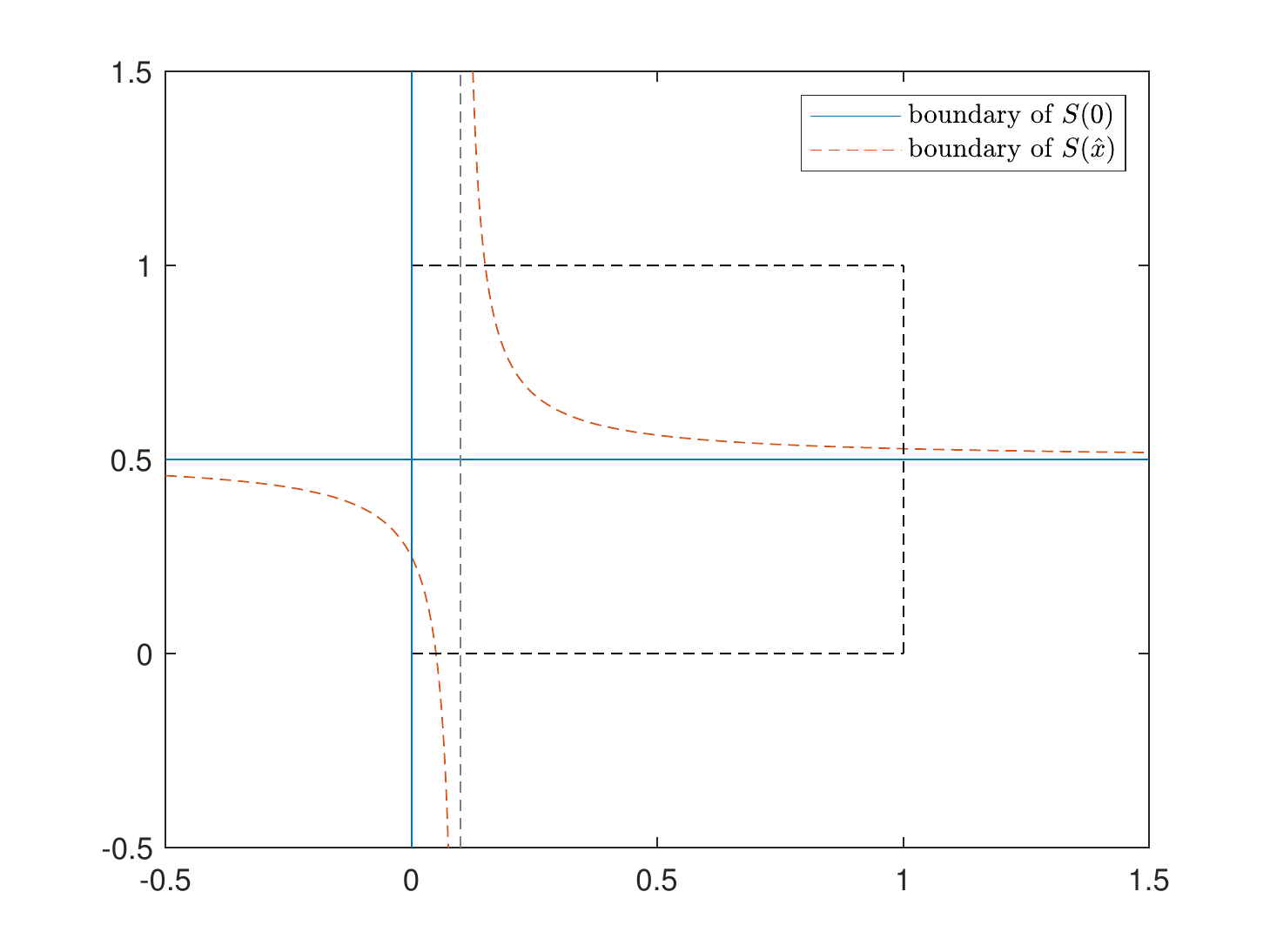}
    \caption{Case \ref{casei}\ref{caseie1}1}
    \label{fig:exist_1e1}
     \end{subfigure}
      \begin{subfigure}[b]{0.43\textwidth}
    \centering
    \includegraphics[width=\textwidth]{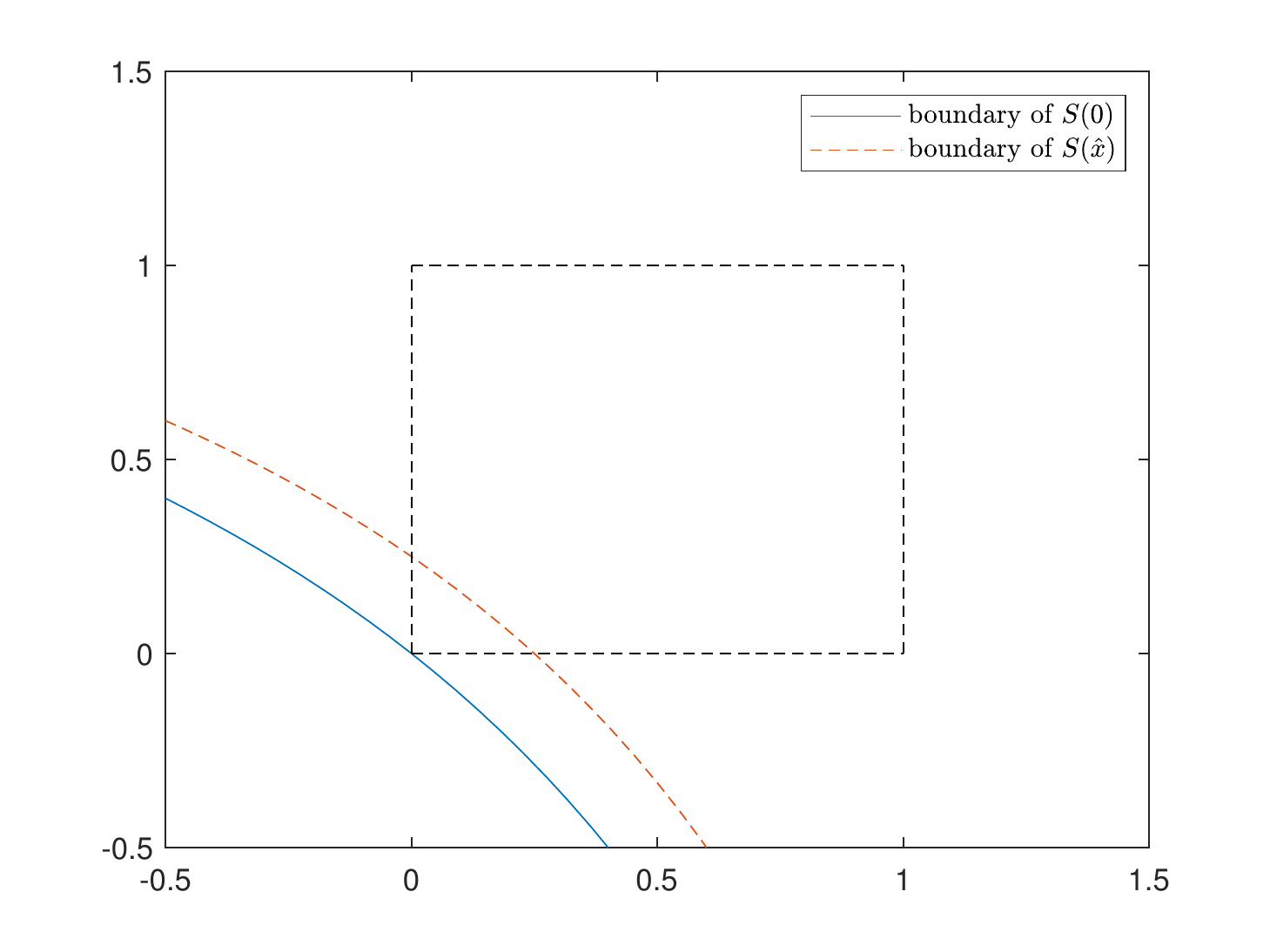}
    \caption{Case \ref{caseii}\ref{caseiia}1}
    \label{fig:exist_2a1}
     \end{subfigure}    
     \\
     \begin{subfigure}[b]{0.43\textwidth}
    \centering
    \includegraphics[width=\textwidth]{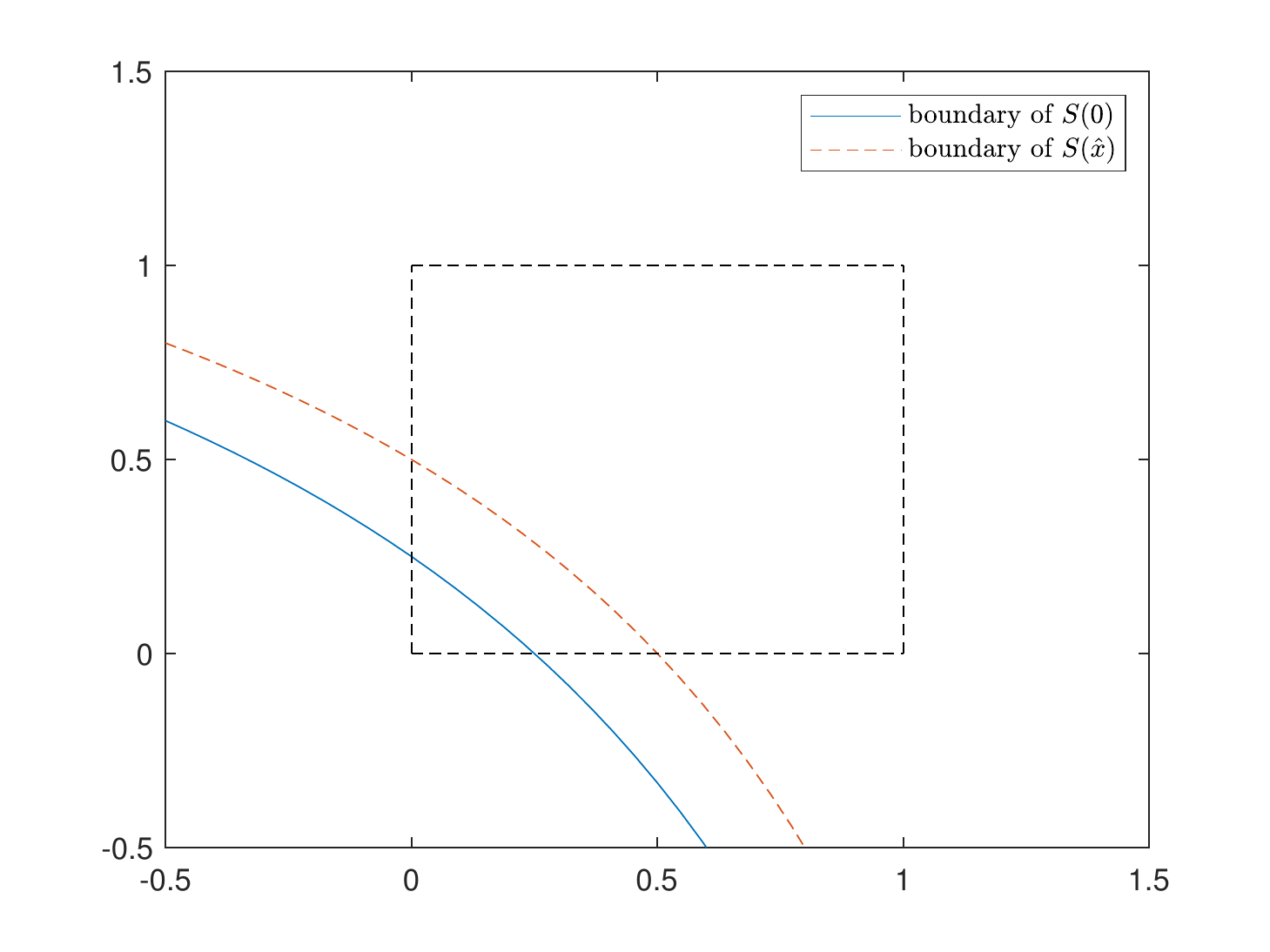}
    \caption{Case \ref{caseii}\ref{caseiia}2}
    \label{fig:exist_2a2}
     \end{subfigure}
      \begin{subfigure}[b]{0.43\textwidth}
    \centering
    \includegraphics[width=\textwidth]{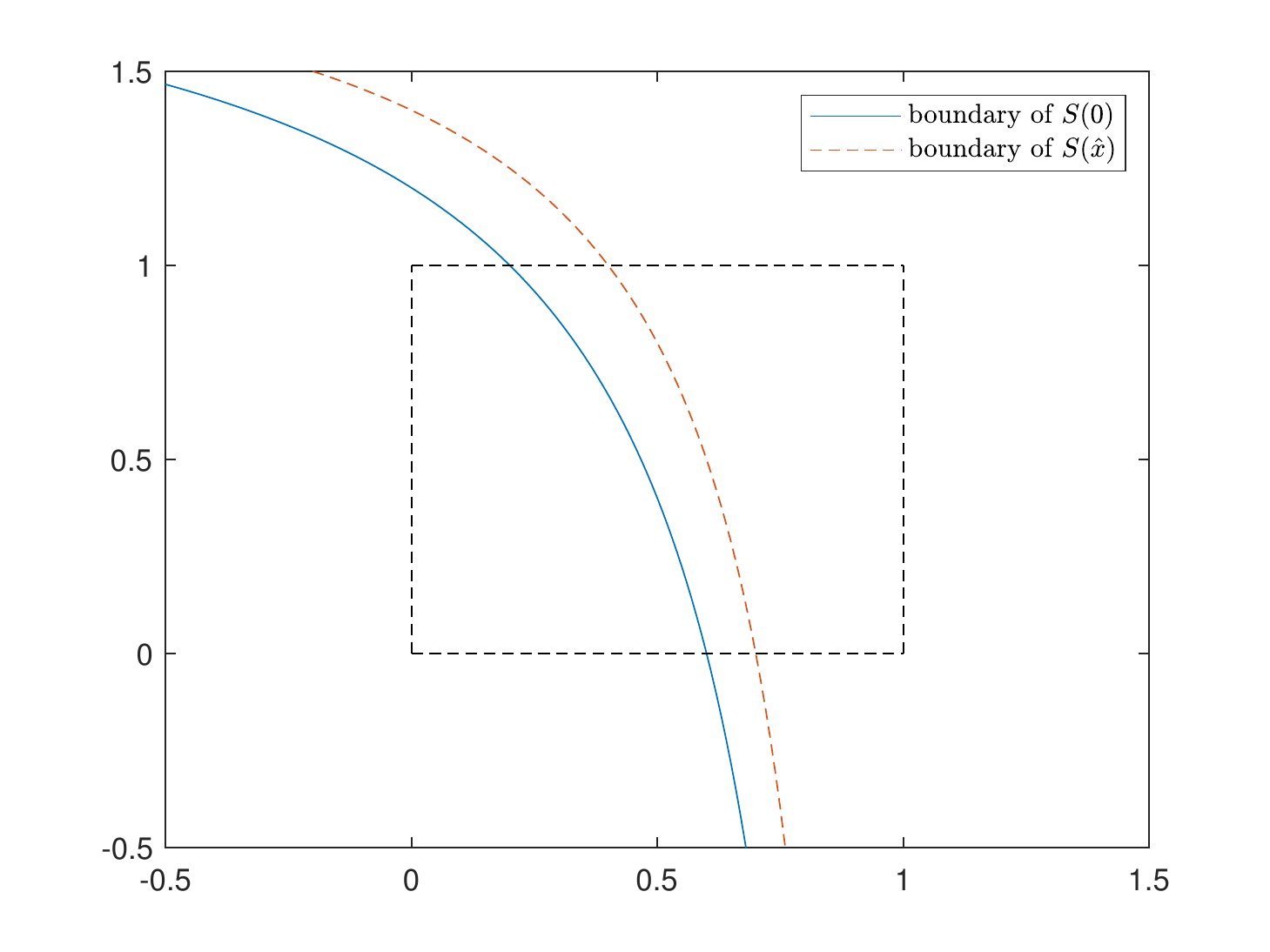}
    \caption{Case \ref{caseii}\ref{caseiia}3}
    \label{fig:exist_2a3}
     \end{subfigure}    
     \\
          \begin{subfigure}[b]{0.43\textwidth}
    \centering
    \includegraphics[width=\textwidth]{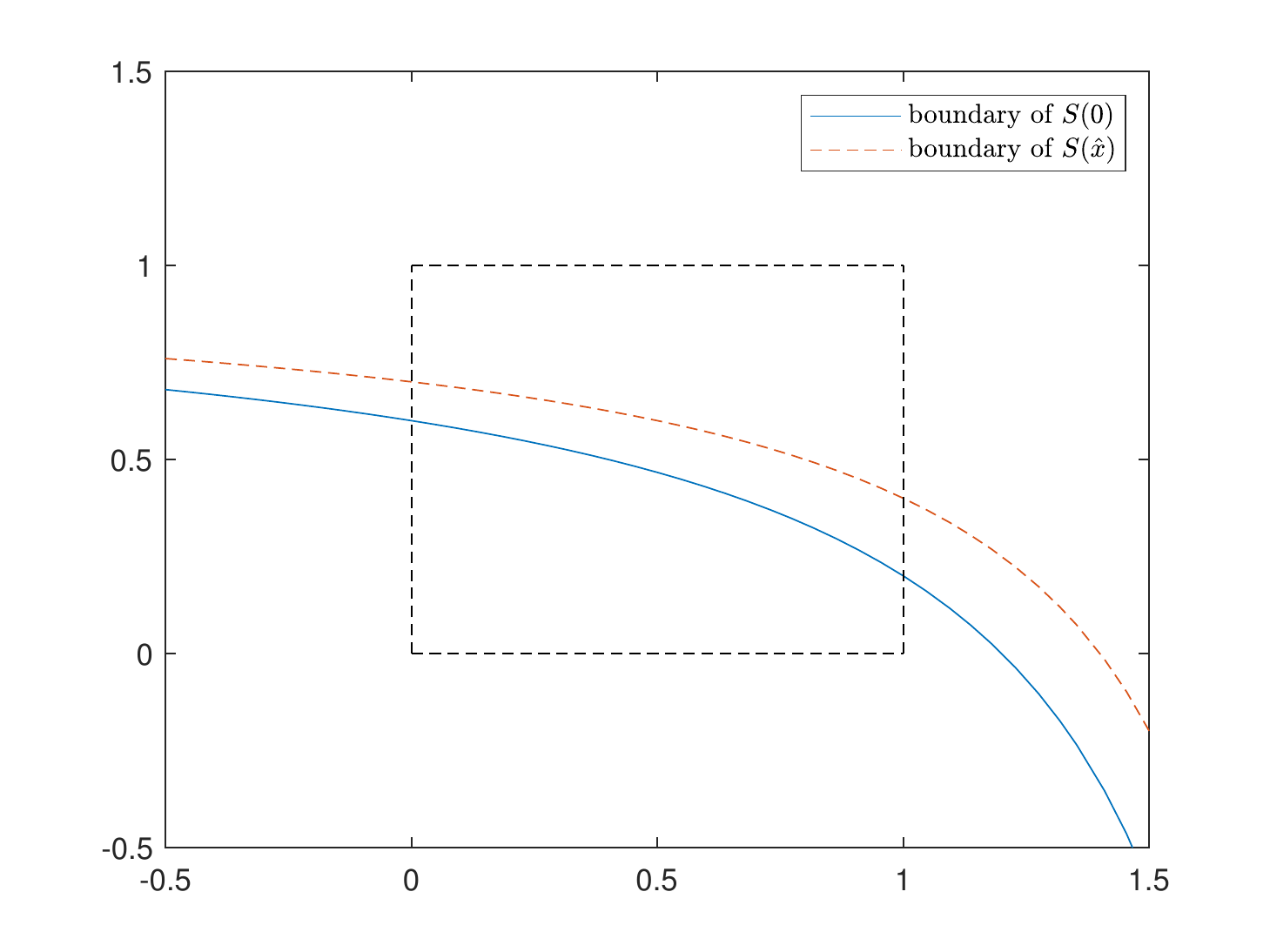}
    \caption{Case \ref{caseii}\ref{caseiia}4}
    \label{fig:exist_2a4}
     \end{subfigure}
      \begin{subfigure}[b]{0.43\textwidth}
    \centering
    \includegraphics[width=\textwidth]{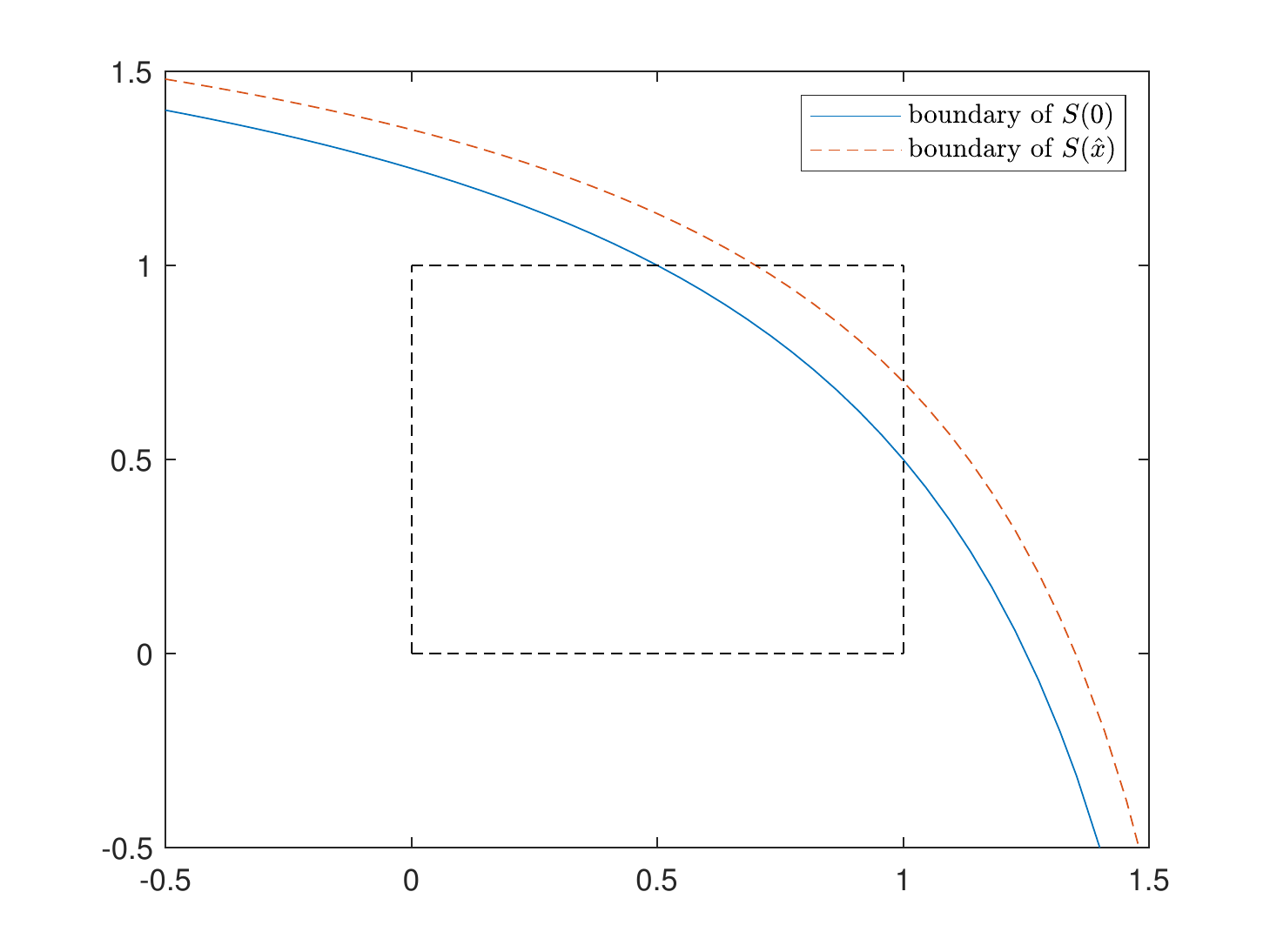}
    \caption{Case \ref{caseii}\ref{caseiia}5}
    \label{fig:exist_2a5}
     \end{subfigure}    
\end{figure}
as clearly $(x,-a)\in S(0)$, we have 
\begin{eqnarray*}
    && v_5(\hat{x})-v(0)\\
    &\leq& \max\left\{\begin{array}{ll} \max & r - h(x, y)- v(0) \\
\st& (x,y)\in S(0) \\ \textrm{ } \end{array},\right.\left.\begin{array}{ll} \max & (r  - h(x, y)) - (r - h(x,-a) ) \\
\st & xy + ax+by+\hat{a}\hat{x} + \hat{q} \hat{x}y = c\\
& (x,y)\in[0,1]^2\backslash S(0)\end{array}
        \right\}\\
&\leq& \max\left\{0,\ \begin{array}{ll}\max & h(x, -a) - h(x, y)\\
\st & y = \frac{(\hat{q}a-\hat{a})\hat{x}}{x+b+\hat{q}\hat{x}}-a,\\
        & (x,y)\in[0,1]^2\backslash S(0)\end{array}
        \right\}.
\end{eqnarray*}

Thus, depending on the sign of {
$b$ (i.e. sign of $x+b+\hat{q}\hat{x}$ for sufficiently small $\hat{x}$)}, 
for $(x,y)\in[0,1]^2\backslash S(0)$ we have, using concavity of $h$, either
$$
h(x, y) \geq \frac{\Delta(x, \hat{x})}{1+a} h(x, 1) + (1-\frac{\Delta(x, \hat{x})}{1+a})h(x, -a)\geq \frac{\Delta(x, \hat{x})}{1+a} (h^m-h^M) + h(x, -a)
$$
or
$$
h(x, y) \geq \frac{-\Delta(x, \hat{x})}{a} h(x, 0) + (1-\frac{-\Delta(x, \hat{x})}{a})h(x, -a)\geq \frac{-\Delta(x, \hat{x})}{a} (h^m-h^M) + h(x, -a)
$$
where
$
\Delta(x,\hat{x}) := \frac{(\hat{q}a-\hat{a})\hat{x}}{x+b+\hat{q}\hat{x}}
$
is greater than $0$ in the first case and is less than $0$ in the second. 
From the continuity of $h$ we can get $\xi$ independent of $x$ and $\hat{x}$ such that 
$
h(x, y) \geq h(x, -a) + \xi\Delta(x, \hat{x}) .
$

Therefore, we conclude that 
\begin{eqnarray*}
v_5(\hat{x})-v(0) &\leq&  \max\left\{0, \max_{x\in [0,1]}-\xi\Delta(x, \hat{x})\right\}
=\max\left\{0, \max_{x\in\{0,1\}}-\xi\Delta(x, \hat{x})\right\}\\
&=& \max\left\{0, \max_{x\in\{0,1\}}\left.\frac{-\xi \partial \Delta(x, \hat{x})}{\partial \hat{x}}\right|_{\hat{x} =0}\hat{x}+o(\hat{x})\right\},\ \ \ \ (\hat{x}\downarrow 0)
\end{eqnarray*}
where the second equation comes from the monotonicity of $\Delta(x, \hat{x})$ on $x\in[0,1]$ for sufficiently small $\hat{x}$, due to the fact that $-b\notin[0,1]$. Since $x + b \neq 0$, we have that $\left|\max_{x\in\{0,1\}}\left.\frac{-\xi \partial \Delta(x, \hat{x})}{\partial \hat{x}}\right|_{\hat{x} =0}\right| < \infty$.
Thus, there exists $l<\infty$ such that $v_5(\hat{x})-v(0)\leq l \hat{x}+o(\hat{x})$ for $\hat{x}\downarrow 0$.

\item 
If $-b \in[0,1]$ and $-a\notin[0,1]$, a similar analysis can be conducted to obtain $l < \infty$ such that $v_5(\hat{x})-v(0)\leq l \hat{x}+o(\hat{x})$ for $\hat{x}\downarrow 0$.

\item
If $-a= -b=0$ or $-a=-b = 1$, then $S(0) = [0,1]^2$ so that $v_5(\hat{x})-v(0)\leq 0$.

\item \textit{(subcase 1)} If $-a = 0$ with $-b\in (0,1]$ (see Fig~\ref{fig:exist_1d1}), 
we have $S(0)\supseteq [-b,1]\times [0, 1]$ and $S_5(\hat{x})\subset [-b-\hat{q}\hat{x},1]\times [0,1]$ (since $(\hat{q}a-\hat{a})\hat{x} \geq 0$). Therefore, for $\hat{q}\leq 0$, $v_5(\hat{x})-v(0)\leq 0$ and for $\hat{q}>0$,
\begin{align*}
&v_5(\hat{x})-v(0) \\
\leq &\max\left\{
\begin{array}{ll} 
\max & r - h(x, y)- v(0) \\
\st& (x,y)\in S(0) \\
\textrm{ } \\
\end{array}\ , \ 
\begin{array}{ll} 
\max & (r  - h(x, y)) - (r - h(-b,y) ) \\
\st & xy + ax+by+\hat{a}\hat{x} + \hat{q} \hat{x}y = c\\
& (x,y)\in[0,1]^2\backslash S(0)
\end{array}
\right\}\\
\leq &\max\left\{0\ , \ 
\begin{array}{ll} 
\max & h(-b, y) - h(x, y) \\
\st & (x,y)\in[-b-\hat{q}\hat{x},-b]\times [0,1]
\end{array}
\right\}.
\end{align*}
Note that for $x\in [-b-\hat{q}\hat{x},-b]$, using concavity of $h$, we have that
\begin{align*}
h(x,y) &\geq \frac{x}{-b} h(-b, y) + \frac{-b-x}{-b} h(0,y) \\
&\geq \frac{-b-x}{-b} (h^m-h^M) + h(-b,y) \geq \frac{\hat{q}\hat{x}}{-b} (h^m-h^M) + h(-b,y).
\end{align*}
Thus
$
v_5(\hat{x})-v(0) \leq \max \left\{0, \frac{\hat{q}(h^m-h^M)}{b}\hat{x}\right\}
$
and we obtain $l<\infty$ such that $v_5(\hat{x}) -v(0)\leq l\hat{x}$.
\label{caseid1}

\textit{(subcase 2)} If $-a = 1$ with $-b\in [0,1)$, this is the same as \ref{casei}\ref{caseid1}1 as we might perform $x\leftarrow 1-x$ together with $y\leftarrow 1-y$.

\item 
\textit{(subcase 1)}
If $-b = 0$ with $-a\in (0,1)$ (see Fig~\ref{fig:exist_1e1}), 
we have $S(0)\supseteq ([0,1]\times [-a,1]) \cup (\{0\}\times [0,1])$ and $S_5(\hat{x})\subset ([0,1]\times [-a,1]) \cup ([0, -\hat{q}\hat{x}]\times [0,1])$.  For $\hat{q}\geq 0$ we have $S_5(\hat{x})\subset S(0)$ so $v_5(\hat{x})-v(0)\leq 0$. For $\hat{q}<0$, we have
\begin{align*}
 &   v_5(\hat{x})-v(0)\\ &\leq \max\left\{
\begin{array}{ll} 
\max & r - h(x, y)- v(0) \\
\st& (x,y)\in S(0) \\
\textrm{ } \\
\end{array}
\ , \
\begin{array}{ll} 
\max & (r  - h(x, y)) - (r - h(0,y) ) \\
\st & xy + ax+by+\hat{a}\hat{x} + \hat{q} \hat{x}y = c\\
& (x,y)\in[0,1]^2\backslash S(0)\end{array}\right\}\\
&\leq \max\left
\{0 \ , \
\begin{array}{ll} 
\max & h(0, y) - h(x, y) \\
\st & (x,y)\in[0,-\hat{q}\hat{x}]\times [0,1]
\end{array}
\right\}.
\end{align*}
For $(x,y)\in[0,-\hat{q}\hat{x}]\times [0,1]$, using concavity of $h$, we write
\begin{eqnarray*}
h(x,y)\geq& (1-x)h(0,y)+ xh(1,y) &\geq h(0,y) - x(h(0,y) - h(1,y)) \\ \geq& h(0,y) - x(h^M - h^m)
&\geq h(0,y) + \hat{q}\hat{x}(h^M - h^m).
\end{eqnarray*}
Thus
$
v_5(\hat{x})-v(0) \leq \max\left\{0, \hat{q} (h^m-h^M)\hat{x}\right\}
$
and we get $l<\infty$ such that $v_5(\hat{x}) -v(0)\leq l\hat{x}$.
\label{caseie1}

\textit{(subcase 2)} 
If $-b=1$ with $-a\in (0,1)$, the argument is the same as for \ref{casei}\ref{caseie1}1 after performing $x\leftarrow 1-x$ and $y\leftarrow 1-y$.
\end{enumerate}
\item 
We next consider the case when $c+ab\neq 0$. 
As discussed above, we assume $c + ab + (\hat{q}a - \hat{a})\hat{x} \geq 0$ for all $\hat{x} >0$ and sufficiently small, and thus $c + ab >0$. In addition, if $(x,y)\in S(\hat{x})$ for $\hat{x} >0$ and sufficiently small, we have $x>-b-\hat{q}\hat{x}, y>-a$ or $x<-b-\hat{q}\hat{x}, y<-a$ but not both. 
\label{caseii}

\begin{enumerate}[label={(\alph*)}]
\item In the case $x<-b-\hat{q}\hat{x}$, $y<-a$ for $(x,y) \in S(\hat{x})$, we denote $S'(0)=S(0)\cap \{(x,y)\,|\,y<-a, x<-b\}\subseteq S(0)$. 
Since $(0, 0)\in S'(0)$, we may assume $(1, 1)\notin S'(0)$, or $S'(0)\supseteq[0,1]^2$. 
Then clearly $S(\hat{x}) \subseteq S(0)$ and therefore $v_5(\hat{x}) \leq v(0) \leq 0.$
\label{caseiia}


\textit{(Subcase 1)} If $S'(0) = \{(0,0)\}$ (see Fig~\ref{fig:exist_2a1}), 
we have $c = 0$ and $a, b<0$. Thus, for $\hat{x}$, we obtain a curve between
$
(0, -\frac{\hat{a}\hat{x}}{b+\hat{q}\hat{x}})$ and $(-\frac{\hat{a}\hat{x}}{a}, 0).
$
Thus, for any $(x,y)$ within the curve, from concavity of $h$, it is clear that
\begin{align*}
h(x,y)\geq\ & \min\left\{h(0,0), h\left(0, -\frac{\hat{a}\hat{x}}{b+\hat{q}\hat{x}}\right), h\left(-\frac{\hat{a}\hat{x}}{a}, 0\right), h\left(-\frac{\hat{a}\hat{x}}{a}, -\frac{\hat{a}\hat{x}}{b+\hat{q}\hat{x}}\right)\right\}\\
\geq\ &  h(0, 0) + \min\left\{0,-\frac{\hat{a}\hat{x}}{b+\hat{q}\hat{x}}(h(0,1)-h(0,0)), -\frac{\hat{a}\hat{x}}{a}(h(1,0)-h(0,0)),\right.\\ 
&\left. -\frac{\hat{a}\hat{x}}{b+\hat{q}\hat{x}}(h(0,1)-h(0,0))-\frac{\hat{a}\hat{x}}{a}(h(1,0)-h(0,0))\right\}\\
\geq\ & h(0,0)+\max\left\{0, -\frac{\hat{a}\hat{x}}{b+\hat{q}\hat{x}}, -\frac{\hat{a}\hat{x}}{a},  -\frac{\hat{a}\hat{x}}{b+\hat{q}\hat{x}}-\frac{\hat{a}\hat{x}}{a}\right\}(h^m-h^M)\\
=\ & h(0,0) + \xi\hat{x} + o(\hat{x}), \ \ \ \ (\hat{x}\downarrow 0)
\end{align*}
where the second inequality uses the concavity of $h$ and the fact that for sufficiently small $\hat{x}$, $-\frac{\hat{a}\hat{x}}{b+\hat{q}\hat{x}} + (-\frac{\hat{a}\hat{x}}{a}) \leq 1$, and $\xi$ is a constant obtained by taking the derivative of the ``max" term times $h^m - h^M$ (since $a,b< 0$, the term is differentiable.) 
Thus, by setting $l = \xi$ we have $v_5(\hat{x})-v(0)\leq l\hat{x} + o(\hat{x})$ for all sufficiently small $\hat{x}$.

\textit{(Subcase 2)} If $S'(0)\backslash \{(0,0)\}\neq \emptyset$, but $(0,1),(1,0)\notin S'(0)$ (see Fig~\ref{fig:exist_2a2}). 
Then the curve for $\hat{x}=0$ is between $(0,\frac{c}{b})$ and $(\frac{c}{a}, 0)$. We find $\tilde{x}, \tilde{y}$ such that $(\tilde{x}, \frac{c}{2b}), (\frac{c}{2a}, \tilde{y})$ are within the curve. 
From the convex nature of one part of the hyperbola, it can be verified that
$\tilde{y}>\sfrac{c}{2b}$, $\tilde{x}>\sfrac{c}{2a}.$ 
Consider $x\in [0, \tilde{x}]$ 
and denote $y'(x) = (c-ax)/(x+b)$, noting that $(x,y'(x))\in S(0)$ and 
\begin{eqnarray*}
    &&w_x(x, \hat{x}) := 
\max_y \{\ (r-h(x,y)) - v(0) \ | \ (x,y)\in S_5(\hat{x})\ \},
\\
&\leq&\max\left\{
\begin{array}{ll}
\max_y & (r-h(x,y)) - v(0))\\
\st&  (x,y)\in S(0) \\
\textrm{ } \\
\end{array}
\ , \ 
\begin{array}{ll}
\max_y &  (r- h(x, y)) - (r-h(x, y'))\\
\st&  xy + ax+by+\hat{a}\hat{x} + \hat{q} \hat{x}y = c,\\
& (x,y)\in[0,1]^2\backslash S(0)
\end{array}\right\}\\
&= & \left\{
\begin{array}{lc}
\max\{0, h(x, y'(x)) - h(x, y'(x) +\Delta(x, \hat{x}))\}\  & \Delta(x, \hat{x}) \geq 0\\
0 & \Delta(x, \hat{x})<0 
\end{array}
\right.
\end{eqnarray*}
where
$$
\Delta(x, \hat{x}) := \frac{c-ax-\hat{a}\hat{x}}{x+b+\hat{q}\hat{x}}-\frac{c-ax}{x+b}=\frac{ax\hat{q} - \hat{a}x - \hat{a}b - c\hat{q}}{(x+b+\hat{q}\hat{x})(x+b)}\hat{x},
$$
while noting that if $\Delta(x, \hat{x}) <0$, then $(x, y'(x) + \Delta(x, \hat{x}))\in S(0)$. 

Note that $[0,\tilde{x}]\subseteq \textup{Proj}_x S'(0)$ and thus $x + b < 0$ for $x \in [0, \tilde{x}]$ and therefore for $0<\hat{x}<\hat{x}_0$ sufficiently small, we obtain 
$
\frac{\hat{q}\hat{x}}{x+b}\geq -\frac{1}{2}$ or  $2\frac{x+b+\hat{q}\hat{x}}{x+b}\geq 1
$
for any $x\in [0,\tilde{x}]$. Thus for $\Delta(x, \hat{x}) \geq 0 $,
$$
\Delta(x,\hat{x})\leq 2\frac{ax\hat{q} - \hat{a}x - \hat{a}b - c\hat{q}}{(x+b)^2}\hat{x}\leq \max_{x\in[0,\tilde{x}]}\left\{2\frac{ax\hat{q} - \hat{a}x - \hat{a}b - c\hat{q}}{(x+b)^2}\right\} \hat{x}:=l'_{5,x}\hat{x}
$$
while noting that continuity gives $l'_{5,x}<\infty$ independent of $x$, $\hat{x}$.

Now for the case of $\Delta(x, \hat{x}) \geq 0 $ we have that
\begin{eqnarray*}
&&h(x,y'(x)) - h(x,y'(x)+\Delta(x, \hat{x})) \\
&\leq  & \frac{\Delta(x, \hat{x})}{1-y'(x)}(h(x, y'(x))-h(x, 1))
\leq   \frac{\Delta(x, \hat{x})}{1-y'(x)}(h^M-h^m)\\
&\leq & (h^M-h^m)\max_{x\in[0,\tilde{x}]}\left\{\frac{1}{1-y'(x)}\right\} \max_{x\in[0,\tilde{x}]}\Delta(x, \hat{x})\\
&\leq& (h^M-h^m)\max_{x\in[0,\tilde{x}]}\left\{\frac{1}{1-y'(x)}\right\} l'_{5,x}\hat{x},
\end{eqnarray*}
Therefore, there exists $l_{5,x}<\infty$ independent of $x, \hat{x}$ such that $w_x(x, \hat{x}) \leq l_{5,x}\hat{x}+o(\hat{x})$.

A similar analysis of
$
w_y(y, \hat{x}) :=  \max_x \{r-h(x,y) - v(0)|x,y\in S(0)
\}
$
provides
$
w_y(y,\hat{x})\leq l_{5,y}\hat{x}+o(\hat{x})$ for  $\hat{x}\downarrow 0
$, 
where $l_{5,y}$ is a constant independent of $y\in[0,\tilde{y}]$ and $\hat{x}$.

Finally, we combine the results for $w_x$ and $w_y$.
Since they cover the whole curve with overlapping, we have $v_5(\hat{x})-v(0)\leq \max\{l_{5,x}, l_{5,y}\}\hat{x}+o(\hat{x})$ for $\hat{x}\downarrow 0$.

\textit{(Subcase 3)} If $(0,1)\in S'(0)$ but $(1,0)\notin S'(0)$ (see Fig~\ref{fig:exist_2a3}), 
we apply a similar analysis for $w_y(y, \hat{x})$ with $y \in [0,1]$ and obtain a constant $l_{5,y}$ independent of $y$ and $\hat{x}$. 
We thus get $v_5(\hat{x})-v(0)\leq l_{5,y}\hat{x}+o(\hat{x})$ for $\hat{x}\downarrow 0$.

\textit{(Subcase 4)} 
If $(1,0)\in S'(0)$ but $(0,1)\notin S'(0)$ (see Fig~\ref{fig:exist_2a4}), 
we apply a similar analysis for $w_x(x, \hat{x})$ with $x \in [0,1]$ and similarly obtain a constant $l_{5,x}$ independent of $x$ and $\hat{x}$.

\textit{(Subcase 5)} 
If $(1,0),(0,1)\in S'(0)$ (see Fig~\ref{fig:exist_2a5}). 
Then the curve for $\hat{x}=0$ is between $(1, \frac{c-a}{1+b})$ and $(\frac{c-b}{1+a},1)$. 
Similar to \ref{caseii}\ref{caseiia}2,
we find $\tilde{x},\tilde{y}$ such that $(\tilde{x}, \frac{c-a+b+1}{2+2b})$, $(\frac{c-b+a+1}{2+2a}, \tilde{y})$ are within the curve. From the convex nature of one part of the hyperbola, we have
$
\tilde{y}>\frac{c-b+a+1}{2+2a}$ and $ \tilde{x}>\frac{c-a+b+1}{2+2b}.
$
Similar to \ref{caseii}\ref{caseiia}2, we consider $w_x(x, \hat{x})$ for $x\in [\frac{c-a+b+1}{2+2b}, 1]$ and $w_y(y, \hat{x})$ for $y\in [\frac{c-b+a+1}{2+2a}, 1]$. 
We obtain $l_{5,x}$ and $l_{5,y}$. 
Therefore, we write $v_5(\hat{x})-v(0)\leq \max\{l_{5,x},l_{5,y}\}\hat{x}+o(\hat{x})$ for $\hat{x}\downarrow 0$.

\item 
If for $\hat{x}\in (0,\hat{x}_0)$, and for $(x,y) \in S(\hat{x})$  $x>-b-\hat{q}\hat{x}, y>-a$, we denote $S'(0)=S(0)\cap \{y>-a, x>-b\}\subseteq S(0)$. 
If $(1,1)\in S'(0)$ while $(0,0)\notin S'(0)$, the proof is the same as \ref{caseii}\ref{caseiia} as we can perform $x\leftarrow 1-x$ with $y\leftarrow 1-y$.
\end{enumerate}

Combining the discussions for \ref{casei} and \ref{caseii} shows that there exists $l_5<\infty$ such that $v_5(\hat{x})-v(0)\leq l_5\hat{x}+o(\hat{x})$ for $\hat{x}\downarrow 0$. \qed
\end{enumerate}
\end{proof}

\section{Proof of Proposition~\ref{lm:seqind}}
\label{section:seqind}

\begin{proof}[Proposition~\ref{lm:seqind}]
Consider any feasible solution $(x',y')\in Q$. 
Then
\begin{align*}
   & \sum_{i\in J_0\cup J_1} \gamma_i (x'_i, y'_i)
    \geq\ \sum_{i\in J_0} \psi(a_ix'_iy'_i)+\sum_{i\in J_1} \psi(a_ix'_iy'_i-a_i)\\
    \geq\ & \psi\left(\sum_{i\in J_0}a_ix'_iy'_i + \sum_{i\in J_1}(a_ix'_iy'_i-a_i)\right)
    \geq\  \phi\left(\sum_{i\in J_0}a_ix'_iy'_i + \sum_{i\in J_1}a_ix'_iy'_i-\sum_{i\in J_1}a_i\right)\\
    =\ & \max_{(x_I,y_I)\in[0,1]^{2|I|}}\left\{r-h(x_I,y_I)\ \,\Bigm|\,\sum_{i\in I} a_ix_iy_i\geq d-\sum_{i\in J_0\cup J_1}a_ix'_iy'_i \right\}
    \geq\ r-h(x'_I, y'_I),
\end{align*}
where the first inequality holds because of assumptions~\ref{item:seqind:3} and \ref{item:seqind:4}, 
the second inequality holds because assumption~\ref{item:seqind:2} requires $\psi(\cdot)$ to be subadditive over its range, 
the third inequality holds because assumption~\ref{item:seqind:1} requires $\psi(\cdot)$ to be an upper bound on $\phi(\cdot)$, the equality holds from the definition of $\phi(\cdot)$, and the last inequality is satisfied because $(x_I',y_I')$ is a feasible solution to the preceding optimization problem. 
\qed
\end{proof}


\section{Proof of Theorem~\ref{thm:minimal}}
\label{section:minimal}
\begin{proof}[Theorem~\ref{thm:minimal}]
Suppose $Q\neq\emptyset$. 
It follows from \cite{dey2019new} that the extreme points of $Q$ are such that $(x^*_j,y^*_j)\in\{ 0,1\}^2$ for all $j\in [n]\backslash\{i\}$ for some $i \in [n]$.

Assume first that $Q$ has an extreme point $(x^*,y^*)$ where   $x^*_iy^*_i\notin \{0,1\}$ for some $i \in [n]$ with $a_i>0$. 
Define the partition $\Lambda$ with
$I=\{i\}$, $J_0 = \{j|x^*_jy^*_j = 0\}$, and $J_1 = \{j|x^*_jy^*_j = 1\}$. 
Since $\sum_{j=1}^n a_jx^*_jy^*_j = d$, we have
$a_i>a_ix^*_iy^*_i = d-\sum_{i\in J_1}a_j = d^\Lambda>0$.
Since $d^\Lambda>0$, we conclude that $\Lambda$ is a minimal cover yielding partition.

Assume second that all extreme points $(x^*,y^*)$ are such that $(x^*_i, y^*_i)\in\{0,1\}^2$ for all $i \in [n]$ with $a_i>0$. 
Denote $I_+ = \{i \in [n] \,|\, a_i>0\}$ and, for $K_1, K_2\subseteq I_+$, define
$$
Q_{K_1, K_2}:=\conv
\left\{\ (x,y)\in[0,1]^{2n}
\ \left|\ \begin{array}{l}
\ \sum_{i=1}^n a_ix_iy_i\geq d\\
\ x_i = 0,\ i\in K_1\\
\ y_i = 0,\ i\in K_2\\
\ x_i = 1,\ i\in I_+\backslash K_1\\
\ y_i = 1,\ i\in I_+\backslash K_2
\end{array} \ \right.\right\}.
$$
It is clear that $\conv(Q) = \conv(\bigcup_{K_1, K_2\subseteq I_+} Q_{K_1, K_2})$.
Because, for any $K_1, K_2 \subseteq I_+$, $Q_{K_1, K_2}$ is a polytope~\cite[Proposition 17]{richard2010liftingframework}, we conclude that $\conv(Q)$ is a polytope.
\qed
\end{proof}

\section{Proof of Theorem~\ref{thm:valid}}
\label{section:valid}
\begin{proof}[Theorem~\ref{thm:valid}]
For $i \in [n]$, define
\begin{eqnarray*}
Q_i = \left\{ \ (x,y) \in [0,1]^{2n} \ \Biggm| \
\begin{array}{l}
(x_j,y_j)=(1,1), \,\forall j \in [n]\backslash i \\
\sqrt{a_i} \sqrt{x_iy_i} \ge \sqrt{d_i} 
\end{array} \
\right\}.
\end{eqnarray*}
First observe that, because $a_i$ for $i \in [n]$ form a minimal cover, we have that $a_i>d_i:=d-\sum_{j \neq i} a_j$ for each $i$.
This implies that sets $Q_i$ are nonempty. 
We next argue that $\conv(Q)=\conv(\bar{Q})$ where $\bar{Q}:=\bigcup_{i=1}^n Q_i$.
To this end, consider any extreme point $(x,y)$ of $Q$.
Then, \cite{dey2019new} shows that there exists a partition $(I_0,I_1,\{i\})$ of $[n]$ such that $x_jy_j=0$ for $j \in I_0$, $x_jy_j=1$ for $j \in I_1$ and $x_iy_i \in [0,1]$.
Because $a_i$ for $i \in [n]$ form a minimal cover, it must be that $|I_0|=0$ as otherwise $\sum_{j=1}^n a_jx_jy_j \le \sum_{j \in I_1} a_j < d$.
We conclude that $(x,y) \in Q_i$.
Since $Q$ is compact, it follows that $\conv(Q) \subseteq \conv(\bar{Q})$. 
Further, since $Q_i \subseteq \bar{Q} \subseteq Q$, it is clear that $\conv(\bar{Q}) \subseteq \conv(Q)$.

 
We now use disjunctive programming to obtain an extended formulation of $\conv(\bar{Q})$. 
This formulation introduces convex multipliers $\lambda_i$ and copies $(x^i,y^i)$ of variables $(x,y)$ for each disjunct $Q_i$.  
Because disjunct $Q_i$ yields constraints $y^i_j=x^i_j=\lambda_i$ for $j \neq i$, variables $x^i_j$ and $y^i_j$ for $j \neq i$ can be eliminated from the formulation in favor of $\lambda_i$. 
Renaming variables $x^i_i$ as $\hat{x}_i$, we obtain
\begin{eqnarray*}
\begin{array}{rcll}
x_j &=& \hat{x}_j + \sum_{i \neq j} \lambda_i  &\ \forall j \in [n] \\
y_j &=& \hat{y}_j + \sum_{i \neq j} \lambda_i &\ \forall j \in [n] \\
\sqrt{a_i}\sqrt{\hat{x}_i \hat{y}_i } &\geq& \sqrt{d_i} \lambda_i &\ \forall i \in [n]\\
\lambda_i &\geq& \hat{x}_i,\hat{y}_i \geq 0 &\ \forall i \in [n]\\
\sum_{i = 1}^n \lambda_i &=& 1&\\
\end{array}
\end{eqnarray*}
because the constraint functions of each $Q_i$ are positively homogeneous. 

Using the fact that $\sum_{i \neq j} \lambda_i  = 1 - \lambda_ j$, we obtain
$\hat{x}_j = x_j -(1-\lambda_j)$ and
$\hat{y}_j = y_j -(1-\lambda_j)$.
Eliminating these variables from the formulation, we obtain
\begin{eqnarray}\label{eq:nomoreFM}
\begin{array}{rcll}
\sqrt{a_i}\sqrt{(x_i - (1 - \lambda_i))\cdot (y_i - (1 - \lambda_i)) } &\geq& \sqrt{d_i} \lambda_i &\ \forall i \in [n]\\
1-\lambda_i \le x_i,y_i &\leq &1  &\ \forall i \in [n]\\
\sum_{i = 1}^n \lambda_i &=& 1.&\\
\end{array}
\end{eqnarray}
Because projecting variables $\lambda_i$ from the above formulation seems difficult, we relax the above set by using, for each $i \in [n]$ the following inequality 
\begin{eqnarray}
\sqrt{a_i} \left( \sqrt{x_i y_i}  - (1 - \lambda_i) \right) \geq \sqrt{a_i} \sqrt{(x_i - (1 - \lambda_i))\cdot (y_i - (1 - \lambda_i)) }, \label{eq:ag}
\end{eqnarray}
which holds as
$\left(\sqrt{x_iy_i}-(1-\lambda_i)\right)^2 \ge  x_iy_i - (x_i+y_i)(1-\lambda_i) + (1-\lambda_i)^2 =  (x_i-(1-\lambda_i))(y_i-(1-\lambda_i))$
where the first inequality is obtained by expanding the square and using the arithmetic-geometry mean inequality $-2\sqrt{x_iy_i} \ge -(x_i+y_i)$.
Substituting (\ref{eq:ag}) in (\ref{eq:nomoreFM}), we obtain:
\begin{eqnarray}
\begin{array}{rcll}
\lambda_i &\geq & \frac{\sqrt{a_i}}{\sqrt{a_i} - \sqrt{d_i}}\left( 1 - \sqrt{x_iy_i}\right)&\ \forall i \in [n]\\
\lambda_i &\geq& 1 - x_i  &\ \forall i \in [n]\\
\lambda_i &\geq& 1 - y_i  &\ \forall i \in [n]\\
x_i,y_i &\leq &1  &\ \forall i \in [n]\\
\sum_{i = 1}^n \lambda_i &=& 1.&\\
\end{array}
\end{eqnarray}
Using Fourier-Motzkin to project variables $\lambda_i$,   we obtain $(x, y) \in [0, 1]^{2n}$ together with
\begin{eqnarray*}
\sum_{i = 1}^n \textup{max}\left\{ \frac{\sqrt{a_i}}{\sqrt{a_i} - \sqrt{d_i}}\left( 1 - \sqrt{x_iy_i}\right), 1 - x_i, 1 - y_i \right\} \leq 1,
\end{eqnarray*}
which is a convex inequality. 
Retaining only the first term in the maximum for each pair $(x_i,y_i)$ and multiplying through by $-1$ yields the weaker convex inequality (\ref{eq:bilincoverineq}).
\qed
\end{proof}

\section{Proof of Theorem~\ref{thm:strcovercon} }
\label{section:strcovercon}

In this section, we provide a proof of Theorem~\ref{thm:strcovercon}.
We say that $G\in\Real^n$ is a \textit{set of the covering type} if whenever $\hat{x}\in G$, then $\tilde{x}\in G$ for all $\tilde{x} \in \Real^n$ such that $\tilde{x}\geq \hat{x}$. 
Due to lack of space we skip the proof of the next proposition; see~\cite{bodur2018aggregation} for a similar result.
\begin{proposition}
\label{prop:eqposcone}
Let $B=[0,1]^n$ and let $G$ and $H$ be sets of the covering type, such that $\conv(G\cap B)\subseteq H$. 
If there exists $\theta\geq 1$, such that for any $c \geq 0$, 
$ z^l\leq z^*\leq \theta z^l$,
where $z^*:=\min\{c\tr x|x\in G\cap B\}$ and $z^l:=\min\{c\tr x|x\in H\cap B\}$, then
$
(\theta H) \cap B \subseteq \conv(G\cap B).$
\end{proposition}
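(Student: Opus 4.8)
The plan is to argue by contradiction using a separation argument, combined with the hypothesis applied to the normal vector of the separating hyperplane. Fix $\bar{x} \in (\theta H) \cap B$ and suppose, for contradiction, that $\bar{x} \notin \conv(G \cap B)$. Since $G \cap B \subseteq B$ is bounded and we may assume it is closed (so that $\conv(G\cap B)$ is a compact convex set), the separating hyperplane theorem furnishes a nonzero $c$ with $c\tr \bar{x} < z^* := \min\{c\tr x \mid x \in G \cap B\}$. We may also assume $G\cap B\neq\emptyset$, as otherwise the claimed inclusion is vacuous.

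The crucial reduction is to show that $c$ may be taken to satisfy $c \ge 0$; this is where the covering-type structure of $G$ enters, and it is exactly what lets us invoke the hypothesis, which is stated only for $c\geq 0$. Let $N := \{j \mid c_j < 0\}$ and set $c'$ to be the componentwise nonnegative part of $c$. For any $x \in G \cap B$, lifting the coordinates in $N$ to $1$ produces a point $\hat{x} \ge x$ with $\hat{x} \in B$; since $G$ is of covering type, $\hat{x} \in G \cap B$. Applying the separation inequality to $\hat{x}$ and expanding yields $c'\tr \bar{x} < c'\tr x + \sum_{j \in N} c_j(1 - \bar{x}_j)$, and because $c_j < 0$ and $1 - \bar{x}_j \ge 0$ on $N$, the correction term is nonpositive, giving $c'\tr \bar{x} < c'\tr x$ for every $x \in G \cap B$. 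A short separate check shows $c' \ne 0$: if $c \le 0$ entirely then $\ONE \in G \cap B$ (it dominates any point of the nonempty $G\cap B$, so covering type applies) is a minimizer and $c\tr \bar{x} \ge c\tr \ONE = z^*$, contradicting the separation. Thus $c'$ is a nonnegative strict separator, and I rename it $c$.

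With $c \ge 0$ in hand, I invoke the hypothesis to get $z^* \le \theta z^l$, where $z^l := \min\{c\tr x \mid x \in H \cap B\}$. Next I exploit the scaling: writing $\bar{x} = \theta h$ with $h \in H$, the bound $\theta \ge 1$ together with $\bar{x} \in [0,1]^n$ forces $h = \bar{x}/\theta \in [0,1]^n = B$, so $h \in H \cap B$ and hence $z^l \le c\tr h = \tfrac{1}{\theta} c\tr \bar{x}$, i.e.\ $\theta z^l \le c\tr \bar{x}$. Chaining these facts gives $c\tr \bar{x} < z^* \le \theta z^l \le c\tr \bar{x}$, the desired contradiction. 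Therefore $\bar{x} \in \conv(G \cap B)$, which proves $(\theta H) \cap B \subseteq \conv(G\cap B)$.

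I expect the main obstacle to be the middle step: justifying rigorously that the separating hyperplane can be chosen with a nonnegative normal vector. Everything else is essentially bookkeeping, but this reduction is the only place the covering-type hypothesis on $G$ is used, and it is what reconciles an arbitrary separator with the hypothesis restricted to $c\geq 0$. A secondary technical point worth stating explicitly is the closedness of $G \cap B$ (equivalently, of $\conv(G\cap B)$), which is needed for strict separation; in the intended applications $G$ is closed, so this assumption is harmless.
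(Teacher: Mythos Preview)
The paper does not actually prove this proposition; it explicitly skips the proof ``due to lack of space'' and points to \cite{bodur2018aggregation} for a similar result. So there is no in-paper argument to compare against.

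Your proof is correct and follows the standard route for such statements: separate a hypothetical point outside $\conv(G\cap B)$, use the covering structure of $G$ to replace the separating normal by its nonnegative part, and then combine the ratio hypothesis $z^*\le\theta z^l$ with the elementary observation that $\bar{x}/\theta\in H\cap B$ to close the chain $c\tr\bar{x}<z^*\le\theta z^l\le c\tr\bar{x}$. The reduction to $c\ge 0$ is handled cleanly, including the edge case $c\le 0$. The only caveats you already flag---closedness of $G$ so that $\conv(G\cap B)$ is compact and strict separation applies, and nonemptiness of $G\cap B$---are exactly the mild regularity assumptions implicit in the paper's usage (where $G=Q$ is a closed, nonempty bilinear set).
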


Following Proposition~\ref{prop:eqposcone}, 
Theorem~\ref{thm:strcovercon} will be proven if, for all $(p, q) \in \Real^{2n}_+$,
\begin{align*}
    z^*&:=\min\left\{\ \sum_{i=1}^n (p_ix_i+q_iy_i) \ \Bigm| \ \sum_{i=1}^n a_ix_iy_i\geq d,\  (x,y) \in[0,1]^{2n} \ \right\}\\
    z^l&:=\min\left\{\ \sum_{i=1}^n (p_ix_i+q_iy_i) \ \Bigm| \ \sum_{i=1}^n \frac{\sqrt{a_i}}{\sqrt{a_i}-\sqrt{d_i}}(\sqrt{x_iy_i}-1)\geq -1,\  (x,y)\in[0,1]^{2n} \ \right\}
\end{align*}
satisfy $z^l\leq z^*\leq 4z^l$.
To this end, we prove first four ancillary results in Lemmas~\ref{lem:thetacompute}-\ref{lem:rangealpha}. 

\begin{assumption}\label{assp:1}
$p_i \geq q_i$, $\forall i \in [n]$.
\end{assumption}
Assumption~\ref{assp:1} is without loss of generality as it can always be achieved by renaming variables $x_i$ as $y_i$, if necessary.

\begin{lemma}\label{lem:thetacompute}
For $\alpha \in [0, 1]$
and $i \in [n]$, define
\begin{eqnarray*}
\theta_i(\alpha) = &\textup{min} \left\{ p_i x_i + q_i y_i \Bigm|  
\begin{array}{l}
\sqrt{x_i y_i } = \alpha\\
(x_i, y_i) \in [0,1]^2 \\
\end{array}
\right\}.
\end{eqnarray*}
Then, $\theta_i(\alpha)=0$ when $p_i=0$. 
Further, when $p_i>0$, 
\begin{eqnarray*}
\theta_i(\alpha) = \left\{\begin{array}{lcl} 
2\sqrt{p_iq_i}\cdot\alpha\ &\textup{when}& \alpha \leq \sqrt{\frac{q_i}{p_i}}\\  
p_i\cdot\alpha^2 + q_i\ &\textup{when}& \alpha \geq \sqrt{\frac{q_i}{p_i}}.
\end{array}\right.
\end{eqnarray*}
\end{lemma}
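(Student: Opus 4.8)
This is a straightforward optimization problem to solve in closed form, so I'll sketch the approach.

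\textbf{Proof proposal for Lemma~\ref{lem:thetacompute}.}

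The plan is to directly solve the two-variable optimization problem defining $\theta_i(\alpha)$ by eliminating one variable using the equality constraint $\sqrt{x_iy_i}=\alpha$, i.e. $x_iy_i=\alpha^2$. First I would dispose of the trivial case: if $p_i=0$, then by Assumption~\ref{assp:1} we also have $q_i=0$, so the objective is identically zero and $\theta_i(\alpha)=0$. (If $\alpha=0$ one should also note feasibility is witnessed by, say, $x_i=0$, $y_i$ arbitrary; for $\alpha>0$ feasibility requires $x_i,y_i\in[\alpha^2,1]$, which is nonempty since $\alpha\le 1$.)

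For the main case $p_i>0$ (hence $q_i\ge 0$), I would parametrize feasible points by $x_i\in[\alpha^2,1]$ with $y_i=\alpha^2/x_i\in[\alpha^2,1]$, and minimize the single-variable function $f(x_i):=p_ix_i+q_i\alpha^2/x_i$ over $[\alpha^2,1]$. Since $f$ is convex on $(0,\infty)$, its unconstrained minimizer is at $x_i=\alpha\sqrt{q_i/p_i}$ (when $q_i>0$; when $q_i=0$ the minimizer is at the left endpoint and the formula degenerates correctly). One then checks where this unconstrained minimizer falls relative to the interval $[\alpha^2,1]$: the condition $x_i=\alpha\sqrt{q_i/p_i}\ge \alpha^2$ is equivalent to $\alpha\le\sqrt{q_i/p_i}$ (using $p_i\ge q_i$, so $\sqrt{q_i/p_i}\le 1$, the other endpoint constraint $x_i\le 1$ is then automatic in that regime). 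Substituting: when $\alpha\le\sqrt{q_i/p_i}$ the interior minimizer is feasible and gives $f = 2\sqrt{p_iq_i}\,\alpha$; when $\alpha\ge\sqrt{q_i/p_i}$ the convex function is increasing on the feasible interval so the minimum is at the left endpoint $x_i=\alpha^2$, $y_i=1$, giving $f=p_i\alpha^2+q_i$. This matches the claimed formula, and I would note the two pieces agree at $\alpha=\sqrt{q_i/p_i}$, confirming continuity.

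There is essentially no hard obstacle here; the only things requiring a little care are (i) handling the boundary/degenerate cases $\alpha=0$, $q_i=0$, and $p_i=0$ cleanly, and (ii) being explicit about which endpoint constraint is active in each regime so that the case split is genuinely exhaustive. The heart of the argument is just convexity of $f(x_i)=p_ix_i+q_i\alpha^2/x_i$ together with the elementary observation that the feasible set for $x_i$ is the interval $[\alpha^2,1]$.
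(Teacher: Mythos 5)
Your proposal is correct and follows essentially the same route as the paper: eliminate one variable via $x_iy_i=\alpha^2$, reduce to a one-dimensional convex minimization over an interval, and locate the stationary point relative to the interval endpoints (the paper eliminates $x_i$ in favor of $y_i$ rather than the reverse, which is immaterial). The degenerate cases $p_i=0$ and $q_i=0$ are handled the same way, via Assumption~\ref{assp:1}.
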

\begin{proof} 
When $p_i=0$, it follows from Assumption~\ref{assp:1} that $q_i=0$. 
The result holds trivially. 
For $p_i>0$,  setting $x_i = \alpha^2/y_i$, we write $\theta_i(\alpha)=\min \{ p_i \sfrac{\alpha^2}{y_i} +q_iy_i \,|\, \alpha^2 \le y_i \le 1 \}$, a problem with linear constraints and a convex objective over $\Real_+$.
When $q_i=0$, $y^*_i=1$ is optimal and the result follows as $x^*_i=\alpha^2$.
When $q_i>0$, the problem has $y^*_i =  \sqrt{\sfrac{p_i}{q_i}}\alpha \ge \alpha \ge \alpha^2$ as unique stationary point over $\Real_+$.
We conclude that $\bar{y}_i = \min\{y^*_i,1\}$ is optimal for the constrained problem. 
\qed
\end{proof}

\begin{lemma}\label{lem:zoptstruc}
Let $\alpha^*_i := \sqrt{\frac{d_i}{a_i}}$. 
Then,
$z^{*} = \textup{min}_{i \in [n]} \left\{ \sum_{j \in [n]\setminus \{i\}}(p_j + q_j)  + \theta_i(\alpha^*_i) \right\}$.
\end{lemma}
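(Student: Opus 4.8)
\textbf{Proof proposal for Lemma~\ref{lem:zoptstruc}.}

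The plan is to first reduce the problem~$z^*$ to one where all but one pair of variables is fixed at a vertex of $[0,1]^2$, and then optimize that one remaining pair. Since $z^*$ minimizes a linear (hence concave) objective over the bipartite bilinear set $Q$, an optimal solution can be taken to be an extreme point of $Q$. By the characterization of extreme points of bipartite bilinear sets from~\cite{dey2019new}, there is an index $i \in [n]$ such that $(x_j,y_j)\in\{0,1\}^2$ for all $j \neq i$, while $(x_i,y_i)$ is free in $[0,1]^2$. Because $a_j > 0$ for all $j$ (minimal cover) and the constraint $\sum_j a_j x_j y_j \ge d$ must hold, we need $\sum_{j\neq i} a_j x_j y_j + a_i x_i y_i \ge d$; since the $a_j$ form a minimal cover, $\sum_{j\neq i} a_j \le d$, so in fact all the fixed pairs must satisfy $x_j y_j = 1$, i.e. $(x_j,y_j)=(1,1)$ for $j\neq i$. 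This forces $a_i x_i y_i \ge d - \sum_{j\neq i} a_j = d_i$, equivalently $\sqrt{x_i y_i} \ge \sqrt{d_i/a_i} = \alpha^*_i$.

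With the index~$i$ fixed, the cost contributed by the fixed pairs is exactly $\sum_{j\neq i}(p_j+q_j)$, and the remaining contribution is $\min\{ p_i x_i + q_i y_i \mid \sqrt{x_i y_i}\ge \alpha^*_i,\ (x_i,y_i)\in[0,1]^2 \}$. Here I would invoke Lemma~\ref{lem:thetacompute}: since $p_i,q_i\ge 0$, the function $\alpha \mapsto \theta_i(\alpha)$ is nondecreasing on $[0,1]$ (it is piecewise of the form $2\sqrt{p_iq_i}\alpha$ then $p_i\alpha^2+q_i$, both nondecreasing, and continuous at the breakpoint), so the minimum of $p_i x_i + q_i y_i$ over $\sqrt{x_i y_i}\ge \alpha^*_i$ is attained at $\sqrt{x_i y_i}=\alpha^*_i$ and equals $\theta_i(\alpha^*_i)$. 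Note $\alpha^*_i\le 1$ because $d_i = a_i - \Delta \le a_i$ (from the remark after Notation~\ref{not:1}), so the relevant range of $\theta_i$ is exactly $[0,1]$ as in Lemma~\ref{lem:thetacompute}.

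Taking the best choice of the free index~$i$ gives $z^* = \min_{i\in[n]}\{ \sum_{j\neq i}(p_j+q_j) + \theta_i(\alpha^*_i)\}$, which is the claimed formula. The main point requiring care — the step I expect to be the real content rather than the optimization bookkeeping — is the argument that every fixed pair must take the value $(1,1)$: this is where the minimal cover hypothesis enters, through $\sum_{j\in K} a_j \le d$ for all proper subsets $K\subsetneq[n]$, ruling out any extreme point with a fixed pair at $0$. One should also check the degenerate direction: conversely, for each $i$ the point with $(x_j,y_j)=(1,1)$ for $j\neq i$ and $(x_i,y_i)$ achieving $\theta_i(\alpha^*_i)$ is feasible for $z^*$, so the displayed minimum is actually attained and the identity holds with equality.
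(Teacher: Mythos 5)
Your proposal is correct and follows essentially the same route as the paper: the paper's proof also reduces to extreme points, cites the argument in the proof of Theorem~\ref{thm:valid} (which is exactly your minimal-cover argument forcing $(x_j,y_j)=(1,1)$ for $j\neq i$) to restrict to the sets $Q_i$, and then uses $p_i\ge q_i\ge 0$ to fix $\sqrt{x_iy_i}=\alpha_i^*$. The only cosmetic difference is that you inline the extreme-point/minimal-cover step rather than referencing it, and you justify the final reduction via monotonicity of $\theta_i$ instead of directly via nonnegativity of the costs; both are sound.
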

\begin{proof} 
Since an optimal solution to the problem defining $z^*$ can always be chosen among the extreme points of $Q$ and since the proof of Theorem~\ref{thm:valid} in Section~\ref{section:valid} establishes that extreme points of $Q$ belong to $\bigcup_{i=1}^n Q_i$, we write that $z^*=\min_{i \in [n]} \min\{ p\tr x+q\tr y \,|\, (x,y) \in Q_i\}$. 
Points of $Q_i$ satisfy $x_j=y_j=1$ for $j \neq i$ and $a_ix_iy_i \ge d_i$. 
Since $p_i \ge q_i \ge 0$, it suffices to consider solutions that satisfy $\sqrt{x_iy_i}= \sqrt{\sfrac{d_i}{a_i}}=\alpha_i^*$ in the above problem, yielding the result. 
\qed
\end{proof}

Rearranging the variables if necessary, assume from now on that
$
z^* = \sum_{i \in [n-1]}(p_i + q_i) + \theta_n(\alpha^*_n).
$
As a consequence of this assumption and Lemma~\ref{lem:zoptstruc}, we obtain that 
\begin{eqnarray}\label{eq:uselaterpairwise}
\theta_j(\alpha^*_j) + p_n + q_n \geq \theta_n(\alpha^*_n) + p_j + q_j,  \quad \forall j \in [n].
\end{eqnarray}

\begin{lemma}\label{lem:termwiselower}
 Let $\tau_i(\alpha) = (p_i + q_i)\cdot\alpha^2$. 
 Then  $\tau_i(\alpha) \leq \theta_i(\alpha)$ for $\alpha \in [0, 1]$.
\end{lemma}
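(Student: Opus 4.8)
The plan is to prove the bound directly by substituting the closed-form expression for $\theta_i(\alpha)$ established in Lemma~\ref{lem:thetacompute} and verifying the inequality on each of the two relevant pieces. First I would dispose of the degenerate cases: if $p_i = 0$, then Assumption~\ref{assp:1} forces $q_i = 0$, so $\tau_i \equiv \theta_i \equiv 0$ and there is nothing to check; and if $\alpha = 0$, both sides vanish. Hence I may assume $p_i > 0$ and $\alpha \in (0,1]$, in which case Lemma~\ref{lem:thetacompute} applies and splits the analysis at the threshold $\alpha = \sqrt{q_i/p_i}$ (which lies in $[0,1]$ since $q_i \le p_i$).

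On the upper piece $\alpha \ge \sqrt{q_i/p_i}$, where $\theta_i(\alpha) = p_i\alpha^2 + q_i$, the claim $(p_i+q_i)\alpha^2 \le p_i\alpha^2 + q_i$ rearranges to $q_i\alpha^2 \le q_i$, which holds because $\alpha \le 1$. On the lower piece $\alpha \le \sqrt{q_i/p_i}$, where $\theta_i(\alpha) = 2\sqrt{p_iq_i}\,\alpha$, dividing through by $\alpha > 0$ reduces the claim to $(p_i+q_i)\alpha \le 2\sqrt{p_iq_i}$; since the left-hand side is increasing in $\alpha$ and $\alpha \le \sqrt{q_i/p_i}$, it suffices to check the endpoint, i.e.\ $(p_i+q_i)\sqrt{q_i/p_i} \le 2\sqrt{p_iq_i}$. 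Clearing denominators (and using $q_i > 0$ on this piece, the case $q_i = 0$ again being trivial as then $\alpha = 0$), this is equivalent to $p_i + q_i \le 2p_i$, which is precisely Assumption~\ref{assp:1}.

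The argument is entirely elementary: the only ingredients are the explicit formula for $\theta_i$ from Lemma~\ref{lem:thetacompute} and the normalization $p_i \ge q_i$. I do not anticipate any real obstacle; the only point requiring mild care is keeping track of the boundary cases $\alpha = 0$ and $q_i = 0$ so that the division steps above are legitimate, and these are immediate.
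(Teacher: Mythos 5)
Your proof is correct and follows essentially the same route as the paper's: split according to the two pieces of $\theta_i$ from Lemma~\ref{lem:thetacompute}, handle the piece $\alpha \ge \sqrt{q_i/p_i}$ via $q_i\alpha^2 \le q_i$, and handle the other piece using $p_i \ge q_i$ (the paper substitutes $\sqrt{q_i}\ge\sqrt{p_i}\alpha$ directly to get $2\sqrt{p_iq_i}\,\alpha \ge 2p_i\alpha^2 \ge (p_i+q_i)\alpha^2$, while you check the endpoint, but this is an immaterial difference). No gaps.
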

\begin{proof}
When $p_i=0$, the result is clear. Assume therefore that $p_i>0$.
When $\alpha \ge \sqrt{\sfrac{q_i}{p_i}}$, we write that $\theta_i(\alpha)=p_i \alpha^2 + q_i \ge p_i \alpha^2 + q_i \alpha^2 = \tau_i(\alpha)$, where the inequality holds because $\alpha \in [0,1]$.
When $\alpha \le \sqrt{\sfrac{q_i}{p_i}}$ (or equivalently $\sqrt{q_i} \ge \sqrt{p_i} \alpha)$, we write that
$\theta_i(\alpha)= 2\sqrt{q_i}\sqrt{p_i} \alpha \ge 2 p_i \alpha^2 \ge (p_i+q_i) \alpha^2$, where the last inequality holds because $p_i\ge q_i \ge 0$.
\end{proof}

\begin{lemma}\label{lem:rangealpha}
Assume that $(x,y)\in[0,1]^{2n}$ satisfies (\ref{eq:bilincoverineq}), \textit{i.e.,} $\sum_{i = 1}^n \frac{\sqrt{a_i}}{\sqrt{a_i} - \sqrt{d_i} } (\sqrt{x_iy_i} - 1 )\geq -1 $.
Define $\alpha_i = \sqrt{x_iy_i}$ for $i \in [n]$. 
Then 
(i) $\alpha^*_i \leq \alpha_i$ for all $i\in [n]$, (ii) $\alpha_i < \frac{1}{2}$ for at most one $i \in [n]$.
\end{lemma}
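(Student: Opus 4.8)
The plan is to analyze each of the two claims separately, both exploiting the structure of the coefficients $\frac{\sqrt{a_i}}{\sqrt{a_i}-\sqrt{d_i}}$ appearing in (\ref{eq:bilincoverineq}) together with the minimal-cover identity $d_i = a_i - \Delta$ from Notation~\ref{not:1}. For part (i), I would argue by contradiction: suppose $\alpha_j < \alpha_j^* = \sqrt{d_j/a_j}$ for some $j$. Since every coefficient $c_i := \frac{\sqrt{a_i}}{\sqrt{a_i}-\sqrt{d_i}}$ is positive (as $a_i > d_i$ for a minimal cover) and each $\sqrt{x_iy_i}-1 \le 0$ on the box, the left-hand side of (\ref{eq:bilincoverineq}) is maximized termwise by taking $\alpha_i = 1$ for all $i \ne j$, which contributes $0$ from those terms. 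Hence (\ref{eq:bilincoverineq}) forces $c_j(\alpha_j - 1) \ge -1$, i.e.\ $\alpha_j \ge 1 - \frac{1}{c_j} = 1 - \frac{\sqrt{a_j}-\sqrt{d_j}}{\sqrt{a_j}} = \frac{\sqrt{d_j}}{\sqrt{a_j}} = \alpha_j^*$, contradicting the assumption. So $\alpha_i \ge \alpha_i^*$ for every $i$.

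For part (ii), suppose for contradiction that $\alpha_i < \tfrac12$ for two distinct indices, say $i = 1, 2$. I would again use the fact that all other terms contribute at most $0$ to the left side of (\ref{eq:bilincoverineq}), so that $c_1(\alpha_1 - 1) + c_2(\alpha_2 - 1) \ge -1$. Since $\alpha_1, \alpha_2 < \tfrac12$, this gives $-\tfrac12(c_1 + c_2) > c_1(\alpha_1-1) + c_2(\alpha_2-1) \ge -1$, hence $c_1 + c_2 < 2$. The crux is then to show that this is impossible, i.e.\ that $c_1 + c_2 \ge 2$ always holds under the minimal-cover hypothesis — in fact one should be able to show each individual $c_i \ge 1$. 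Indeed $c_i = \frac{\sqrt{a_i}}{\sqrt{a_i}-\sqrt{d_i}} \ge 1$ is equivalent to $\sqrt{a_i} \ge \sqrt{a_i} - \sqrt{d_i}$, i.e.\ $\sqrt{d_i} \ge 0$, which holds since $d_i = a_i - \Delta \ge 0$ for a minimal cover (using $a_i \ge \Delta$, noted right after Notation~\ref{not:1}). Thus $c_1 + c_2 \ge 2$, a contradiction, which proves (ii).

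I expect the only subtlety to be bookkeeping around the degenerate case $d_i = 0$ (equivalently $a_i = \Delta$), where $\alpha_i^* = 0$ and $c_i = 1$; one must check the displayed inequalities still read correctly there (they do, with $c_i = 1$), and that dividing by $c_i$ in part (i) is legitimate (it is, since $c_i \ge 1 > 0$). The main conceptual step — and it is a short one — is recognizing that for an inequality of covering type with nonnegative coefficients and variables capped at $1$, the binding analysis reduces to a single term, after which both claims follow from the elementary bound $c_i \ge 1$.
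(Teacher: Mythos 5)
Your proposal is correct and follows essentially the same route as the paper: part (i) by observing that all other terms of (\ref{eq:bilincoverineq}) contribute at most zero, and part (ii) by the two-index contradiction resting on the bound $\frac{\sqrt{a_i}}{\sqrt{a_i}-\sqrt{d_i}} \ge 1$, which the paper uses implicitly in the step $\sum_{i\in\{i_1,i_2\}} \frac{\sqrt{a_i}}{\sqrt{a_i}-\sqrt{d_i}}\left(-\frac{1}{2}\right) \le -1$. Your explicit verification that $c_i \ge 1$ via $d_i = a_i - \Delta \ge 0$ is a welcome (if minor) addition of detail.
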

\begin{proof} 
Statement~(i) trivially holds, as any $\sqrt{x_iy_i}<\alpha_i^*=\sqrt{\sfrac{d_i}{a_i}}$ invalidates (\ref{eq:bilincoverineq}), even if we set $x_j=y_j=1$ for $j\in[n]\backslash\{i\}$.
For (ii), assume by contradiction there exists distinct indices $i_1$ and $i_2$ in $[n]$ such that $\alpha_{i_1} \le \alpha_{i_2}<\frac{1}{2}$. 
Then
$
\sum_{i=1}^n \frac{\sqrt{a_i}}{\sqrt{a_i} - \sqrt{d_i} } (\sqrt{x_iy_i} - 1 ) 
< \sum_{i \in \{i_1,i_2\}} \frac{\sqrt{a_i}}{\sqrt{a_i} - \sqrt{d_i} } \left(-\frac{1}{2} \right)
\leq \sum_{i \in \{i_1,i_2\}} \left(-\frac{1}{2} \right) = -1,
$
which violates (\ref{eq:bilincoverineq}).
\qed
\end{proof}

We are now ready to give a proof of Theorem~\ref{thm:strcovercon} that inequality (\ref{eq:bilincoverineq}) yields strong bounds for optimization problems over $Q$. 
\begin{proof}[Theorem~\ref{thm:strcovercon}]
Let $(\tilde{x},\tilde{y})$ be an optimal solution for the relaxation defining $z^l$ and let $\tilde{\alpha}_i = \sqrt{\tilde{x}_i\tilde{y}_i}$.
From Lemma~\ref{lem:rangealpha}, it is sufficient to consider the following three cases.

First assume that $\tilde{\alpha}_j \le \frac{1}{2}$ for some $j < n$.  
Lemma~\ref{lem:rangealpha} implies $\tilde{\alpha}_i \ge \frac{1}{2}$ for $i \neq j$.
Then 
\begin{eqnarray*}
4z^l &\ = \ & 4\sum_{i = 1}^n \theta_i(\tilde{\alpha}_i)  
\ \geq \ 4\left(\sum_{i \in [n]\setminus \{j\}} (p_i +q_i)\tilde{\alpha}_i^2 + \theta_j(\tilde{\alpha}_j)  \right) \\
&\geq& 4\left(\sum_{i \in [n]\setminus \{j\}} (p_i +q_i)\frac{1}{4} + \theta_j(\tilde{\alpha}_j)  \right) 
 \ = \ \sum_{i \in [n]\setminus \{j\}} (p_i +q_i) + 4\theta_j(\tilde{\alpha}_j) \\
 &\geq &\sum_{i \in [n]\setminus \{j,n\}} (p_i +q_i) + \theta_j(\alpha^*_j) + (p_n +q_n)\\
&\geq& \sum_{i \in [n]\setminus \{j,n\}} (p_i +q_i) + p_j +q_j + \theta_n(\alpha^*_n)\ = \ z^*,
\end{eqnarray*}
where the first inequality holds because of Lemma~\ref{lem:termwiselower}, 
the second inequality holds because $\alpha_i \geq \frac{1}{2}$ for $i \neq j$, 
the third inequality is because $\alpha^*_j \leq \tilde{\alpha}_j$ from Lemma~\ref{lem:rangealpha} and because $\theta_j$  is monotonically increasing, 
and the fourth inequality holds because of (\ref{eq:uselaterpairwise}).

Second assume that $\tilde{\alpha}_n \leq \frac{1}{2}$.
Lemma~\ref{lem:rangealpha} implies that $\tilde{\alpha}_i \ge \frac{1}{2}$ for $i < n$.
Similarly,
$
4z^l = 4\sum_{i = 1}^n \theta_i(\tilde{\alpha}_i)  
\geq 4(\sum_{i = 1}^{n-1} (p_i +q_i)\tilde{\alpha}_i^2 + \theta_n(\tilde{\alpha}_n) )
\geq 4(\sum_{i =1}^{n-1} (p_i +q_i)\frac{1}{4} + \theta_n(\tilde{\alpha}_n)  ) 
 = \sum_{i=1}^{n-1} (p_i +q_i) + 4\theta_n(\tilde{\alpha}_n)
 \geq \sum_{i=1}^{n-1} (p_i +q_i) + \theta_n(\alpha^*_n) 
 = z^*.
$

Finally assume that $\tilde{\alpha}_i \geq \frac{1}{2}$ for all $i$, and we use the same proof as just given.
\qed
\end{proof}

\section{Proof of Theorem~\ref{thm:upperbound}}
\label{section:upperbound}
In this section, we provide a proof of Theorem~\ref{thm:upperbound}, which gives 
a subadditive over-approximation to the lifting function of the minimal covering inequality. We first pose
\begin{assumption}\label{assp:2}
$0<\Delta \le a_1 \le a_2 \le \ldots \le a_n$.
\end{assumption}

Assumption~\ref{assp:2} can always be achieved by reordering the variables since the notion of minimal cover requires that $a_i \ge \Delta$ for $i \in [n]$; see discussion following Notation~\ref{not:1}.

We next present ancillary results in Lemmas~\ref{lm:decreasing}-\ref{lm:subadditive} and Proposition~\ref{prop:minimal-slope} that are used in the derivation of the approximation of the lifting function. 
The proof of Lemma~\ref{lm:decreasing} is straightforward and can be obtained by investigating signs of derivatives. 
\begin{lemma}\label{lm:decreasing}
For $u\geq \max\{\alpha, \beta\}$ where $\alpha, \beta >0$, the function $f(u):= \frac{\sqrt{u}-\sqrt{u-\alpha}}{\sqrt{u}-\sqrt{u-\beta}}$ is decreasing when $\alpha >\beta$ and increasing when $\alpha < \beta$. 
\end{lemma}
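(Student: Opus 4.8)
The plan is to reduce the lemma to monitoring the sign of a single square-root difference. The first move is to rationalize, using the identity $\sqrt{u}-\sqrt{u-t}=\frac{t}{\sqrt{u}+\sqrt{u-t}}$ (valid for $u\ge t>0$), to rewrite
$$
f(u)=\frac{\sqrt{u}-\sqrt{u-\alpha}}{\sqrt{u}-\sqrt{u-\beta}}=\frac{\alpha}{\beta}\cdot\frac{\sqrt{u}+\sqrt{u-\beta}}{\sqrt{u}+\sqrt{u-\alpha}}.
$$
This form is more convenient because the denominator $\sqrt{u}+\sqrt{u-\alpha}$ never vanishes on the domain, so the only delicate point is the left endpoint $u=\max\{\alpha,\beta\}$, at which one square root vanishes and $f$ is merely one-sidedly differentiable. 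I would therefore prove strict monotonicity on the open interval $(\max\{\alpha,\beta\},\infty)$ and then extend it to the closed interval $[\max\{\alpha,\beta\},\infty)$ using continuity of $f$.

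For the open interval I would differentiate. Setting $p(u)=\sqrt{u}+\sqrt{u-\beta}$ and $q(u)=\sqrt{u}+\sqrt{u-\alpha}$, a direct computation gives $p'(u)=\frac{p(u)}{2\sqrt{u}\sqrt{u-\beta}}$ and $q'(u)=\frac{q(u)}{2\sqrt{u}\sqrt{u-\alpha}}$, whence
$$
p'q-pq'=\frac{pq}{2\sqrt{u}}\left(\frac{1}{\sqrt{u-\beta}}-\frac{1}{\sqrt{u-\alpha}}\right)=\frac{pq\,\bigl(\sqrt{u-\alpha}-\sqrt{u-\beta}\bigr)}{2\sqrt{u}\,\sqrt{u-\alpha}\,\sqrt{u-\beta}}.
$$
Since $p$, $q$, and all the square roots are strictly positive on the open interval, and since $f'=\tfrac{\alpha}{\beta}\,\tfrac{p'q-pq'}{q^2}$ with $\tfrac{\alpha}{\beta}>0$, the sign of $f'(u)$ equals the sign of $\sqrt{u-\alpha}-\sqrt{u-\beta}$, i.e. the sign of $\beta-\alpha$. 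Hence $f'<0$ when $\alpha>\beta$ and $f'>0$ when $\alpha<\beta$, which is exactly the claim.

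A calculus-free alternative, which I would mention, is to write $\frac{\beta}{\alpha\,f(u)}=\frac{q(u)}{p(u)}=1+\frac{\sqrt{u-\alpha}-\sqrt{u-\beta}}{\sqrt{u}+\sqrt{u-\beta}}=1+\frac{\beta-\alpha}{\bigl(\sqrt{u-\alpha}+\sqrt{u-\beta}\bigr)\bigl(\sqrt{u}+\sqrt{u-\beta}\bigr)}$, using $\sqrt{u-\alpha}-\sqrt{u-\beta}=\frac{\beta-\alpha}{\sqrt{u-\alpha}+\sqrt{u-\beta}}$; the product in the denominator is positive and strictly increasing in $u$, so the whole right-hand side is strictly increasing in $u$ when $\beta>\alpha$ and strictly decreasing when $\beta<\alpha$, and inverting (recalling $\alpha,\beta>0$ and $f>0$) gives the stated monotonicity of $f$. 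Honestly there is no substantive obstacle here; the only thing requiring a sentence of care is the endpoint $u=\max\{\alpha,\beta\}$, and that is handled by the rationalized form together with the continuity argument above.
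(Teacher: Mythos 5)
Your main argument is correct and is exactly the route the paper has in mind: the paper gives no written proof of this lemma, remarking only that it ``is straightforward and can be obtained by investigating signs of derivatives,'' and your rationalization $f(u)=\frac{\alpha}{\beta}\cdot\frac{\sqrt{u}+\sqrt{u-\beta}}{\sqrt{u}+\sqrt{u-\alpha}}$ followed by the quotient-rule computation reducing the sign of $f'$ to the sign of $\beta-\alpha$ is a clean way to carry that out, with the endpoint handled properly by continuity. One caution about your optional calculus-free alternative: the identity should read $\frac{\alpha}{\beta f(u)}=\frac{q(u)}{p(u)}$ (you swapped $\alpha$ and $\beta$, though this is harmless for monotonicity), and more importantly the quantity $1+\frac{\beta-\alpha}{(\sqrt{u-\alpha}+\sqrt{u-\beta})(\sqrt{u}+\sqrt{u-\beta})}$ is strictly \emph{decreasing} in $u$ when $\beta>\alpha$ (positive numerator over an increasing positive denominator) and increasing when $\beta<\alpha$ --- the opposite of what you wrote; with that correction the inversion does yield the stated monotonicity of $f$, but as written the alternative's conclusion would come out backwards.
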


Lemma~\ref{lm:fractional-concave} establishes that the lifting function $\phi(\delta)$ exhibits local convexity.
\begin{lemma}
	\label{lm:fractional-concave}
	Any point $\delta$ of the lifting function $\phi(\delta)$ corresponding to an optimal solution $(x,y)$ with at least one index $i$ such that 
	$x_iy_i\in(0,1)$,
	is locally convex, \textit{i.e.}, there exists 
	$r>0$ {and $\xi$} such that
	 $\phi(\delta+\eta)\geq \phi(\delta)+\xi\eta$ for all $\eta \in [-r,r]$.
\end{lemma}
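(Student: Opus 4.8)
The plan is to exploit the fact that, in the lifting function $\phi$ associated with (\ref{eq:bilincoverineq}), both the objective $r-h$ and the single constraint depend on each pair $(x_i,y_i)$ only through the product $x_iy_i$, while $h$ itself depends on $(x_i,y_i)$ only through $\sqrt{x_iy_i}$. Since $\{x_iy_i\mid (x_i,y_i)\in[0,1]^2\}=[0,1]$, I would first reparametrize: writing $t_i:=x_iy_i$ and $c_i:=\tfrac{\sqrt{a_i}}{\sqrt{a_i}-\sqrt{d_i}}>0$, the lifting function equals
\[
\phi(\delta)=\max\Bigl\{\,-1+\textstyle\sum_{i\in I}c_i\bigl(1-\sqrt{t_i}\bigr)\;\Bigm|\;\sum_{i\in I}a_it_i\ge\bigl(d-\sum_{i\in J_1}a_i\bigr)-\delta,\ t\in[0,1]^{|I|}\,\Bigr\}.
\]
Under this reparametrization, the hypothesis ``there is an optimal $(x,y)$ with some $x_iy_i\in(0,1)$'' becomes ``there is an optimal $t^\ast$ with some $t_i^\ast\in(0,1)$,'' which supplies exactly the slack we need to absorb a perturbation of $\delta$.

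Next, given such an optimal $t^\ast$ for $\phi(\delta)$ with $t_i^\ast\in(0,1)$, I would perturb only the $i$-th coordinate: for $\eta$ near $0$ set $t_i(\eta):=t_i^\ast-\eta/a_i$ and $t_j(\eta):=t_j^\ast$ for $j\ne i$. If $|\eta|\le r:=\tfrac12 a_i\min\{t_i^\ast,\,1-t_i^\ast\}>0$, then $t_i(\eta)\in(0,1)$ and $t(\eta)\in[0,1]^{|I|}$; moreover $\sum_{j\in I}a_jt_j(\eta)=\sum_{j\in I}a_jt_j^\ast-\eta\ge\bigl(d-\sum_{i\in J_1}a_i\bigr)-\delta-\eta$, using feasibility of $t^\ast$ at $\delta$, so $t(\eta)$ is feasible for the problem defining $\phi(\delta+\eta)$ (for both signs of $\eta$). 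Hence
\[
\phi(\delta+\eta)\;\ge\;-1+\textstyle\sum_{j\in I}c_j\bigl(1-\sqrt{t_j(\eta)}\bigr)\;=\;\phi(\delta)+c_i\Bigl(\sqrt{t_i^\ast}-\sqrt{t_i^\ast-\eta/a_i}\Bigr)\;=:\;\phi(\delta)+g(\eta).
\]

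To conclude, I would observe that $g$ is a \emph{convex} function of $\eta$ on $[-r,r]$: it is a constant minus $c_i$ times $\sqrt{t_i^\ast-\eta/a_i}$, and the square root of an affine function is concave. Since $g$ is also differentiable on $(-r,r)$ with $g(0)=0$ and $g'(0)=\tfrac{c_i}{2a_i\sqrt{t_i^\ast}}$, convexity yields $g(\eta)\ge g(0)+g'(0)\eta=\tfrac{c_i}{2a_i\sqrt{t_i^\ast}}\,\eta$ for all $|\eta|\le r$, so the lemma holds with $\xi:=\tfrac{c_i}{2a_i\sqrt{t_i^\ast}}$. I do not expect a genuine obstacle here: the only points needing care are the justification of passing to the product variables $t_i$ (legitimate precisely because the attainable set of products is $[0,1]$ and $h$ is a function of $\sqrt{x_iy_i}$), and the verification that the one-coordinate perturbation stays feasible and inside the box, which is what forces the stated choice of $r$.
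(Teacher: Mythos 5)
Your proof is correct and follows essentially the same route as the paper's: perturb the product $x_iy_i$ of the single fractional pair to restore feasibility at $\delta+\eta$, then use concavity of $\sqrt{\cdot}$ (equivalently, convexity of $\eta\mapsto -\sqrt{t_i^\ast-\eta/a_i}$) to extract a linear lower bound on $\phi(\delta+\eta)-\phi(\delta)$. If anything, your version is slightly more careful than the paper's, which perturbs $x_iy_i$ by $\eta$ rather than $\eta/a_i$ and thus glosses over the scaling by $a_i$ needed for feasibility when $a_i\neq 1$.
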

\begin{proof}
Let $\dot{\delta}$ be a point for which an optimal solution ($\dot{x},\dot{y}$) to the problem defining $\phi(\dot{\delta})$ is such that $\dot{x}_i\dot{y}_i\in(0,1)$. 
Define $r=\min\{\dot{x}_i\dot{y}_i,1-\dot{x}_i\dot{y}_i\}/2>0$.
Consider $\eta\in[-r,r]$ and construct $(x,y)$ so that $x_j=\dot{x}_j$, $y_j=\dot{y}_j$ for any $j\neq i$ and $x_iy_i = \dot{x}_i\dot{y}_i - \eta$. 
From the feasibility of $(\dot{x},\dot{y})$ for $\dot{\delta}$, we conclude that $(x,y)$ is a feasible solution to the optimization problem defining $\phi(\dot{\delta}+\eta)$. 
Therefore,
\begin{eqnarray*}
\phi(\dot{\delta}+\eta) - \phi(\dot{\delta}) &\ge&
\frac{\sqrt{a_i}}{\sqrt{a_i}-\sqrt{d_i}} \left(\sqrt{\dot{x}_i\dot{y}_i}-\sqrt{\dot{x}_i\dot{y}_i-\eta}\right) \\
&=& \frac{a_i+\sqrt{a_id_i}}{a_i-d_i} \left(\sqrt{\dot{x}_i\dot{y}_i}-\sqrt{\dot{x}_i\dot{y}_i-\eta}\right) \ge \frac{a_i+\sqrt{a_id_i}}{2\Delta\sqrt{\dot{x}_i\dot{y}_i}}\eta,
\end{eqnarray*}
where the last inequality holds {because $a_i-d_i=\Delta$} and because the concavity of the square root function over $\Real_+$ implies that $\sqrt{\dot{x}_i\dot{y}_i-\eta} \le \sqrt{\dot{x}_i\dot{y}_i}-\frac{1}{2\sqrt{\dot{x}_i\dot{y}_i}}\eta$.
\qed
\end{proof}

To obtain the tightest linear over-approximation of $\phi(\delta)$ for $\delta\in(0,\infty)$,
we next narrow down the set of points $\delta$ where function $\sfrac{\phi(\delta)}{\delta}$ can achieve a local maximum. 
\begin{proposition}
	\label{prop:minimal-slope}
Assume that $\dot{\delta}>0$ is a local maximizer of the function $\sfrac{\phi(\delta)}{\delta}$ and that $(\dot{x},\dot{y})$ is an optimal solution to the problem defining $\phi(\dot{\delta})$. 
Then either
\begin{enumerate}[label={(\roman*)},align=left]
\item all $(\dot{x}_i,\dot{y}_i)$ pairs belong to $\{0,1\}^2$, or \label{minimal-slope:1}
\item there exists $r>0$ such that $\sfrac{\phi(\delta+\eta)}{(\delta+\eta)}=\sfrac{\phi(\delta)}{\delta}$ for all $\eta \in (-r,r)$.
\label{minimal-slope:2}
\end{enumerate}
\end{proposition}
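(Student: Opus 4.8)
\textbf{Proof plan for Proposition~\ref{prop:minimal-slope}.}

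The plan is to argue by contradiction and exploit the local convexity established in Lemma~\ref{lm:fractional-concave}. Suppose $\dot\delta > 0$ is a local maximizer of $\sfrac{\phi(\delta)}{\delta}$ with optimal solution $(\dot x,\dot y)$ for which some pair $(\dot x_i,\dot y_i) \notin \{0,1\}^2$, and suppose furthermore that alternative~\ref{minimal-slope:2} fails, i.e.\ $\sfrac{\phi(\delta)}{\delta}$ is not locally constant at $\dot\delta$. I would first reduce to the case $\dot x_i\dot y_i \in (0,1)$: the only pairs with $(\dot x_i,\dot y_i)\notin\{0,1\}^2$ but $\dot x_i\dot y_i\in\{0,1\}$ are those with product $0$ (one coordinate zero, the other in $(0,1)$) or product $1$ (impossible unless both are $1$); in the product-zero case the term $\sqrt{x_iy_i}-1 = -1$ contributes a constant to the seed inequality and the coordinate can be perturbed without affecting feasibility or objective, so such a pair can be ignored when locating maximizers, and we may assume an optimal solution has every non-$\{0,1\}^2$ pair satisfying $\dot x_i\dot y_i\in(0,1)$. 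Then Lemma~\ref{lm:fractional-concave} applies: there exist $r>0$ and $\xi$ with $\phi(\dot\delta+\eta)\ge \phi(\dot\delta)+\xi\eta$ for all $\eta\in[-r,r]$.

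Next I would use the local-maximizer hypothesis on $g(\delta):=\sfrac{\phi(\delta)}{\delta}$ together with the affine lower bound on $\phi$. For $\eta$ small, $g(\dot\delta+\eta) \ge \dfrac{\phi(\dot\delta)+\xi\eta}{\dot\delta+\eta}$. The right-hand side is a differentiable function of $\eta$ at $\eta=0$ whose derivative there equals $\dfrac{\xi\dot\delta - \phi(\dot\delta)}{\dot\delta^2}$. If this derivative is nonzero, then for $\eta$ of the appropriate sign we get $g(\dot\delta+\eta) > g(\dot\delta)$, contradicting that $\dot\delta$ is a local maximizer. Hence we must have $\xi\dot\delta = \phi(\dot\delta)$, i.e.\ the supporting line from Lemma~\ref{lm:fractional-concave} passes through the origin with slope exactly $g(\dot\delta)$. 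This means $\phi(\dot\delta+\eta) \ge g(\dot\delta)(\dot\delta+\eta)$ for $\eta\in[-r,r]$, i.e.\ $g(\dot\delta+\eta)\ge g(\dot\delta)$ on that interval; combined with $\dot\delta$ being a \emph{local maximizer} of $g$, we conclude $g(\dot\delta+\eta) = g(\dot\delta)$ for all $\eta\in[-r,r]$ (possibly shrinking $r$), which is precisely alternative~\ref{minimal-slope:2}. This yields the dichotomy.

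The step I expect to be the main obstacle is the clean reduction at the start: making rigorous the claim that, when searching for local maximizers of $\sfrac{\phi(\delta)}{\delta}$, one may assume the chosen optimal solution $(\dot x,\dot y)$ has \emph{some} pair with product strictly inside $(0,1)$ whenever not all pairs lie in $\{0,1\}^2$ — and handling the bookkeeping of product-zero pairs (where a coordinate lies in $(0,1)$ but the product is $0$) so that Lemma~\ref{lm:fractional-concave} can actually be invoked. One must check that replacing such a solution by one in which the free coordinate is pushed to a bound does not change $\phi(\dot\delta)$ and does not destroy optimality, so that either we land in case~\ref{minimal-slope:1} directly or we genuinely have a product in $(0,1)$. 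The rest is a short calculus argument on the one-variable function $g$, using only the affine minorant of $\phi$ near $\dot\delta$ and the definition of local maximizer.
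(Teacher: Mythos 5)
Your proof is correct and follows essentially the same route as the paper: invoke Lemma~\ref{lm:fractional-concave} to obtain the affine minorant $\phi(\dot\delta)+\xi\eta$, use the local-maximizer hypothesis to force $\xi=\phi(\dot\delta)/\dot\delta$ (the paper does this by a two-case comparison of $\xi$ with $\phi(\dot\delta)/\dot\delta$ rather than your derivative computation, but it is the same argument), and then conclude local constancy of $\phi(\delta)/\delta$. Your opening reduction, handling pairs with $\dot x_i\dot y_i=0$ but $(\dot x_i,\dot y_i)\notin\{0,1\}^2$, addresses a point the paper silently elides (it treats the negation of alternative (i) as the existence of a product strictly in $(0,1)$), so that extra care is a refinement rather than a deviation.
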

\begin{proof}
Assume that \ref{minimal-slope:1} does not hold, \textit{i.e.}, there exists $i \in [n]$ for which $\dot{x}_i\dot{y}_i \in (0,1)$. 
We show that \ref{minimal-slope:2} holds.  
From Lemma~\ref{lm:fractional-concave}, there exists $\xi$ and $r>0$ such that $\phi(\dot{\delta}+\eta)\geq \phi(\dot{\delta})+\xi \eta$ for $\eta\in(-r, r)$. Without loss of generality, we assume $r<\dot{\delta}$.
We consider two cases. 
Assume first that $\xi\geq \sfrac{\phi(\dot{\delta})}{\dot{\delta}}$.
For any $\eta\in(0,r)$ we have 
	$\phi(\dot{\delta}+\eta)\geq\phi(\dot{\delta})+\xi\eta \geq \frac{\phi(\dot{\delta})}{\dot{\delta}}(\dot{\delta}+\eta)$
	or equivalently $\phi(\dot{\delta}+\eta)/(\dot{\delta}+\eta)\geq \phi(\dot{\delta})/\dot{\delta}$.
Assume second that $\xi\leq \sfrac{\phi(\dot{\delta})}{\dot{\delta}}$. For any $\eta\in(-r,0)$ we have 
	$\phi(\dot{\delta}+\eta)\geq\phi(\dot{\delta})+\xi\eta \geq \frac{\phi(\dot{\delta})}{\dot{\delta}}(\dot{\delta}+\eta)$
	or equivalently $\phi(\dot{\delta}+\eta)/(\dot{\delta}+\eta)\geq \phi(\dot{\delta})/\dot{\delta}$.
From analyzing these cases, we see that $\dot{\delta}$ can be a local maximum only if $\eta = \phi(\dot{\delta})/\dot{\delta}$ and all points in $(\delta-r, \delta+r)$ are also local maxima. 
\qed
\end{proof}

We now derive a linear over-approximation to the function $\phi(\delta)$ for $\delta \ge 0$.

\begin{lemma}\label{lm:boundpos}
Define $l_+:=\frac{\sqrt{a_{\i0}}+\sqrt{d_{\i0}}}{\Delta \sqrt{d_{\i0}}}$
if $I^> \neq \emptyset$
and $l_+:=\frac{1}{\Delta}$ otherwise.
Then $\phi(\delta) \le l_+ \delta$ for $\delta \ge 0$.
\end{lemma}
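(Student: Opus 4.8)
The plan is to bound the lifting function $\phi(\delta)$ for $\delta\ge 0$ by its linear over-approximation through the origin, exploiting that $\phi(0)=0$ (since the seed inequality is valid for the restriction, the optimal value at $\delta=0$ is nonpositive, and in fact it can be shown to equal $0$) and that the tightest such linear bound has slope $\sup_{\delta>0}\phi(\delta)/\delta$. The key is to show this supremum is attained, is finite, and equals the claimed value of $l_+$. First I would invoke Proposition~\ref{prop:minimal-slope}: any local maximizer $\dot\delta>0$ of $\phi(\delta)/\delta$ either has an optimal solution $(\dot x,\dot y)$ with all pairs $(\dot x_i,\dot y_i)\in\{0,1\}^2$, or lies on a plateau of $\phi(\delta)/\delta$. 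In the plateau case one can slide along the plateau until reaching its endpoint, where (by closedness/continuity of $\phi$, which follows from compactness of the feasible region in the definition of $\phi$) an optimal solution with all integral $(x_i,y_i)$ pairs is again attained. So without loss of generality the supremum of $\phi(\delta)/\delta$ is achieved at some $\dot\delta$ with a fully integral optimal solution.

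Next I would enumerate the fully integral solutions. If $(\dot x,\dot y)$ has all pairs in $\{0,1\}^2$, let $T=\{i: \dot x_i\dot y_i=1\}$. Feasibility of this point for the constraint $\sum_{i\in I}a_ix_iy_i\ge d^\Lambda-\dot\delta$ (here with $I=[n]$, so $d^\Lambda=d$) forces $\dot\delta\ge d-\sum_{i\in T}a_i$, and the objective value is $r-h(\dot x,\dot y)$. Using the explicit form of the bilinear cover inequality as the seed — $h(x,y)=\sum_i \frac{\sqrt{a_i}}{\sqrt{a_i}-\sqrt{d_i}}(\sqrt{x_iy_i}-1)$ and $r=-1$ — one computes $r-h(\dot x,\dot y)=-1+\sum_{i\notin T}\frac{\sqrt{a_i}}{\sqrt{a_i}-\sqrt{d_i}}$. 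Since we want the largest ratio, and since decreasing $\dot\delta$ to its minimum feasible value $d-\sum_{i\in T}a_i=\sum_{i\notin T}a_i-\Delta$ only increases the ratio, we get the ratio $\bigl(-1+\sum_{i\notin T}\frac{\sqrt{a_i}}{\sqrt{a_i}-\sqrt{d_i}}\bigr)\big/\bigl(\sum_{i\notin T}a_i-\Delta\bigr)$ for each $T\subsetneq[n]$; when $T=[n]$ the numerator is $-1<0$, so such points do not contribute to the supremum of $\phi(\delta)/\delta$ over $\delta>0$ (and the minimal feasible $\dot\delta$ would be negative anyway). The remaining work is to show that among all complements $J:=[n]\setminus T$ with $J\ne\emptyset$, this ratio is maximized when $|J|=1$, say $J=\{i\}$, and among singletons is maximized at $i=\i0\in I^>$ (or, when $I^>=\emptyset$, equals $1/\Delta$). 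For a singleton $J=\{i\}$, the ratio is $\frac{1}{a_i-\Delta}\bigl(\frac{\sqrt{a_i}}{\sqrt{a_i}-\sqrt{d_i}}-1\bigr)=\frac{1}{d_i}\cdot\frac{\sqrt{d_i}}{\sqrt{a_i}-\sqrt{d_i}}=\frac{1}{\sqrt{d_i}(\sqrt{a_i}-\sqrt{d_i})}=\frac{\sqrt{a_i}+\sqrt{d_i}}{\Delta\sqrt{d_i}}$, using $a_i-d_i=\Delta$; this is exactly $l_+$ when $i=\i0$, and by Lemma~\ref{lm:decreasing} (applied with $u=a_i$, $\alpha=\Delta$, $\beta=$ appropriate) this singleton ratio is monotone in $a_i$ so that the minimum $a_i$ over $I^>$ — i.e. $a_{\i0}$ — gives the maximum ratio; when $d_i\le 0$, $\sqrt{d_i}$ is not real, $i\notin I^>$, and such singletons are excluded. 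For $d_i=0$ (i.e. $a_i=\Delta$) or $d_i<0$ the singleton point is infeasible at the minimal $\dot\delta<0$, so again does not contribute.

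The main obstacle I anticipate is the combinatorial comparison showing $|J|=1$ dominates: one must verify that merging two indices into a larger complement $J$ never increases $\bigl(\sum_{i\in J}\frac{\sqrt{a_i}}{\sqrt{a_i}-\sqrt{d_i}}-1\bigr)/\bigl(\sum_{i\in J}a_i-\Delta\bigr)$. A clean way is a mediant-type argument: if $J=J'\uplus\{i\}$ and the ratio for $J'$ is at most $l_+$ while the ratio for the singleton $\{i\}$ is at most $l_+$, one would like to conclude the ratio for $J$ is at most $l_+$ — but this requires care because the ``$-1$'' in the numerator is not additive. I would handle this by writing, for each $i$, $\frac{\sqrt{a_i}}{\sqrt{a_i}-\sqrt{d_i}}-1\le l_+(a_i-\Delta)=l_+ d_i$ term by term (this is exactly the singleton bound and holds for every $i\in[n]$, interpreting $d_i\le0$ cases by the fact that then $\frac{\sqrt{a_i}}{\sqrt{a_i}-\sqrt{d_i}}\le 1$ is false — so one instead uses that $I^>$ contains the relevant indices; the degenerate indices $i$ with $a_i=\Delta$ force $d_i=0$ and $\frac{\sqrt{a_i}}{\sqrt{a_i}-\sqrt{d_i}}=1$, contributing $0$ to both sides). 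Summing over $i\in J$ gives $\sum_{i\in J}\frac{\sqrt{a_i}}{\sqrt{a_i}-\sqrt{d_i}}-|J|\le l_+\sum_{i\in J}d_i$, and since $|J|\ge1$ and $\sum_{i\in J}d_i$ relates to $\sum_{i\in J}a_i-\Delta$ via $\sum_{i\in J}d_i=\sum_{i\in J}a_i-|J|\Delta\le \sum_{i\in J}a_i-\Delta$, a short manipulation (adding $|J|-1\ge 0$ to the left and using $\Delta>0$) yields $\sum_{i\in J}\frac{\sqrt{a_i}}{\sqrt{a_i}-\sqrt{d_i}}-1\le l_+(\sum_{i\in J}a_i-\Delta)$, i.e. the desired ratio bound. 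Combining: $\phi(\delta)/\delta\le l_+$ for all $\delta>0$ with equality approached as $\delta\to (\sum a_i - d_{\i0}\text{-type value})$, hence $\phi(\delta)\le l_+\delta$ for all $\delta\ge 0$, which is the claim. When $I^>=\emptyset$ every $a_i=\Delta$, every $d_i=0$, every singleton ratio equals $1/\Delta$, and the same argument gives $l_+=1/\Delta$.
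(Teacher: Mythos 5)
Your reduction to fully integral optimal solutions, the enumeration over subsets $J=[n]\setminus T$, and the term-by-term bound $\frac{\sqrt{a_i}}{\sqrt{a_i}-\sqrt{d_i}}-1\le l_+d_i$ combined with $l_+\Delta\ge 1$ are sound and essentially reproduce the second half of the paper's proof (the paper instead absorbs the $-1$ into the largest-index term of $S$ and bounds each coefficient via the monotonicity of $1+\sqrt{1+\Delta/(a-\Delta)}$, but the two bookkeepings are equivalent). The genuine gap is in your very first step: ``without loss of generality the supremum of $\phi(\delta)/\delta$ is achieved at some $\dot\delta$ with a fully integral optimal solution.'' Proposition~\ref{prop:minimal-slope} only classifies local maximizers with $\dot\delta>0$; it says nothing about the possibility that $\sup_{\delta>0}\phi(\delta)/\delta$ is not attained and instead equals $\limsup_{\delta\downarrow 0}\phi(\delta)/\delta$. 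This is a live possibility a priori: for $\delta$ in a right-neighborhood of $0$, feasibility forces $\sum_i a_ix_iy_i\ge d-\delta>\sum_{i\in K}a_i$ for every proper subset $K$, so every optimal solution has a fractional pair, there are no integral-optimal points in that neighborhood, and your plateau-sliding argument can only terminate at $\delta=0$, where the ratio is a $0/0$ form. The paper explicitly flags this (``since Proposition~\ref{prop:minimal-slope} holds only for $\delta>0$, we first prove the result in an interval that has $0$ as an end point'') and devotes the first half of its proof to it: for $\delta\in[0,\min\{a_{i_0}-\Delta,\Delta\}/2]$ (or $[0,\Delta/2]$ when $I^>=\emptyset$) it computes $\phi(\delta)$ in closed form from the extreme-point structure, obtaining expressions such as $\frac{\sqrt{a_j}-\sqrt{a_j-\delta}}{\sqrt{a_j}-\sqrt{a_j-\Delta}}$ and $\frac{\sqrt{a_j}-\sqrt{a_j-\Delta-\delta}}{\sqrt{a_j}-\sqrt{a_j-\Delta}}-1$, and uses their convexity in $\delta$ together with their values at the interval endpoints to conclude $\phi(\delta)\le l_+\delta$ there. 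Without this boundary analysis (or some other bound on the right derivative of $\phi$ at $0$), your argument is incomplete.

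A second, minor point: you hedge about indices with $d_i<0$, but for a minimal cover $d_i=a_i-\Delta\ge 0$ for every $i$, and indices with $d_i=0$ contribute $0$ to both sides of your term-by-term inequality, so no exclusion is needed.
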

\begin{proof}
The result holds trivially for $\delta = 0$ since $\phi(\delta)=0$. 
Our main tool to prove this result is Proposition~\ref{prop:minimal-slope} which will allow us to verify the value of $\phi(\delta)/\delta$ only for a finite set of values of $\delta$. 
However, since Proposition~\ref{prop:minimal-slope} holds only for $\delta>0$, we first prove the result in an interval that has $0$ as an end point.

	

As mentioned above, the first part of the proof investigates the function $\phi$ in a neighborhood of the point $\delta = 0$.
There are two cases to consider. 

For the first case, assume that $I^> \neq \emptyset$.
Consider $\delta \in [0, \min\{a_{\i0}-\Delta, \Delta\}/2]$.
Because the problem defining $\phi$ consists of maximizing a convex function, optimal solutions can be found at extreme points of the feasible region. 
It follows that there exists an optimal solution that is such that $x_i^*y_i^* \in \{0,1\}$ for all $i\in[n]\backslash\{j\}$ for some $j \in [n]$. 

Further, at most one index $k \in [n]\backslash\{j\}$ can be such that $x_k^*y_k^* = 0$ as otherwise $\sum_{i=1}^n a_i x_i^*y_i^* \leq \sum_{i = 1}^n a_i - 2\Delta = d-\Delta < d-\delta$ which would made this solution infeasible for the problem defining $\phi$. 

Also, if there exists $k$ with $x_k^*y_k^* = 0$, then $a_k=\Delta$. If not, $a_k\geq a_{\i0}$ and thus $\sum a_ix_i^*y_i^*\leq d +\Delta - a_{\i0} < d-\delta$, infeasible.
{Thus, $-\Delta-\delta = \sum_{i = 1}^n a_ix_i^*y_i^* - \sum_{i = 1}^n a_i = a_jx_j^*y_j^* - a_j - a_k$, \textit{i.e.}, $a_jx_j^*y_j^* = a_j-\delta$, and as} $\frac{\sqrt{a_k}}{\sqrt{a_k}-\sqrt{d_k}}= \frac{\sqrt{\Delta}}{\sqrt{\Delta}  - 0} = 1$, we obtain
$$
\phi(\delta) 
= \frac{\sqrt{a_j}}{\sqrt{a_j}-\sqrt{d_j}}(1-\sqrt{x_j^*y_j^*}) 
= \frac{\sqrt{a_j}-\sqrt{a_j-\delta}}{\sqrt{a_j}-\sqrt{a_j-\Delta}} 
\leq \frac{\sqrt{a_n}-\sqrt{a_n-\delta}}{\sqrt{a_n}-\sqrt{a_n-\Delta}}:= \eta(\delta),
$$
where the last step follows from Lemma~\ref{lm:decreasing}. 
{Note that $\eta$ is well-defined and convex on $[0,\Delta]$.}
Therefore, it is easy to verify that $\eta(\delta)\leq \delta/\Delta$ for $\delta\in[0,\Delta]$, and thus $\phi(\delta) \leq \delta/\Delta \leq l_+ \delta$ for $\delta \in [0, \min\{a_{\i0}-\Delta, \Delta\}/2]$.
    
	If there is no $k$ with $x^*_ky^*_k = 0$, we can verify $a_j>\Delta$ and $a_jx_j^*y_j^* = a_j-\Delta-\delta$. Thus,
	\begin{eqnarray*}
	\phi(\delta) &=& \frac{\sqrt{a_j}}{\sqrt{a_j}-\sqrt{d_j}}(1-\sqrt{x_j^*y_j^*})-1 \\
	&=& \frac{\sqrt{a_j}-\sqrt{a_j-\Delta-\delta}}{\sqrt{a_j}-\sqrt{a_j-\Delta}}-1
	\leq \frac{\sqrt{a_{\i0}}-\sqrt{a_{\i0}-\Delta - \delta}}{\sqrt{a_{\i0}}-\sqrt{a_{\i0}-\Delta}}-1:=\xi(\delta),
	\end{eqnarray*}
	where the last step follows Lemma~\ref{lm:decreasing}. 
	The function $\xi(\delta)$ is again convex. Therefore, it is easy to verify that $\xi(\delta)\leq \frac{\sqrt{a_{\i0}}+\sqrt{d_{\i0}}}{\Delta\sqrt{d_{\i0}}}\delta$ for $\delta\in[0, a_{\i0}-\Delta]$. Thus we obtain that $\phi(\delta) \leq l_+ \delta$ for $\delta \in [0, \min\{a_{\i0}-\Delta, \Delta\}/2]$.
	
	
	For the second case $I^>=\emptyset$, \textit{i.e.},  $a_1=\ldots=a_n=\Delta$. Note that in this case $n \geq2$. 
	Consider $\delta\in[0, \Delta/2]$. 
	Similar to above, there exists an optimal solution that is such that $x_i^*y_i^* \in \{0,1\}$ for all $i\in[n]\backslash\{j\}$ for some $j\in [n]$. 
	In addition, there exists exactly one index $k \in [n]\backslash \{j\}$ with $x_k^*y_k^* = 0$, or otherwise we obtain $a_j>\Delta$ a contradiction to $I^>=\emptyset$. 
	As $\frac{\sqrt{a_k}}{\sqrt{a_k}-\sqrt{d_k}}=1$ and $a_jx_j^*y_j^*=\Delta x_j^*y_j^*=\Delta-\delta$, we obtain
	$$
	\phi(\delta) = \frac{\sqrt{a_j}}{\sqrt{a_j}-\sqrt{d_j}}(1-\sqrt{x_j^*y_j^*}) = \frac{\sqrt{\Delta}-\sqrt{\Delta-\delta}}{\sqrt{\Delta}} \leq \frac{\delta}{\Delta} \leq l_+ \delta, \ \delta \in [0, \Delta/2].
	$$


The second part of the proof investigates the function $\phi$ away from the origin. 
As we are attempting to show that $\phi(\delta)/\delta$ bounded from above by $l_+$, it is sufficient to consider all local maximas of $\phi(\delta)/\delta$.
It follows from Proposition~\ref{prop:minimal-slope} that it is sufficient to verify the condition at 
values of $\delta$ such that $x_iy_i \in \{0,1\}$ for $i \in [n]$. 
(This is because, at other local maximas, the function $\phi(\delta)/\delta$ is locally constant and so it is sufficient to check at the end points of these ``constant intervals" where $x_iy_i \in \{0,1\}$.)
Any such local maximum $\delta$ is therefore such that there exists $S \subseteq [n]$ with $x_iy_i=0$ for $i \in S$ and $x_iy_i=1$ for $i \notin S$.  
We denote it as $\delta^S$. 
It is easily verified that $\delta^S = \sum_{i\in S}a_i - \Delta$. Let $S = \{i_1, i_2,\ldots,i_k\}$ such that $a_{i_1}\leq \ldots \leq a_{i_k}$, and we have
$$
\phi(\delta^S) = -1+\sum_{i\in S} \frac{a_i+\sqrt{a_id_i}}{\Delta} = \sum^{k-1}_{j=1}\frac{a_{i_j}+\sqrt{a_{i_j}d_{i_j}}}{\Delta} + \frac{d_{i_k}+\sqrt{a_{i_k}d_{i_k}}}{\Delta}.
$$
Consider two cases.    
On the one hand, if $a_{i_k}> \Delta$, then $I^> \neq \emptyset$ and $a_{\i0}\leq a_{i_k}$. Thus,
	\begin{align*}
	\phi(\delta^S) &=   \sum_{j=1}^{k-1} \frac{\sqrt{a_{i_j}}+\sqrt{d_{i_j}}}{\Delta\sqrt{a_{i_j}}} a_{i_j} +\frac{\sqrt{d_{i_k}}+\sqrt{a_{i_k}}}{\Delta\sqrt{d_{i_k}}}d_{i_k}\\
	&\leq  \sum_{j=1}^{k-1} \frac{\sqrt{a_{i_0}}+\sqrt{d_{i_0}}}{\Delta\sqrt{d_{i_0}}} a_{i_j} +\frac{\sqrt{d_{i_0}}+\sqrt{a_{i_0}}}{\Delta\sqrt{d_{i_0}}}d_{i_k}
	 = \frac{\sqrt{a_{i_0}}+\sqrt{d_{i_0}}}{\Delta \sqrt{d_{i_0}}}\delta^S = l_+\delta^S,
	\end{align*}
where {the inequality follows from the fact that
$
\frac{\sqrt{a}+\sqrt{a-\Delta}}{\sqrt{a-\Delta}} 
= 1+\sqrt{1+\frac{\Delta}{a-\Delta}}
$
is decreasing on $a$ for $a>\Delta$,} and the second last equality holds because $\delta^S = \sum^{k-1}_{j=1}a_{i_j} + d_{i_k}$.
	
On the other hand, if $a_{i_k}=\Delta$, then we have $d_{i_k}=d_{i_j}=0$ for any $i_j\in S$. 
Thus
\begin{align*}
\phi(\delta^S) &= \sum^{k-1}_{j=1}\frac{a_{i_j}+\sqrt{a_{i_j}d_{i_j}}}{\Delta} + \frac{d_{i_k}+\sqrt{a_{i_k}d_{i_k}}}{\Delta}
	=  \frac{1}{\Delta}\delta^S \leq l_+\delta^S.\tag*{\qed}
\end{align*}
\end{proof}

Next, we derive an over-approximation of $\phi(\delta)$ when $\delta \leq 0$. 
\begin{lemma}\label{lm:boundneg} 
Define $l_-:=\frac{1}{\Delta}$. 
For $\delta \leq 0$, we have
\begin{equation*}
\phi(\delta) = \left\{\begin{aligned} 
&-\infty & \delta &< -\Delta \\
&\frac{ \sqrt{a_n - \Delta} - \sqrt{a_n - \Delta - \delta} }{\sqrt{a_n} - \sqrt{a_n  - \Delta}}  &  - \Delta\leq \delta &\leq 0.
\end{aligned}\right .
\end{equation*}
Further, $\phi(\delta) \le l_- \delta$ for $\delta \in [-\Delta,0]$.

\end{lemma}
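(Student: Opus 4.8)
The plan is to compute $\phi(\delta)$ exactly for $\delta\le 0$ and then compare it termwise to $l_-\delta$. Recall that for the lifting function of~(\ref{eq:bilincoverineq}) we have $I=[n]$ and $J_0=J_1=\emptyset$, so, using $\sum_i a_i=d+\Delta$ and $d_i=a_i-\Delta$, the objective in the definition of $\phi(\delta)$ is $r-h(x,y)=-1+\sum_i\frac{\sqrt{a_i}}{\sqrt{a_i}-\sqrt{d_i}}(1-\sqrt{x_iy_i})$ and the constraint is $\sum_i a_ix_iy_i\ge d-\delta$; both depend on $(x,y)$ only through the products $\pi_i:=x_iy_i\in[0,1]$, each of which is freely attainable (take $x_i=y_i=\sqrt{\pi_i}$). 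Reparametrizing with $s_i:=1-\pi_i\in[0,1]$, the constraint becomes $\sum_i a_is_i\le\Delta+\delta$, so
$$\phi(\delta)=-1+\max\left\{\ \sum_{i=1}^n \frac{\sqrt{a_i}}{\sqrt{a_i}-\sqrt{d_i}}\left(1-\sqrt{1-s_i}\right)\ \Bigm|\ \sum_{i=1}^n a_is_i\le\Delta+\delta,\ s\in[0,1]^n\ \right\}.$$
Since $\delta\le 0$ forces $\Delta+\delta\le\Delta$ while each $a_i\ge\Delta$, the feasible set is empty whenever $\Delta+\delta<0$, which (with the convention $\max\emptyset=-\infty$) yields the first branch $\phi(\delta)=-\infty$ for $\delta<-\Delta$.

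For $-\Delta\le\delta\le 0$ I would argue by extreme points. The feasible set $P:=\{s\in[0,1]^n:\sum_i a_is_i\le\Delta+\delta\}$ is a polytope and the objective is a separable sum of convex nondecreasing functions of the $s_i$, so its maximum is attained at a vertex of $P$. Because $a_i\ge\Delta\ge\Delta+\delta$ for every $i$, a feasible vertex of $P$ cannot have a coordinate equal to $1$ except possibly the single coordinate carrying an active budget constraint; hence each vertex of $P$ is either the origin or has the form $s=\frac{\Delta+\delta}{a_i}e_i$ for some $i\in[n]$. Evaluating the objective at $s=\frac{\Delta+\delta}{a_i}e_i$ gives $\frac{\sqrt{a_i}-\sqrt{a_i-\Delta-\delta}}{\sqrt{a_i}-\sqrt{a_i-\Delta}}$, which is nonnegative and therefore dominates the value $0$ at the origin; thus $\phi(\delta)=-1+\max_i\frac{\sqrt{a_i}-\sqrt{a_i-\Delta-\delta}}{\sqrt{a_i}-\sqrt{a_i-\Delta}}$. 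Lemma~\ref{lm:decreasing}, applied with $\alpha=\Delta+\delta\le\beta=\Delta$ (the boundary case $\delta=-\Delta$ being trivial, all ratios being $0$ there), shows this ratio is nondecreasing in $a_i$, so by Assumption~\ref{assp:2} the maximum is attained at $a_n$. Rewriting $-1+\frac{\sqrt{a_n}-\sqrt{a_n-\Delta-\delta}}{\sqrt{a_n}-\sqrt{a_n-\Delta}}=\frac{\sqrt{a_n-\Delta}-\sqrt{a_n-\Delta-\delta}}{\sqrt{a_n}-\sqrt{a_n-\Delta}}$ gives the claimed closed form.

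For the inequality $\phi(\delta)\le l_-\delta$ on $[-\Delta,0]$, set $g(\delta):=\sqrt{d_n}-\sqrt{d_n-\delta}$, so that $\phi(\delta)=g(\delta)/(\sqrt{a_n}-\sqrt{d_n})$ with $\sqrt{a_n}-\sqrt{d_n}>0$. Since $\delta\mapsto-\sqrt{d_n-\delta}$ is convex, $g$ is convex on $[-\Delta,0]$ and $g(0)=0$; hence $g$ lies below its chord through $(-\Delta,g(-\Delta))$ and $(0,0)$, i.e. $g(\delta)\le\frac{-\delta}{\Delta}g(-\Delta)=\frac{\delta}{\Delta}(\sqrt{a_n}-\sqrt{d_n})$ for $\delta\in[-\Delta,0]$. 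Dividing by $\sqrt{a_n}-\sqrt{d_n}$ yields $\phi(\delta)\le\delta/\Delta=l_-\delta$.

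I expect the main obstacle to be the extreme-point reduction: the original feasible set in $(x,y)$-space is not a polytope, so the vertex argument only becomes legitimate after passing to the product variables $\pi_i$ (alternatively one could invoke the extreme-point characterization of bipartite bilinear sets from~\cite{dey2019new} together with the convexity of $-h$ in $(x,y)$, but the product reparametrization is cleaner here). One must also handle the degenerate subcase $a_n=\Delta$, i.e. $d_n=0$ (which by Assumption~\ref{assp:2} forces all $a_i=\Delta$ and $n\ge 2$): there $g$ fails to be differentiable at $0$, but it is still convex and continuous on $[-\Delta,0]$, so the chord bound and all preceding steps remain valid.
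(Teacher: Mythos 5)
Your proof is correct and follows essentially the same route as the paper: reduce to extreme points where all but one product $x_iy_i$ equals $1$, use Lemma~\ref{lm:decreasing} to identify $a_n$ as the maximizer, and bound the resulting convex function by its chord through $(-\Delta,-1)$ and $(0,0)$. The only (cosmetic) difference is that you justify the extreme-point structure via a self-contained vertex enumeration of the polytope in the product variables $s_i=1-x_iy_i$, whereas the paper invokes the extreme-point characterization of bipartite bilinear sets from~\cite{dey2019new}.
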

\begin{proof}
When $\delta < -\Delta$, $\phi(\delta) = -\infty$ as the right-hand-side of the problem defining $\phi$ is larger than $\sum_{i=1}^n a_i$. 
Consider therefore the case when $0\geq \delta \geq - \Delta$. 
There exists an optimal solution $(x^*,y^*)$ of the problem defining $\phi(\delta)$ that is such that $x^*_i = y^*_i = 1$ for all $i \in [n]\setminus \{j\}$ for some $j \in [n]$. 
Further, $a_jx^*_jy_j^* = a_j -\Delta-\delta$. 
We obtain
\begin{eqnarray*}\phi(\delta)
&=&  \textup{max}_j \left[\frac{\sqrt{a_j } - \sqrt{a_j - \Delta - \delta}}{\sqrt{a_j} - \sqrt{a_j - \Delta}}\right]-1 
= \frac{\sqrt{a_n } - \sqrt{a_n - \Delta - \delta}}{\sqrt{a_n} - \sqrt{a_n - \Delta}}-1,
\end{eqnarray*}
where the last step follows from Lemma~\ref{lm:decreasing}.
Finally, observe that $\phi(\delta)$ is convex in $\delta$.  
Therefore, by taking a linear inequality tight at $\delta = 0$ and $\delta = -\Delta$,  we obtain that $\phi(\delta) \leq \delta/\Delta = l_- \delta$ since $\phi(0)=0$ and $\phi(-\Delta)=-1$. 
\qed
\end{proof}

By combining Lemmas~\ref{lm:boundpos} and \ref{lm:boundneg}, we obtain the following over-approximation of $\phi$:
\begin{eqnarray*}
\phi(\delta) \leq \tilde{\psi}(\delta):=\left\{ 
\begin{array}{llrclcl}  
-\infty &\ & &&\delta&\leq& -\Delta \\
l_- \delta &\ & -\Delta &\leq &\delta &\leq& 0 \\
l_+ \delta &\ & 0 &\leq &\delta.&& 
\end{array}\right.
\end{eqnarray*}
Note that the function $\tilde{\psi}$ is not subadditive.
Lemma~\ref{lm:subadditive} describes a subadditive function that upper bounds $\tilde{\psi}$, thus giving a subadditive upper bound of $\phi$.

\begin{lemma}\label{lm:subadditive}
{It holds that $l_+ \ge l_->0$.} 
Further, the function
\begin{equation*}
\psi(\delta) = \left\{ 
\begin{aligned}  
&l_+(\delta+\Delta) - l_-\Delta & \delta&\leq -\Delta \\
&l_- \delta & -\Delta \leq \delta &\leq 0 \\
&l_+ \delta &  0\leq \delta&
\end{aligned}
\right.
\end{equation*}
is subadditive. 
\end{lemma}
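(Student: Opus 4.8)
The claim $l_+\ge l_->0$ is immediate from the definitions. Since $\Delta>0$ (see the discussion following Notation~\ref{not:1}), we have $l_-=1/\Delta>0$, and if $I^>=\emptyset$ then $l_+=1/\Delta=l_-$. If $I^>\neq\emptyset$, then $i_0\in I^>$ gives $a_{i_0}>\Delta$, so, using $d_i=a_i-\Delta$, we get $d_{i_0}=a_{i_0}-\Delta>0$ and
$$l_+=\frac{\sqrt{a_{i_0}}+\sqrt{d_{i_0}}}{\Delta\sqrt{d_{i_0}}}=\frac{1}{\Delta}\left(1+\sqrt{\frac{a_{i_0}}{d_{i_0}}}\right)\ge\frac{1}{\Delta}=l_->0.$$
In particular $c:=(l_+-l_-)\Delta\ge0$.

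The key step is to peel off the linear part of $\psi$. I would verify, piece by piece, the identity
$$\psi(\delta)=l_+\delta+c\,\rho(\delta),\qquad\text{where}\qquad\rho(\delta):=\max\{\,0,\ \min\{1,\ -\delta/\Delta\}\,\}.$$
Indeed, for $\delta\ge0$ one has $\rho(\delta)=0$; for $-\Delta\le\delta\le0$ one has $\rho(\delta)=-\delta/\Delta$, so that $l_+\delta+c\rho(\delta)=l_+\delta-(l_+-l_-)\delta=l_-\delta$; and for $\delta\le-\Delta$ one has $\rho(\delta)=1$, so that $l_+\delta+c\rho(\delta)=l_+\delta+(l_+-l_-)\Delta=l_+(\delta+\Delta)-l_-\Delta$. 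This matches the three cases defining $\psi$, and in passing gives $\psi(0)=0$.

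Because $\delta\mapsto l_+\delta$ is additive, for all $\delta_1,\delta_2\in\Real$ we have $\psi(\delta_1)+\psi(\delta_2)-\psi(\delta_1+\delta_2)=c\bigl(\rho(\delta_1)+\rho(\delta_2)-\rho(\delta_1+\delta_2)\bigr)$, and since $c\ge0$ it suffices to show $\rho$ is subadditive. Substituting $u_i=-\delta_i/\Delta$, this is equivalent to showing that the clamp function $\bar\rho(u):=\max\{0,\min\{1,u\}\}$ satisfies $\bar\rho(u_1)+\bar\rho(u_2)\ge\bar\rho(u_1+u_2)$ for all $u_1,u_2$. If $\bar\rho(u_1)+\bar\rho(u_2)\ge1$, this is clear because $\bar\rho\le1$ everywhere. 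Otherwise each $\bar\rho(u_i)<1$, which forces $u_i<1$ and hence $\bar\rho(u_i)=\max\{0,u_i\}=u_i^+$; then $\bar\rho(u_1)+\bar\rho(u_2)=u_1^++u_2^+\ge(u_1+u_2)^+\ge\bar\rho(u_1+u_2)$, using subadditivity of the positive part and the elementary bound $\bar\rho(v)\le v^+$ valid for every $v$.

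I do not expect a genuine obstacle here: essentially all the content is in spotting the decomposition $\psi(\delta)=l_+\delta+c\,\rho(\delta)$, after which the linear term drops out of every subadditivity test and one is reduced to the routine subadditivity of a clamp function. The only care needed is the small case split in the last step and the piecewise verification of the identity; the degenerate case $c=0$ (equivalently $l_+=l_-$) is trivial since then $\psi$ is linear.
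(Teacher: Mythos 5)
Your proof is correct, but it takes a genuinely different route from the paper's. The paper proves subadditivity by brute-force case analysis: it introduces the auxiliary two-slope function $\mathring{\psi}(\delta)$ (equal to $l_-\delta$ for $\delta\le 0$ and $l_+\delta$ for $\delta\ge 0$), notes that $\mathring{\psi}\ge\psi$ with equality on $[-\Delta,\infty)$ and that $\mathring{\psi}$ is subadditive because $l_+\ge l_->0$, and then handles the remaining configurations (where one of $u$, $v$, $u+v$ lies in $(-\infty,-\Delta]$) by splitting on the sign and range of $u+v$ with $u\ge v$, using the representation $\psi(\delta)=\min\{l_+(\delta+\Delta)-l_-\Delta,\,l_-\delta\}\ge l_+\delta$ for $\delta\le 0$. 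You instead peel off the linear part via the decomposition $\psi(\delta)=l_+\delta+c\,\rho(\delta)$ with $c=(l_+-l_-)\Delta\ge 0$, which kills the linear term in every subadditivity test and reduces the whole lemma to the subadditivity of the clamp function $\bar\rho(u)=\max\{0,\min\{1,u\}\}$ -- a short, self-contained fact. (Note that since $l_-\Delta=1$, your formula for $\psi$ on $\delta\le-\Delta$ agrees with the $l_+(\delta+\Delta)-1$ appearing in Theorem~\ref{thm:upperbound}.) Your argument is shorter, avoids the case explosion, and makes transparent exactly where $l_+\ge l_-$ and $\Delta>0$ are used; the paper's argument is more pedestrian but requires no clever identity to be spotted. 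Both are complete and correct.
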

\begin{proof}
Define $\mathring{\psi}(\delta):=l_- \delta$ when $\delta \le 0$ and $\mathring{\psi}(\delta):=l_+ \delta$ when $\delta \ge 0$.
Function $\mathring{\psi}(\delta)$ satisfies $\mathring{\psi}\geq \psi$ and is subadditive since it is straightforward to verify that $l_+ \ge l_- >0$. 
Thus, for $u,v$, such that $u,v, u + v \in [-\Delta, +\infty)$, we already have that $\psi(u)+\psi(v)\geq \psi(u+v)$. 
It remains to consider the cases where at least one of $u$, $v$ or $u+v$ belongs to $(-\infty,-\Delta]$.
We do so by considering the possible values of $u+v$ and by assuming without loss of generality that $u\geq v$.
We use the fact that for $\delta\leq 0$, $\psi(\delta)=\min\{l_+(\delta+\Delta)-l_-\Delta, l_-\delta\}\geq l_+\delta$.
There are three cases to consider. 
First assume that  $u + v \geq 0$. 
In this case,
$\psi(u) + \psi(v) - \psi(u + v)  \geq  l_+ u + l_+v - l_+(u+v) = 0$.
Second assume that $-\Delta \leq u + v \leq 0$. 
In this case, $v \leq - \Delta$ and $u\geq 0$ so that
$\psi(u) + \psi(v) - \psi(u + v)  
=  l_+ u + l_+(v+\Delta)-l_- \Delta - l_-(u+v) 
= (l_+-l_-)(u+v+\Delta)  \geq  0.$
Third assume that $u + v \leq -\Delta$. 
There are two subcases. 
If $v \leq -\Delta$, we have
$
\psi(u) + (\psi(v) - \psi(u + v))  \geq   l_+ u + (l_+(v+\Delta) - l_+(u+v+\Delta))=0.$
If $v \geq -\Delta$, then $0\geq u\geq v\geq -\Delta$. 
Therefore
$\psi(u) + \psi(v) - \psi(u + v)  \geq l_-u + l_- v -  l_-(u+v) = 0.$
\qed
\end{proof}

\begin{proof}[Theorem~\ref{thm:upperbound}]
Combining Lemmas~\ref{lm:boundpos}, ~\ref{lm:boundneg}, and~\ref{lm:subadditive} yields Theorem~\ref{thm:upperbound}.
\end{proof}

\section{Proof of Theorem~\ref{thm:lifted}}
\label{section:lifted}


\begin{proof}[Theorem~\ref{thm:lifted}]
Following Theorems~\ref{thm:valid} and \ref{thm:upperbound}, it is sufficient to show that $\gamma_i(x,y)\geq \psi(a_ixy)$ for $i\in J_0$ and $\gamma_i(x,y)\geq \psi(a_i(xy-1))$ for $i\in J_1$, where $\psi$ is the subadditive over-approximation of $\phi$ derived in Theorem~\ref{thm:upperbound}. 
We discuss the possible cases.
\begin{enumerate}[label={(\roman*)}]
\item 
Assume $i \in J_0^+$. 
We must find $\gamma_i(x,y)\geq \psi(a_ixy) = l_+a_ixy$ for $(x,y) \in [0,1]^2$ where the equality holds as $a_i>0$. 
As $\min\{x, y\}\geq xy$ is the best concave upper bound for $(x, y)\in [0,1]^2$, we choose $\gamma_i(x,y) = l_+a_i\min\{x,y\}$.

\item 
Assume $i \in J_1^-$.
We must find $\gamma_i(x,y) \geq \psi(a_i(xy-1)) = l_+(a_ixy-a_i) = l_+a_i(xy-1)$  for $(x,y) \in [0,1]^2$ where the equality holds since $a_i<0$. 
As $\max\{x+y-1, 0\}\leq xy$ is the best convex lower bound for $(x, y) \in [0,1]^2$, we choose
$\gamma_i(x,y) = l_+a_i (\max\{x+y-1, 0\}-1)=-l_+a_i\min\{2-x-y,1\}$.

\item 
\label{lift3}
Assume $i \in J_0^-$.
We must find $\gamma_i(x,y)\geq \psi(a_ixy)$  for $(x,y) \in [0,1]^2$. 
As $a_i<0$,  $\psi(a_ixy) = \min\{l_-a_ixy, l_+a_ixy + l_+\Delta-1\}\leq 
\min\{l_-a_i(x+y-1), l_+a_i(x+y-1)+l_+\Delta-1, 0\} := \gamma _i(x,y)$.


\item 
Assume $i \in J_1^+$.
In this case, we must find $\gamma_i(x,y)\geq \psi(a_ixy-a_i)$ 
for $(x,y) \in [0,1]^2$. 
Since $a_i>0$,  $\psi(a_ixy-a_i) = \min\{l_-a_i(xy-1), l_+a_i(xy-1)+l_+\Delta-1\}.$ 
Similar to \ref{lift3}, we have $
\psi(a_ixy-a_i)\leq l_-a_i(\min\{x, y\}-1) =: \tilde{h}(x,y)$, and $
\psi(a_ixy-a_i)\leq l_+a_i(\min\{x, y\}-1)+l_+\Delta-1=: \tilde{g}(x,y)$.
Thus, $\gamma(x,y) = \min\{\tilde{h}(x,y), \tilde{g}(x,y)\}$ is a concave upper bound of $\psi$. 

Next we improve this upper bound when $a_i\geq a_{\i0} > \Delta$. 
As $g$ and $h$ (defined in Theorem~\ref{thm:lifted}) are concave, it remains to show the following:
\begin{claim} 
For $a_i \geq a_{\i0} > \Delta$, $\min\{g(x,y),h(x,y)\} \geq \psi(a_ixy -a_i)$ for $(x,y) \in [0, 1]^2$.
\end{claim}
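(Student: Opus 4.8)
The plan is to reduce the claim to a one‑variable inequality in $t := \sqrt{xy}\in[0,1]$ and then to a short case split. Both $g_i$ and $h_i$ depend on $(x,y)$ only through $t$, and $\psi(a_ixy-a_i)=\psi(a_i(t^2-1))$ with $a_i(t^2-1)\le 0$; writing $d_i=a_i-\Delta$, the argument $a_i(t^2-1)$ lies in the branch $[-\Delta,0]$ of $\psi$ exactly when $a_it^2\ge d_i$, i.e.\ $t\ge\sqrt{d_i/a_i}$, and there $\psi = a_i(t^2-1)/\Delta$; for $t\le\sqrt{d_i/a_i}$ we instead have $\psi = l_+(a_it^2-d_i)-1$. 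So it suffices to prove $g_i(t)\ge\psi$ and $h_i(t)\ge\psi$ on each of the two subintervals $[0,\sqrt{d_i/a_i}]$ and $[\sqrt{d_i/a_i},1]$.

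The one algebraic fact I would isolate up front is that $l_+\sqrt{d_i}\,(\sqrt{a_i}-\sqrt{d_i})\ge 1$ whenever $a_i\ge a_{i_0}>\Delta$. Using $\Delta=a_{i_0}-d_{i_0}=(\sqrt{a_{i_0}}-\sqrt{d_{i_0}})(\sqrt{a_{i_0}}+\sqrt{d_{i_0}})$ one rewrites $l_+=\frac{\sqrt{a_{i_0}}+\sqrt{d_{i_0}}}{\Delta\sqrt{d_{i_0}}}$ as $\frac{1}{\sqrt{d_{i_0}}(\sqrt{a_{i_0}}-\sqrt{d_{i_0}})}$, so the inequality becomes $\rho(a_i)\ge\rho(a_{i_0})$ for $\rho(a):=\sqrt{a(a-\Delta)}-(a-\Delta)$. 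Since $\rho'(a)\ge 0\iff(2a-\Delta)^2\ge 4a(a-\Delta)$, which is just $\Delta^2\ge 0$, $\rho$ is nondecreasing on $(\Delta,\infty)$, and $a_i\ge a_{i_0}$ finishes it. This is precisely where the hypothesis $a_i\ge a_{i_0}$ enters.

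Then I carry out the four sub‑bounds, each of which collapses to something already in hand. On $t\le\sqrt{d_i/a_i}$: cancelling the common $-1$ and dividing by $l_+>0$ turns $g_i\ge\psi$ into $\sqrt{a_id_i}\,t\ge a_it^2$, i.e.\ into $t\le\sqrt{d_i/a_i}$ itself; for $h_i\ge\psi$, note $h_i$ is affine in $t$ while $\psi$ is convex in $t$, so $\psi-h_i$ is convex and it suffices to check the endpoints — at $t=\sqrt{d_i/a_i}$ both sides equal $-1$, and at $t=0$ the inequality is exactly $l_+\sqrt{d_i}(\sqrt{a_i}-\sqrt{d_i})\ge 1$. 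On $t\ge\sqrt{d_i/a_i}$: writing $\Delta=(\sqrt{a_i}-\sqrt{d_i})(\sqrt{a_i}+\sqrt{d_i})$ and dividing $h_i\ge\psi$ through by the nonpositive factor $(t-1)/(\sqrt{a_i}-\sqrt{d_i})$ (which flips the inequality) reduces it to $\sqrt{a_id_i}\le a_it$, i.e.\ $t\ge\sqrt{d_i/a_i}$; for $g_i\ge\psi$, $g_i$ is affine and $\psi=a_i(t^2-1)/\Delta$ is convex, so $g_i-\psi$ is concave and endpoint‑checking suffices — at $t=\sqrt{d_i/a_i}$ both sides equal $-1$, and at $t=1$ the inequality is again $l_+\sqrt{d_i}(\sqrt{a_i}-\sqrt{d_i})\ge 1$. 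Combining the four gives $\min\{g_i,h_i\}\ge\psi(a_ixy-a_i)$ on $[0,1]^2$, as claimed.

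The only real difficulty is bookkeeping: keeping inequality directions straight when dividing by quantities such as $t-1$ that are nonpositive on $[0,1]$, and noticing that the affine‑versus‑convex structure of each bound in the single variable $t$ makes all interior verification automatic, so that everything rests on the four endpoint identities and the single monotonicity fact about $\rho$.
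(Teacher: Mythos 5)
Your proof is correct and follows essentially the same route as the paper's: split at the breakpoint $\sqrt{xy}=\sqrt{1-\Delta/a_i}$ of $\psi$, compare each of $g_i,h_i$ to each branch via affine-versus-convex endpoint checks, and reduce everything to the single inequality $l_+\geq \frac{1}{\sqrt{d_i}(\sqrt{a_i}-\sqrt{d_i})}$, which both you and the paper obtain from $a_i\geq a_{i_0}$ by a monotonicity argument (your $\rho(a)=\sqrt{a(a-\Delta)}-(a-\Delta)$ nondecreasing is equivalent to the paper's $1+\sqrt{1+\Delta/(a-\Delta)}$ decreasing). The only difference is organizational: you work uniformly in $t=\sqrt{xy}$ and isolate the key algebraic fact up front, whereas the paper switches between the $\sqrt{xy}$ and $xy$ parametrizations across sub-cases.
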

Observe that
\begin{eqnarray*}
\psi(a_ixy - a_i) = \left\{ 
\begin{array}{lll}  
l_+a_i((\sqrt{xy})^2-1) + l_+\Delta-1 \ &\textrm{if}& 0 \leq \sqrt{xy} \leq \sqrt{1 - \frac{\Delta}{a_i}}\\
l_-a_i ((\sqrt{xy})^2 - 1) &\textrm{if}& \sqrt{1 - \frac{\Delta}{a_i}} \leq \sqrt{xy} \leq 1.
\end{array} \right.
\end{eqnarray*}
Consider first the function $g_i(x,y)= \sqrt{a_i - \Delta}\sqrt{a_i}l_+\sqrt{xy} -l_+(a_i-\Delta) -1$:
\begin{itemize}
\item $\sqrt{xy} \in [0, \sqrt{1 - \frac{\Delta}{a_i}}]$:
$g_i(x,y) \geq -1+ (a_i(\sqrt{xy})^2 - a_i + \Delta) l_+  = \psi(a_ixy -a_i).$
\item $\sqrt{xy} \in [\sqrt{1 - \frac{\Delta}{a_i}}, 1]$: 
we simply prove
$
\hat{g}_i(t) :=   l_+\sqrt{a_i-\Delta}\sqrt{a_i}\sqrt{t} - l_+(a_i-\Delta)-1  \geq l_-a_i (t - 1)  =:f_i(t), 
$
for $t \in [1 - \Delta/a_i, 1]$.  
To this end, we verify:  
(i) $\hat{g}_i(1- \frac{\Delta}{a_i}) = f_i(1 - \frac{\Delta}{a_i})$ and 
(ii) $\hat{g}_i(1) \geq f_i(1)$. 
This is sufficient since $\hat{g}_i$ is a concave function and $f_i$ is a linear function. The proof of (i) is straightforward. 
To prove (ii) observe that $\hat{g}_i(1) = l_+\sqrt{a_i-\Delta}\sqrt{a_i}- l_+(a_i-\Delta)-1  \geq f_i(1) = l_-a_i (1 - 1)  = 0$ is equivalent to  verifying $l_+\geq \frac{1}{\sqrt{a_i -\Delta} (\sqrt{a_i} - \sqrt{a_i -\Delta}) }$ or equivalently $\frac{\sqrt{a_{\i0}} + \sqrt{a_{\i0} - \Delta}}{\sqrt{a_{\i0} - \Delta} }  \geq \frac{\sqrt{a_i} + \sqrt{a_i -\Delta}}{\sqrt{a_i -\Delta}  }$  which holds since $a_i\geq a_{\i0}$.
\end{itemize}

Consider second the function        
$h_i(x,y)= \frac{\sqrt{a_i}}{\sqrt{a_i} - \sqrt{d_i}}(\sqrt{xy} - 1)$:
\begin{itemize}
\item $\sqrt{xy} \in [\sqrt{1 - \frac{\Delta}{a_i}}, 1]$: 
by construction, $h_i(x,y) \geq l_-a_i(xy - 1)\geq \psi(a_ixy-a_i)$. 
\item $\sqrt{xy} \in [0, \sqrt{1 - \frac{\Delta}{a_i}}]$: 
first observe that $\hat{h}_i(t^*) = f_i(t^*)$ for $t^* =  \sqrt{1 - \frac{\Delta}{a_i}}$, where $f_i(t) := l_+a_i(t-1)+l_+\Delta-1 $. 
Since $h_i(x,y)$ is concave it is sufficient to verify that $\hat{h}_i(0) \geq f_i(0)$.
This condition holds as
$
\frac{\sqrt{a_i}}{\sqrt{a_i} - \sqrt{a_i - \Delta}} \leq 1 + (a_i - \Delta)l_+$ which is equivalent to  $l_+ \geq \frac{1}{\sqrt{a_i -\Delta} (\sqrt{a_i} - \sqrt{a_i -\Delta})}.
$
\qed
\end{itemize}

%
 
\end{enumerate}

\end{proof}

\bibliographystyle{plain}      
\bibliography{mybibliography}

\end{document}